\newtheorem{theorem}{Theorem}[section]
\newtheorem{lemma}[theorem]{Lemma}
\newtheorem{proposition}[theorem]{Proposition}
\newtheorem{corollary}[theorem]{Corollary}
\theoremstyle{definition}
\theoremstyle{remark}
\numberwithin{equation}{section}
\begin{document}

\title[1-dimensional centro-affine curvature flow]{Centro--affine curvature flows on centrally symmetric convex curves}
\author[M. N. Ivaki]{Mohammad N. Ivaki}
\address{Department of Mathematics and Statistics,
  Concordia University, Montreal, QC, Canada, H3G 1M8}
\curraddr{}
\email{mivaki@mathstat.concordia.ca}
\thanks{}
\subjclass[2010]{Primary 53C44, 53A04, 52A10, 53A15; Secondary 35K55}

\date{}

\dedicatory{}

\begin{abstract}
We consider two types of $p$-centro affine flows on smooth, centrally symmetric, closed convex planar curves, $p$-contracting, respectively, $p$-expanding. Here $p$ is an arbitrary real number greater than $1$. We show that, under any $p$-contracting flow, the evolving curves shrink to a point in finite time and the only homothetic solutions of the flow are ellipses centered at the origin. Furthermore, the normalized curves with enclosed area $\pi$ converge, in the Hausdorff metric, to the unit circle modulo $SL(2)$. As a $p$-expanding flow is, in a certain way, dual to a contracting one, we prove that, under any $p$-expanding flow, curves expand to infinity in finite time, while the only homothetic solutions of the flow are ellipses centered at the origin. If the curves are normalized as to enclose constant area $\pi$, they display the same asymptotic behavior as the first type flow and converge, in the Hausdorff metric, and up to $SL(2)$ transformations, to the unit circle. At the end, we present a new proof of $p$-affine isoperimetric inequality, $p\geq 1$, for smooth, centrally symmetric convex bodies in $\mathbb{R}^2$.
\end{abstract}

\maketitle
\section{Introduction}\label{intro}
The affine normal flow is a widely recognized
evolution equation for hypersurfaces in which each point moves with
velocity given by the affine normal vector. This evolution equation
is the simplest affine invariant flow in differential geometry and
it arises naturally if one considers families of $\delta$-convex
floating bodies of a convex body \cite{BA4}, \cite{S1}. On the
applicability aspect, the affine normal flow appears in image
processing as a fundamental smoothing tool \cite{ST2, ST3, ST1}. It also
provides a nice proof of the Santal\'{o} inequality for smooth convex
hypersurfaces and, respectively, for the
 classical affine isoperimetric inequality, both due to Andrews \cite{BA2}.
 The affine normal evolution has also been implicitly deployed by Stancu in \cite{S2,S1}
  for a breakthrough towards the homothety conjecture for convex floating bodies by Sch\"{u}tt-Werner \cite{SW}.
  In this paper, we consider an extension of the affine normal flow, namely the $p$-centro affine flow
  introduced by Stancu \cite{S}, and we investigate its behavior. The $p$-centro affine flows are natural
  generalizations of the affine normal flow in a way which will be explained below.

Let $K$ be a compact, centrally symmetric, strictly convex body, smoothly embedded in $\mathbb{R}^2$.
We denote the space of such convex bodies by $\mathcal{K}_{sym}$. Let
 $$x_K:\mathbb{S}^1\to\mathbb{R}^2,$$ be
 the Gauss parametrization of $\partial K$, the boundary of $K\in
 \mathcal{K}_{sym}$, where the origin of the plane is chosen to coincide with the center of
 symmetry of the body.
The support function of $\partial K$ is defined by
 $$s_{\partial K}(z):= \langle x_K(z), z \rangle,$$
 for each $z\in\mathbb{S}^1$.
 We denote the curvature of $\partial K$ by $\kappa$ and, furthermore, the radius of curvature of the curve
 $\partial K$ by $\mathfrak{r}$, both as functions on the unit circle. The latter is related to the support function by
 $$\mathfrak{r}[s](z):=\frac{\partial^2}{\partial \theta^2}s(z)+s(z),$$
 where $\theta$ is the angle parameter on $\mathbb{S}^1.$
 Let $K_0\in \mathcal{K}_{sym}$. We consider a family $\{K_t\}_t\in \mathcal{K}_{sym}$, and their  associated smooth
 embeddings $x:\mathbb{S}^1\times[0,T)\to \mathbb{R}^2$, which are evolving according  to the $p$-centro affine flow, namely,
 \begin{equation}\label{e: faree}
 \frac{\partial}{\partial t}x:=-\left(\frac{\kappa}{s^3}\right)^{\frac{p}{p+2}-\frac{1}{3}}\kappa^{\frac{1}{3}}\, z,~~
 x(\cdot,0)=x_{K_0}(\cdot),~~ x(\cdot ,t)=x_{K_t}(\cdot)
 \end{equation}
 for a fixed $p\geq 1$.

 The flow (\ref{e: faree}) without the assumption of symmetry was defined in \cite{S}, in all dimensions in the class of
 $C^2_{+}$ convex bodies having origin in their interiors, for the purpose of finding new global centro-affine invariants of
 smooth convex bodies. Stancu obtains many interesting isoperimetric type inequalities via short time existence of the flow.
 Furthermore, her $p$-flow approach led to a geometric interpretation of the $L_{\phi}$ surface area recently introduced by
 Ludwig and Reitzner \cite{Ludwig}. The case $p=1$, the well-known affine normal flow,  was already addressed by Andrews \cite{BA2} in
 arbitrary dimension, by Sapiro and Tannebaum \cite{ST4} for convex planar curves, and by Angenent, Sapiro and Tannebaum
 \cite{AST} for non-convex curves.  In \cite{ST4}, it was proved that the flow evolves any initial strictly convex curve,
 not necessarily symmetric, until it shrinks to an {\em elliptical} point. Andrews, in \cite{BA2},
 investigated completely this case for hypersurfaces of any dimension and showed that the normalized flow evolves
 any initial strictly convex hypersurface exponentially fast, in the $C^{\infty}$ topology, to an ellipsoid.
 He also proves, in \cite{BA4}, that any convex initial bounded open set shrinks to a point in finite time under the  affine
 normal flow. In \cite{AST}, the authors prove convergence to a point under the affine normal flow starting from any $C^{2}$
 planar curve, not necessarily convex, despite the fact that affine differential geometry in not defined
 for non-convex curves or hypersurfaces. In another  direction, interesting results for the affine normal
flow have been obtained in \cite{LT} by Loftin and Tsui regarding ancient solutions, and existence and regularity of solutions on non-compact strictly convex hypersurfaces. It is necessary pointing out that the case $p=1$, in contrast to the case $p>1$, is the only instance
when the flow (\ref{e: faree}) is no longer {\em anisotropic}. Moreover, the main difference between $p=1$  and the other cases
is that, for $p>1$ the flow is sensitive to the origin (as the name $p$-centro affine suggests)
 meaning that Euclidean translations of an initial curve will lead to different solutions, since translations affect the support function of the convex body which appears in (\ref{e: faree}). The translation invariancy of a flow is a main ingredient to prove
 the convergence to a point \cite{BA5, BA2, BA1, BA4}.  We overcome these issues, and other difficulties in the study of the asymptotic behavior
 of this flow, by restricting it to $\mathcal{K}_{sym}$ and implementing $p$-affine isoperimetric inequalities developed by
 Lutwak \cite{Lutwak}. This approach emphasizes the usefulness of the $p$-affine surface area and
 $p$-affine isoperimetric inequalities which have also been successfully employed by Lutwak and Oliker in \cite{LO} for
 obtaining regularity of the solutions to a generalization of Minkowski problem.
 See \cite{CL, LYZ1, LYZ2, LYZ3, LYZ4, LYZ5, TW1,TW2} for more applications of these invaluable tools.

 We will give  now a description of (\ref{e: faree}) in terms of $SL(2)$-invariant quantities.
  Although it relies on introducing new notations, it provides an alternate view in which (\ref{e: faree}) is naturally an
  affine invariant flow. Equation
   (\ref{e: faree}) is precisely
   $$\frac{\partial}{\partial t}x:=\kappa_0^{\frac{p}{p+2}}\, \mathfrak{n}_0,$$
 where $n_0$ denotes the centro-affine normal vector field along $\partial K$, and, in every direction, $\displaystyle \kappa_0=\frac{\kappa}{s^3}$ is the centro-affine curvature along the boundary. The  centro-affine normal vector field is, pointwise, a multiple of the affine normal vector field $\mathfrak{n}$ which is known to be transversal to the boundary of $K$, but not necessarily orthogonal to it. More precisely, $\mathfrak{n}_0=\kappa_0^{-\frac{1}{3}}\, \mathfrak{n}$. In this paper, we choose to work with the flow's definition as a time-dependent anisotropic flow by powers of the Euclidean curvature and we will resort to the affine differential setting only for a technical step in the study of the normalized evolution.

 Finally, note that the solution of (\ref{e: faree}) remains in $\mathcal{K}_{sym}$, as $s$ and $\kappa$ are {\em symmetric} in the sense
  $$\forall \theta:\ s(\theta+\pi)=s(\theta),~~\kappa(\theta+\pi)=\kappa(\theta).$$
 Here and thereafter, we identify $z=(\cos \theta, \sin \theta)$ with the normal angle $\theta$ itself. We will give a proof of the fact that $K_t\in\mathcal{K}_{sym}$ as long as the flow exists in Lemma \ref{lem: symmetry preserved}.

 We can rewrite the evolution equation (\ref{e: faree}) as a scalar parabolic equation for the support functions on the unit circle:
 $$ s:\mathbb{S}^1\times[0,T)\to \mathbb{R}^{+}$$
 \begin{equation}\label{e: asli}
\frac{\partial}{\partial
t}s=-s^{1-\frac{3p}{p+2}}\mathfrak{r}^{-\frac{p}{p+2}}, \
s(\cdot,0)=s_{\partial K} (\cdot),\  s(\cdot,t)=s_{\partial K_t}(\cdot),
\end{equation}
leading, in general, to an anisotropic planar evolution. As in \cite{BA1}, it can be shown that there is a one-to-one
correspondence between the solutions of (\ref{e: faree}) and those of (\ref{e:
asli}).

 The convex body $K^{\circ}$, dual to the convex body $K\in \mathcal{K}_{sym}$, is defined by
$$K^{\circ}=\{y\in\mathbb{R}^2; |\langle y,x\rangle|\leq 1~~\mbox{for every } x\in K\}.$$
In \cite{S}, the following expanding $p$-centro affine
flow  was defined in connection to (\ref{e: faree})
\begin{equation}\label{e: dual flow}
 \frac{\partial}{\partial t}x:=s\left(\frac{\kappa}{s^{3}}\right)^{-\frac{p}{p+2}}\, z,~~
 x(\cdot,0)=x_{K^{\circ}_0}(\cdot),~~ x(\cdot ,t)=x_{K^{\circ}_t}(\cdot).
\end{equation}
It is easy to check as
$K_t$, evolves according to (\ref{e: faree}), then $K^{\circ}_t$ evolves according to (\ref{e: dual flow}).
Equivalently, the support function of $\partial K^{\circ}_t$, $s_{\partial K^{\circ}_t}$, evolves according to
\begin{equation}\label{e:  dual asli}
\frac{\partial}{\partial
t}s=s^{1+\frac{3p}{p+2}}\mathfrak{r}^{\frac{p}{p+2}},\ \ p \geq 1
\end{equation}
with initial condition $s(\cdot,0)=s_{K^{\circ}_0}(\cdot),$
see Lemma \ref{lem: dual flow}.

At a point $p$ of $\partial K$, the centro-affine curvature mentioned earlier is inversely proportional to the square of the area of
the  centered osculating ellipse at $p$. The centro-affine curvature is thus constant along ellipses centered at the origin which are, therefore, evolving homothetically by (\ref{e: asli}), respectively (\ref{e:  dual asli}).
Coupled with the fact that these flows increase the product $Area (K) \cdot Area (K^{\circ})$ which
is known to reach the maximum  for ellipses centered at the origin (Santal\'o inequality) and the applications of $p$-flow stated above it was natural to investigate the asymptotic behavior of the flows which a priori suggests convergence to ellipses.
While this was the first objective of the paper, in the process we obtained  sharp affine isoperimetric type
inequalities. The latter is related to the $p$-affine surface area introduced by Lutwak in \cite{Lutwak} which has been the subject of intense research since then, see \cite{Ludwig} for a recent, outreaching work which motivates even the present work. Finally, to the best of our knowledge, this is the first study of an anisotropic curvature flow with time-dependent weight. We regard as weight, as well as anisotropic factor, a power of the support function of the evolving body.
In this paper we prove the following two theorems:
\begin{theorem}
Let $p>1$. Let $x_{K_0}:\mathbb{S}^1\to\mathbb{R}^2$ be a smooth, strictly convex embedding of $K_0\in\mathcal{K}_{sym}.$ Then there exists a unique solution $x:\mathbb{S}^1\times [0,T)\to\mathbb{R}^2$ of equation (\ref{e: faree}) with initial data $x_{K_0}$. The solution remains smooth and strictly convex on $[0,T)$ for a finite time $T$ and it converges to the origin of the plane. The rescaled curves given by the embeddings $\sqrt{\frac{\pi}{A_t}}x(\theta,t)$ converge in the Hausdorff metric to the unit circle modulo $SL(2).$
\end{theorem}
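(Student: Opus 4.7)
The plan divides into three steps: short-time existence with preservation of symmetry, finite-time extinction to the origin, and Hausdorff convergence of the rescaled curves modulo $SL(2)$. The bulk of the work lies in the third step, which requires replacing standard translation-invariant techniques---unavailable once $p>1$---by centro-affine invariants.

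For short-time existence I would work directly with the scalar equation (\ref{e: asli}) for the support function. So long as $s>0$ and $\mathfrak{r}=s_{\theta\theta}+s>0$, the right-hand side is smooth and strictly monotone in $s_{\theta\theta}$ with the sign that renders the equation uniformly parabolic, so standard fully nonlinear parabolic theory on $\mathbb{S}^{1}$ produces a unique smooth strictly convex solution on a maximal interval $[0,T)$. Since the right-hand side is invariant under $\theta\mapsto\theta+\pi$, uniqueness (the forthcoming Lemma on preservation of symmetry) forces $K_t$ to remain in $\mathcal{K}_{sym}$. For finite-time extinction I would integrate by parts on $\mathbb{S}^{1}$ to obtain $\frac{dA_t}{dt}=-\Omega_p(K_t)$, where $\Omega_p$ denotes the $p$-affine surface area, and then compare with origin-centered balls of radius $R$, which are homothetic solutions extinguishing in finite time via the ODE $\dot R=-R^{(2-3p)/(p+2)}$. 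Inscribing and circumscribing balls and applying a maximum principle to (\ref{e: asli}) bound $T$ from above, and central symmetry pins the vanishing point to the origin.

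The heart of the theorem is the asymptotic shape analysis of the rescaled embeddings $\tilde x(\cdot,t)=\sqrt{\pi/A_t}\,x(\cdot,t)$. The key monotone quantity is the Santal\'{o} product $A(K_t)\,A(K_t^\circ)$: because $K_t^\circ$ evolves by the dual expanding flow (\ref{e: dual flow}), a H\"{o}lder-inequality computation gives $\frac{d}{dt}\bigl(A(K_t)A(K_t^\circ)\bigr)\geq 0$, with equality characterizing centered ellipses. Combined with the Santal\'{o} upper bound $A(K)A(K^\circ)\leq\pi^2$, this product converges monotonically to some value $\leq \pi^2$ as $t\uparrow T$, and the sharp $p$-affine isoperimetric inequality in $\mathcal{K}_{sym}$ (reproved in the last section of the paper) supplies $SL(2)$-invariant a priori bounds: for every $t$ there exists $\phi_t\in SL(2)$ such that $\phi_t(\tilde K_t)$ has inradius and circumradius bounded away from $0$ and $\infty$. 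Blaschke selection then extracts Hausdorff-convergent subsequences, and the rigidity of the Santal\'{o} inequality inside $\mathcal{K}_{sym}$ forces every subsequential limit to be a centered ellipse of area $\pi$, which modulo $SL(2)$ is the unit disk.

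The main obstacle is the third step. The anisotropy and loss of translation invariance for $p>1$ preclude Andrews-style direct bounds on $s$ and $\kappa$; the substitute, and the technical crux of the argument, is the monotonicity of $A(K_t)A(K_t^\circ)$ along the coupled pair (\ref{e: faree})--(\ref{e: dual flow}) combined with the $p$-affine isoperimetric inequality. This is also why the central symmetry hypothesis is indispensable: it keeps the origin canonically located inside every $K_t$ (so that $s>0$ and every centro-affine quantity is well defined) and it is the setting in which the relevant sharp affine isoperimetric inequality is available.
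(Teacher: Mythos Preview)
Your first two steps are essentially those of the paper, though ``converges to the origin'' needs a bit more than the ball comparison you sketch: one must argue that a uniform positive lower bound on $s$ would allow extension past $T$ (so $A(t)\to 0$), and then that the length goes to zero as well (the paper's Proposition~\ref{prop: length}, which uses the monotone quantities of Proposition~\ref{cor: strengthned speed}).

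The genuine gap is in the third step. You correctly note that $A(K_t)A(K_t^\circ)$ is nondecreasing and bounded above by $\pi^2$, so it converges to some $L\le\pi^2$. But the rigidity of Santal\'{o} only characterizes bodies with area product \emph{equal} to $\pi^2$; it says nothing about a Hausdorff limit $K_\infty$ satisfying $A(K_\infty)A(K_\infty^\circ)=L<\pi^2$. You have given no argument that $L=\pi^2$, and monotonicity alone cannot supply one: on the finite interval $[0,T)$ there is no reason the time derivative of the area product should vanish, and even if it did along a subsequence you would still need a quantitative stability statement (small derivative $\Rightarrow$ near-ellipse) that you have not provided.

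The paper closes this gap differently. Working in the affine setting, it derives the evolution of $\Omega_p$ and proves a strengthened inequality (Theorem~\ref{thm: strong affine isoperimetric inequality}) in which the defect $\frac{d}{dt}\Omega_p-\frac{p-2}{p+2}\frac{\Omega_p^2}{A}$ dominates a positive multiple of $\int_\gamma\sigma^{-6p/(p+2)}\sigma_{\mathfrak{s}}^2\,d\mathfrak{s}$. Since the $p$-affine isoperimetric ratio $\Omega_p^{p+2}/A^{2-p}$ is monotone and bounded while $-\ln A(t)\to\infty$, this defect must have $\liminf$ zero (Corollary~\ref{cor: limsup idea}); hence along a subsequence $t_k\to T$ the normalized affine support function satisfies $\tilde\sigma(t_k)\to 1$ uniformly (Corollary~\ref{cor: limit of affine support}). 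An ellipse-sandwiching lemma (Lemma~\ref{lem: ellipsoid app}) then traps $\tilde K_{t_k}$ between two centered ellipses whose ratio tends to one, yielding subsequential convergence to the unit circle modulo $SL(2)$ and \emph{hence} $L=\pi^2$. Only after $L=\pi^2$ is established does the John's-inclusion plus Blaschke-selection argument you outline finish the proof for the full limit $t\to T$.
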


 \begin{theorem}
Let $p>1$. Let $x_{K_0}:\mathbb{S}^1\to\mathbb{R}^2$ be a smooth, strictly convex embedding of $K_0\in\mathcal{K}_{sym}.$ Then there exists a unique solution $x:\mathbb{S}^1\times [0,T)\to\mathbb{R}^2$ of equation (\ref{e: dual flow}) with initial data $x_{K_0}.$ The solution remains smooth and strictly convex on $[0,T)$ for a finite time $T$ and it expands in all directions to infinity. The rescaled curves given by the embeddings $\sqrt{\frac{\pi}{A_t}}x(\theta,t)$ converge in the Hausdorff metric to the unit circle modulo $SL(2).$
\end{theorem}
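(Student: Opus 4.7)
The strategy is to deduce Theorem 2 from Theorem 1 through the polar duality $K\mapsto K^{\circ}$. Polar duality preserves $\mathcal{K}_{sym}$ and, by Lemma \ref{lem: dual flow}, intertwines the contracting flow (\ref{e: faree}) with the expanding flow (\ref{e: dual flow}). So, given initial datum $K_0\in\mathcal{K}_{sym}$ for (\ref{e: dual flow}), I would run (\ref{e: faree}) from $K_0^{\circ}$ and use Theorem 1 to obtain the unique solution $\tilde K_t\in\mathcal{K}_{sym}$ on a finite maximal interval $[0,T)$ that shrinks to the origin and whose normalization converges to the unit disk $B_1$ modulo $SL(2)$; then I set $K_t := \tilde K_t^{\circ}$.

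Since polar duality is an involution that preserves central symmetry as well as smoothness and strict convexity, $K_t$ is a smooth strictly convex solution of (\ref{e: dual flow}) on $[0,T)$, and uniqueness is inherited from the uniqueness in Theorem 1 combined with the one-to-one correspondence between (\ref{e: dual flow}) and (\ref{e:  dual asli}). Because $\tilde K_t$ collapses to the origin as $t\to T$, its radial function $\rho_{\tilde K_t}$ tends uniformly to zero on $\mathbb{S}^1$, and the standard identity $s_{K_t}(u)=1/\rho_{\tilde K_t}(u)$ then forces the support function $s_{K_t}$ to blow up uniformly. This is precisely expansion to infinity in every direction.

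For the normalized convergence, Theorem 1 provides $S_t\in SL(2)$ with $L_t := S_t\bigl(\sqrt{\pi/\tilde A_t}\,\tilde K_t\bigr)\to B_1$ in the Hausdorff metric, where $\tilde A_t = \mathrm{Area}(\tilde K_t)$. Near the limit the $L_t$ are trapped in a fixed annulus about the origin, so the polar map is Hausdorff-continuous along the sequence and $L_t^{\circ}\to B_1^{\circ}=B_1$. A direct computation using $(\lambda X)^{\circ}=\lambda^{-1}X^{\circ}$ and $(SX)^{\circ}=S^{-\top}X^{\circ}$ gives $L_t^{\circ}=\sqrt{\tilde A_t/\pi}\,S_t^{-\top}K_t$, and comparing areas yields $A_t\tilde A_t\to \pi^2$, that is, Santal\'o equality saturated in the limit. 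Hence the ratio $\sqrt{\pi/A_t}\big/\sqrt{\tilde A_t/\pi}\to 1$, and so $S_t^{-\top}\bigl(\sqrt{\pi/A_t}\,K_t\bigr)\to B_1$ in the Hausdorff metric, which is exactly the desired $SL(2)$-convergence.

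The main technical point is the Hausdorff-continuity of polar duality along this particular sequence, which reduces to the $L_t$ being trapped in a uniform annulus about the origin; this is immediate once Theorem 1 has produced Hausdorff convergence to $B_1$. Everything else is a straightforward transcription through the duality.
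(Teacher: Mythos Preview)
Your proposal is correct and follows essentially the same duality approach as the paper: existence, uniqueness, and expansion to infinity are deduced from the contracting flow via $K_t=\tilde K_t^{\circ}$, and the normalized convergence is obtained by applying the polar map to the $SL(2)$-normalized contracting solutions and using $(L\tilde K)^{\circ}=L^{-\top}\tilde K^{\circ}$. You are in fact more explicit than the paper on two points the paper leaves implicit: the Hausdorff continuity of the polar map near $B_1$, and the correction factor $\sqrt{\pi/A_t}\big/\sqrt{\tilde A_t/\pi}\to 1$ (which the paper would get from its Corollary~\ref{cor: lim of volume product}, while you extract it directly from the area of $L_t^{\circ}$).
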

The paper is structured as follows. The next section focuses on the $p$-contracting affine flow. We show that the evolving curves shrink to a point in finite time. To study the convergence of solutions, we resort to the affine differential geometry in the third section. In this section we will obtain a sharp affine isoperimetric inequality along the flow. In the fourth section, we obtain a crucial result about the constant asymptotic value of the centro-affine curvature of any solution. It is here where we conclude the convergence of solutions to a circle modulo $SL(2)$. In the fifth section, we present the relation between the contracting and the expanding flows. Consequently, we deduce an analogous asymptotic behavior for the $p$-expanding affine flow. Finally, in last section, we present a new proof of $p$-affine isoperimetric inequality, $p\geq 1$, for smooth, centrally symmetric convex bodies in $\mathbb{R}^2$.

\section{Convergence to a point and homothetic solutions}

This section is devoted to the contracting $p$-centro-affine
curvature flow. In what follows, by evolving curves we mean the
curves that enclose the evolving convex bodies in
$\mathcal{K}_{sym}.$ We start by recalling two results from \cite{S} whose
proofs are obtained by standard methods employed for geometric PDEs.
\begin{proposition}[Short-term Existence and Uniqueness]\label{prop: short time existence and uniqueness}
Let $K_0$ be a convex body belonging to $\mathcal{K}_{sym}$ and let
$p\ge 1$. Then, there exists a time $T>0$ for which equation (\ref{e: faree})
 has a unique  solution starting from $K_0.$
\end{proposition}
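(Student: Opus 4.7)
The plan is to convert the vectorial flow (\ref{e: faree}) to the scalar equation (\ref{e: asli}) for the support function, and then apply standard short-time existence theory for quasilinear parabolic equations on a compact manifold. The equivalence of the two formulations is mentioned after (\ref{e: asli}) (following \cite{BA1}) and is obtained by absorbing the tangential component of the velocity in (\ref{e: faree}) into a time-dependent reparametrization of $\mathbb{S}^1$; it preserves smoothness and strict convexity. Thus it suffices to produce a unique smooth solution $s(\cdot,t)$ of (\ref{e: asli}) on some interval $[0,T)$ with $s(\cdot,0)=s_{\partial K_0}$.

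First I would verify that (\ref{e: asli}) is uniformly parabolic near the initial datum. Writing $F(s,s_{\theta\theta}):=-s^{1-\frac{3p}{p+2}}(s_{\theta\theta}+s)^{-\frac{p}{p+2}}$, a direct computation gives
\begin{equation*}
\frac{\partial F}{\partial s_{\theta\theta}}=\frac{p}{p+2}\,s^{1-\frac{3p}{p+2}}\,\mathfrak{r}^{-\frac{p}{p+2}-1},
\end{equation*}
which is strictly positive whenever $s,\mathfrak{r}>0$ and $p\geq 1$. Since $K_0$ is smooth and strictly convex with the origin in its interior, both $s_{\partial K_0}$ and $\mathfrak{r}[s_{\partial K_0}]$ are smooth and strictly positive on the compact manifold $\mathbb{S}^1$; by continuity, these positivity bounds persist on a short time interval, and (\ref{e: asli}) is uniformly parabolic there.

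Given parabolicity, short-time existence and uniqueness in a parabolic H\"older space $C^{2+\alpha,1+\alpha/2}(\mathbb{S}^1\times[0,T))$ follow from the standard inverse function theorem argument applied to the linearization of (\ref{e: asli}) at $s_{\partial K_0}$, or equivalently from Hamilton's Nash--Moser implicit function theorem for nonlinear parabolic equations on compact manifolds. Smoothness of the solution for $t>0$ is then obtained by a Schauder bootstrap, and uniqueness in the smooth class follows either from this contraction framework or from the maximum principle applied to the difference of two solutions. The only mildly delicate point, rather than a genuine obstacle, is to set the problem up on a convex open subset of $C^{2+\alpha}(\mathbb{S}^1)$ consisting of support functions with $s>0$ and $\mathfrak{r}[s]>0$, so that $F$ is smooth there and the linearization is invertible; strict convexity of $K_0$ makes this routine.
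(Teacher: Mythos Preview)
Your proposal is correct and is precisely the ``standard method'' the paper alludes to: the paper does not actually prove this proposition but merely recalls it from \cite{S}, noting that the proof is obtained by standard methods for geometric PDEs. Your sketch---reducing to the scalar equation (\ref{e: asli}), checking strict parabolicity via $\partial F/\partial s_{\theta\theta}>0$ on the open set where $s,\mathfrak{r}>0$, and invoking linearization/Schauder theory on $\mathbb{S}^1$---is exactly what that phrase unpacks to, so there is nothing to compare.
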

\begin{proposition}[Containment Principle] \label{prop: containment principle}
 If $K_{in}$ and $K_{out}$ are the two convex
bodies in $\mathcal{K}_{sym}$ such that $K_{int}\subset K_{out}$,
and $p\ge 1$, then $K_{in}(t)\subseteq K_{out}(t)$ for as long as
the solutions $K_{in}(t)$ and $K_{out}(t)$ of (\ref{e: faree}) $($with given initial data
$K_{in}(0)=K_{in}$, $K_{out}(0)=K_{out}$$)$   exist
in $\mathcal{K}_{sym}.$
\end{proposition}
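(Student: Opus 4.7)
The plan is to translate the containment $K_{in}(t) \subseteq K_{out}(t)$ into the pointwise inequality $s_{\partial K_{in}(t)} \le s_{\partial K_{out}(t)}$ on $\mathbb{S}^1$, and then deduce this inequality via the parabolic maximum principle applied to the scalar equation (\ref{e: asli}).

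Write $s_1 := s_{\partial K_{out}(t)}$ and $s_2 := s_{\partial K_{in}(t)}$, and work on an arbitrary closed subinterval $[0,T_*]$ of the common maximal interval of existence. On such an interval both solutions are smooth and strictly convex, so $s_i$ and $\mathfrak{r}_i:=(s_i)_{\theta\theta}+s_i$ ($i=1,2$) are bounded above and bounded away from $0$. Writing the right-hand side of (\ref{e: asli}) as $F(s,\mathfrak{r}[s])$ with $F(a,b) := -a^{\,1-\frac{3p}{p+2}}\,b^{-\frac{p}{p+2}}$, a direct computation gives
\[
\partial_b F = \tfrac{p}{p+2}\,a^{\,1-\frac{3p}{p+2}}\,b^{-\frac{p}{p+2}-1} > 0,
\qquad
\partial_a F = \tfrac{2(p-1)}{p+2}\,a^{-\frac{3p}{p+2}}\,b^{-\frac{p}{p+2}} \ge 0
\]
for $p\ge 1$. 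The first inequality is the parabolicity of the flow in support-function coordinates, while the second says that the drift coming from the $s$-dependence is order preserving.

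Next I would set $w:=s_1 - s_2$ and apply the mean value theorem in each variable of $F$ to obtain bounded functions $A,B$ on $\mathbb{S}^1\times[0,T_*]$ with $A\ge 0$ and $B\ge B_0>0$ such that
\[
\partial_t w \;=\; A\,w + B\,\mathfrak{r}[w] \;=\; B\,w_{\theta\theta} + (A+B)\,w.
\]
This is a uniformly parabolic linear equation on the circle with bounded coefficients and nonnegative initial datum. Choosing $M > \sup_{\mathbb{S}^1\times[0,T_*]}(A+B)$ and setting $\tilde w := e^{-Mt}w$ converts the equation to $\tilde w_t - B\,\tilde w_{\theta\theta} + (M-A-B)\tilde w = 0$ with a strictly positive zeroth-order coefficient, so the weak maximum principle forbids an interior negative minimum and yields $\tilde w \ge 0$, hence $s_1\ge s_2$, on $\mathbb{S}^1\times[0,T_*]$.

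The only point that requires care is verifying $\partial_a F \ge 0$, which is exactly where the hypothesis $p\ge 1$ is used; after that the argument is a textbook application of the maximum principle on a compact manifold, and since $T_*$ was arbitrary, the containment persists on the full common interval of existence.
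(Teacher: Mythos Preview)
The paper does not supply its own proof of this proposition; it is simply recalled from \cite{S} with the remark that the argument follows ``standard methods employed for geometric PDEs.'' Your proof is a correct and complete instantiation of precisely that standard approach: pass to support functions, linearize the difference of two solutions of (\ref{e: asli}), and apply the scalar parabolic maximum principle on $\mathbb{S}^1$.

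One small comment on your closing sentence: the place where you say the hypothesis $p\ge 1$ enters, namely the sign of $\partial_a F$, is not actually needed for your argument. Your substitution $\tilde w = e^{-Mt}w$ with $M>\sup(A+B)$ yields a strictly positive zeroth-order coefficient regardless of the sign of $A$, so the maximum principle step goes through for any bounded $A$. In other words, your proof establishes the containment principle for all $p$ for which the equation is parabolic (i.e.\ $\partial_b F>0$), not only $p\ge 1$; the restriction $p\ge 1$ is simply inherited from the statement rather than used in the proof.
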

\begin{lemma}\label{lem: symmetry preserved}Let $\{K_t\}_t$ be a solution of (\ref{e: faree}) where $K_0\in\mathcal{K}_{sym}.$
Then, $K_t\in\mathcal{K}_{sym}$ as long as the flow exists.
\end{lemma}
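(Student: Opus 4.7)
The plan is to exploit the translation invariance of the PDE (\ref{e: asli}) in the angular variable together with uniqueness of solutions. Writing the flow as a scalar parabolic equation for the support function $s(\theta,t)$, the right-hand side $-s^{1-\frac{3p}{p+2}}\mathfrak{r}[s]^{-\frac{p}{p+2}}$ depends on $\theta$ only through $s$ and $s_{\theta\theta}$, so shifting the angular variable commutes with the evolution.

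Concretely, I would define $\tilde{s}(\theta,t):=s(\theta+\pi,t)$, which is the support function of the reflected body $-K_t$. Since the equation is autonomous in $\theta$, a direct substitution shows that $\tilde{s}$ satisfies
\begin{equation*}
\frac{\partial}{\partial t}\tilde{s}=-\tilde{s}^{1-\frac{3p}{p+2}}\mathfrak{r}[\tilde{s}]^{-\frac{p}{p+2}},
\end{equation*}
i.e.\ the same parabolic equation as $s$. The central symmetry of $K_0$ gives the matching initial condition $\tilde{s}(\theta,0)=s(\theta+\pi,0)=s(\theta,0)$. Proposition \ref{prop: short time existence and uniqueness} (together with standard uniqueness for this quasilinear parabolic PDE on $\mathbb{S}^1$) then forces $\tilde{s}(\cdot,t)\equiv s(\cdot,t)$ throughout the life of the flow, which is exactly the statement $s(\theta+\pi,t)=s(\theta,t)$, i.e.\ $K_t\in\mathcal{K}_{sym}$.

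The only mildly delicate point is making sure that the uniqueness statement available to us applies to the \emph{a priori} non-symmetric competitor $\tilde{s}$. I would handle this either by noting that the short-time existence result of \cite{S} is proved for general smooth strictly convex bodies (with origin in the interior), of which symmetric ones are a special case, or simply by appealing to the standard uniqueness theory for strictly parabolic quasilinear scalar equations on $\mathbb{S}^1$ with smooth positive initial data — both $s$ and $\tilde{s}$ remain smooth, positive, and have positive radius of curvature on a common short time interval, so the equation is uniformly parabolic there. Beyond this, everything is formal.
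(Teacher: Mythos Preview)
Your proof is correct and follows essentially the same approach as the paper: exploit the invariance of the evolution under the reflection $\theta\mapsto\theta+\pi$ and conclude by uniqueness. The only cosmetic difference is that the paper carries out the argument at the level of the embedding $x$ (observing that $-x(\cdot+\pi,t)$ and $x(\cdot,t)$ both solve (\ref{e: faree}) with the same initial data), whereas you work with the support function formulation (\ref{e: asli}); these are equivalent via the correspondence noted after (\ref{e: asli}). Your explicit remark about the scope of the uniqueness statement is a nice touch that the paper leaves implicit.
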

\begin{proof}
 Note that both $-x(\cdot+\pi,t)$ and $x(\cdot,t)$ satisfy (\ref{e: faree}) with initial data $-x(\cdot+\pi,0)$ and $x(\cdot,0)$, respectively. At time $t=0$ we have $-x(\cdot+\pi,0)=x(\cdot,0)$. Therefore, by Proposition \ref{prop: short time existence and uniqueness}
we conclude that $-x(\cdot+\pi,t)=x(\cdot,t)$ as long as the flow exists.

\end{proof}
The following evolution equations can be derived by a direct
computation.
\begin{lemma}\label{lem: evolution equations} Under the flow (\ref{e:
asli}), one has
\begin{equation}\label{e: evolution equation}\frac{\partial}{\partial t}\mathfrak{r}=-\frac{\partial^2}{\partial\theta^2}\left(s^{1-\frac{3p}{p+2}}\mathfrak{r}^{-\frac{p}{p+2}}\right)-
s^{1-\frac{3p}{p+2}}\mathfrak{r}^{-\frac{p}{p+2}},
\end{equation}
\begin{equation}\label{e: volume}
\frac{d}{dt}A(t)=-\Omega_p(t),
\end{equation}
where
$A(t):=A(K_t)=\displaystyle \frac{1}{2} \int_{\mathbb{S}^1}\frac{s}{\kappa}\, d\theta$
is the area enclosed by the evolving curve, hence the area of $K_t$,
and
$\Omega_p(t):=\Omega_p(K_t)=\displaystyle \int_{\mathbb{S}^1}\frac{s}{\kappa}\left(\frac{\kappa}{s^3}\right)^{\frac{p}{p+2}}d\theta,$
is the p-affine length of $\partial K_t$.
\end{lemma}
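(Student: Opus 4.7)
The plan is to derive both evolution equations by direct computation, using only the definition of $\mathfrak{r}$ in terms of $s$, the commutation of $\partial_t$ with $\partial_\theta$, and integration by parts on the circle (so boundary terms vanish automatically). Since equation (\ref{e: asli}) is already stated as a scalar PDE for $s$, this reduces the whole lemma to two short calculations.

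For the first identity, I would start from $\mathfrak{r}[s]=\partial_\theta^2 s + s$ and differentiate in $t$. Because $s(\theta,t)$ is smooth, $\partial_t$ and $\partial_\theta^2$ commute, so $\partial_t \mathfrak{r}=\partial_\theta^2(\partial_t s)+\partial_t s$. Substituting $\partial_t s=-s^{1-\frac{3p}{p+2}}\mathfrak{r}^{-\frac{p}{p+2}}$ from (\ref{e: asli}) immediately yields (\ref{e: evolution equation}). No sign tracking or chain rule beyond this is required.

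For the area formula, I would use that $\kappa=1/\mathfrak{r}$ so $A(t)=\tfrac12\int_{\mathbb{S}^1} s\,\mathfrak{r}\,d\theta$. Differentiating,
\begin{equation*}
\frac{dA}{dt}=\frac{1}{2}\int_{\mathbb{S}^1}\bigl(\partial_t s\cdot\mathfrak{r}+s\cdot\partial_t\mathfrak{r}\bigr)\,d\theta
=\frac{1}{2}\int_{\mathbb{S}^1}\bigl(\partial_t s\cdot\mathfrak{r}+s\,(\partial_\theta^2\partial_t s+\partial_t s)\bigr)\,d\theta.
\end{equation*}
Integrating the middle term by parts twice on $\mathbb{S}^1$ moves the $\partial_\theta^2$ from $\partial_t s$ onto $s$, giving $\int s\,\partial_\theta^2\partial_t s\,d\theta=\int \partial_\theta^2 s\cdot\partial_t s\,d\theta$. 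Combining, $\frac{dA}{dt}=\int_{\mathbb{S}^1}\mathfrak{r}\,\partial_t s\,d\theta$. Inserting (\ref{e: asli}) yields $-\int s^{1-\frac{3p}{p+2}}\mathfrak{r}^{1-\frac{p}{p+2}}\,d\theta$, and one checks directly that the integrand equals $\tfrac{s}{\kappa}\bigl(\tfrac{\kappa}{s^3}\bigr)^{\frac{p}{p+2}}$, which is exactly $\Omega_p(t)$.

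There is no serious obstacle here: the only points where care is needed are the commutation of the time and angle derivatives (legitimate by smoothness of the solution on $[0,T)$, Proposition \ref{prop: short time existence and uniqueness}) and the vanishing of boundary terms in the integration by parts (automatic since integration is over $\mathbb{S}^1$). Everything else is bookkeeping with the exponents $1-\tfrac{3p}{p+2}$ and $\tfrac{p}{p+2}$.
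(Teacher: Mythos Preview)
Your proof is correct and is precisely the direct computation the paper alludes to (the paper itself omits the details, stating only that the equations ``can be derived by a direct computation''). Both identities follow exactly as you outline: the first from $\mathfrak{r}=s_{\theta\theta}+s$ and commuting $\partial_t$ with $\partial_\theta^2$, the second from differentiating $A=\tfrac12\int s\mathfrak{r}\,d\theta$, integrating by parts on $\mathbb{S}^1$, and checking the exponent bookkeeping $s^{1-\frac{3p}{p+2}}\mathfrak{r}^{1-\frac{p}{p+2}}=\tfrac{s}{\kappa}\bigl(\tfrac{\kappa}{s^3}\bigr)^{\frac{p}{p+2}}$.
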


In trying to prove the convergence of the evolving curves to a point, the main obstacle was that, except for the case $p=1$,  we could not find a uniform lower bound on the curvature of evolving curves. However, we could show, with several fruitful consequences, that there exists an entire family of increasing quantities related to the speed of the flow,
$s^{1-\frac{3p}{p+2}}\mathfrak{r}^{-\frac{p}{p+2}}$.
\begin{proposition} \label{cor: strengthned speed} For $1\leq q\leq \frac{2p}{p+1}$, or
$q=0$, the flow (\ref{e: asli}) increases in time\\
$$\displaystyle\min_{\theta\in\mathbb{S}^1}\left(s^{q}\left(\frac{\kappa}{s^3}\right)^{\frac{p}{p+2}}\right)(\theta,t).$$
\end{proposition}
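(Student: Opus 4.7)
The plan is to apply the parabolic maximum principle to
\[Q(\theta,t):=s^{\,q-\frac{3p}{p+2}}\,\mathfrak{r}^{-\frac{p}{p+2}},\]
which, on using $\kappa=1/\mathfrak{r}$, coincides with $s^{q}(\kappa/s^{3})^{p/(p+2)}$. The speed of the flow factors conveniently as $\sigma:=s^{1-\frac{3p}{p+2}}\mathfrak{r}^{-\frac{p}{p+2}}=Q\,s^{\,1-q}$. Using $\partial_t s=-\sigma$ together with the evolution equation for $\mathfrak{r}$ from Lemma~\ref{lem: evolution equations}, a direct differentiation of $\log Q$ yields
\[\partial_t\log Q=-\Bigl(q-\tfrac{3p}{p+2}\Bigr)\frac{\sigma}{s}+\tfrac{p}{p+2}\cdot\frac{\partial_\theta^2\sigma+\sigma}{\mathfrak{r}}.\]

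At a spatial minimum $\theta_0$ of $Q(\cdot,t)$ I would exploit $\partial_\theta Q(\theta_0)=0$ and $\partial_\theta^2 Q(\theta_0)\ge 0$: writing $\sigma=Q\,s^{\,1-q}$, expanding $\partial_\theta^2\sigma$, and using $\partial_\theta^2 s=\mathfrak{r}-s$ (from the very definition of $\mathfrak{r}$) to eliminate second derivatives of $s$, the equation collapses at $\theta_0$ to
\[\partial_t\log Q=\tfrac{p}{p+2}\cdot\tfrac{s^{\,1-q}}{\mathfrak{r}}\,\partial_\theta^2 Q\;+\;Q\Bigl[\alpha\,s^{-q}+\beta\cdot\tfrac{s^{\,1-q}}{\mathfrak{r}}+\gamma\cdot\tfrac{(\partial_\theta s)^2}{s^{\,q+1}\mathfrak{r}}\Bigr],\]
with $\alpha=\tfrac{2(2p-q(p+1))}{p+2}$, $\beta=\tfrac{pq}{p+2}$, and $\gamma=-\tfrac{pq(1-q)}{p+2}$. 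The first summand is nonnegative by the second-order condition.

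I would then read off the admissible range of $q$ from the three signs: $\alpha\ge 0$ forces $q\le\tfrac{2p}{p+1}$; $\beta\ge 0$ forces $q\ge 0$; and $\gamma\ge 0$ forces $q\notin(0,1)$. All three hold simultaneously precisely when $q=0$ or $1\le q\le\tfrac{2p}{p+1}$, which is exactly the range in the statement. Since $Q>0$, one gets $\partial_t Q(\theta_0,t)\ge 0$ at every minimizer, and Hamilton's trick then upgrades this pointwise inequality to monotonicity of $\min_\theta Q(\cdot,t)$.

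The main obstacle is the gradient-square contribution with coefficient $\gamma$. On the open interval $(0,1)$ its sign is unfavorable, and without an independent bound on $(\partial_\theta s)^2/\mathfrak{r}$ there is no obvious way to absorb it into the other two terms; this is what excludes the middle range from the statement. The upper endpoint $\tfrac{2p}{p+1}$ is in turn dictated by the sign of $\alpha$, the only $q$-dependent constant coefficient available.
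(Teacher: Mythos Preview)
Your argument is correct and follows essentially the same route as the paper: you use the same factorization $\sigma=Q\,s^{1-q}$, expand $\partial_\theta^2\sigma$ at a spatial minimum of $Q$, substitute $s_{\theta\theta}=\mathfrak{r}-s$, and read off the admissible range of $q$ from the resulting sign conditions. The only cosmetic differences are that you work with $\log Q$ and keep the gradient-square coefficient $\gamma$ explicit, whereas the paper discards that term as an inequality at the outset; the constraints $\alpha\ge 0$, $\beta\ge 0$, $\gamma\ge 0$ you obtain coincide with the paper's conditions.
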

\begin{proof}
Using the evolution equations  (\ref{e: asli}) and
(\ref{e: evolution equation}), we obtain
\begin{align}\label{e: laplacian}
\frac{\partial}{\partial t}\left(s^{q-\frac{3p}{p+2}}\mathfrak{r}^{-\frac{p}{p+2}}\right)&=\left(\frac{\partial}{\partial t}s^{q-\frac{3p}{p+2}}\right)\mathfrak{r}^{-\frac{p}{p+2}}+s^{q-\frac{3p}{p+2}}\frac{\partial}{\partial t}\mathfrak{r}^{-\frac{p}{p+2}}\nonumber\\
&=-\left(q-\frac{3p}{p+2}\right)s^{q-\frac{3p}{p+2}-1}\mathfrak{r}^{-\frac{p}{p+2}}\left(s^{1-\frac{3p}{p+2}}
\mathfrak{r}^{-\frac{p}{p+2}}\right)\\
&~~~~+\frac{p}{p+2}\mathfrak{r}^{-\frac{p}{p+2}-1}s^{q-\frac{3p}{p+2}}\left[
\left(s^{1-\frac{3p}{p+2}}\mathfrak{r}^{-\frac{p}{p+2}}\right)_{\theta\theta}+s^{1-\frac{3p}{p+2}}\mathfrak{r}^{-\frac{p}{p+2}}\right] \nonumber\\
&=\left(\frac{3p}{p+2}-q\right)s^{q-\frac{6p}{p+2}}\mathfrak{r}^{-\frac{2p}{p+2}}+\frac{p}{p+2}s^{q-\frac{6p}{p+2}+1}
\mathfrak{r}^{-\frac{2p}{p+2}-1}\nonumber\\
&~~~~+\frac{p}{p+2}\mathfrak{r}^{-\frac{p}{p+2}-1}s^{q-\frac{3p}{p+2}}\left(s^{1-\frac{3p}{p+2}}\mathfrak{r}^{-\frac{p}{p+2}}\right)_{\theta
\theta}.
\nonumber
\end{align}
To apply the maximum principle, we need to bound the right-hand side
of ~(\ref{e: laplacian}) from below.
\begin{align}\label{e: lower bound for laplacian}
\left(s^{1-\frac{3p}{p+2}}\mathfrak{r}^{-\frac{p}{p+2}}\right)_{\theta\theta}&=
\left(s^{q-\frac{3p}{p+2}}\mathfrak{r}^{-\frac{p}{p+2}}s^{1-q}\right)_{\theta\theta}\\
&=s^{1-q}
\left(s^{q-\frac{3p}{p+2}}\mathfrak{r}^{-\frac{p}{p+2}}\right)_{\theta\theta}+s^{q-\frac{3p}{p+2}}\mathfrak{r}^{-\frac{p}{p+2}}
\left(s^{1-q}\right)_{\theta\theta}\nonumber\\&~~~~+2\left(s^{1-q}\right)_{\theta}
\left(s^{q-\frac{3p}{p+2}}\mathfrak{r}^{-\frac{p}{p+2}}\right)_{\theta}\nonumber.
\end{align}
At the point where the minimum of
$s^{q-\frac{3p}{p+2}}\mathfrak{r}^{-\frac{p}{p+2}}$ occurs, we have
$$\left(s^{q-\frac{3p}{p+2}}\mathfrak{r}^{-\frac{p}{p+2}}\right)_{\theta\theta}\ge
0,$$
and
$$\left(s^{q-\frac{3p}{p+2}}\mathfrak{r}^{-\frac{p}{p+2}}\right)_{\theta}=0.$$
Therefore, by equation ~(\ref{e: lower bound for laplacian}), we
obtain that, at that point,
\begin{align}\label{ie: chain}
\left(s^{1-\frac{3p}{p+2}}\mathfrak{r}^{-\frac{p}{p+2}}\right)_{\theta\theta}
&\ge s^{q-\frac{3p}{p+2}}\mathfrak{r}^{-\frac{p}{p+2}}\left(s^{1-q}\right)_{\theta\theta}\nonumber\\
&=s^{q-\frac{3p}{p+2}}\mathfrak{r}^{-\frac{p}{p+2}}\left[\left(1-q\right)s^{-q}
s_{\theta\theta}-\left(1-q\right)q\left(s^{-1-q}\right)s_{\theta}^2\right]\nonumber\\
&\ge
s^{q-\frac{3p}{p+2}}\mathfrak{r}^{-\frac{p}{p+2}}\left[\left(1-q\right)s^{-q}s_{\theta\theta}\right]\\ \nonumber
&=s^{q-\frac{3p}{p+2}}\mathfrak{r}^{-\frac{p}{p+2}}\left[\left(1-q\right)s^{-q}(\mathfrak{r}-s)\right]\nonumber\\
&=(1-q)s^{-\frac{3p}{p+2}}\mathfrak{r}^{-\frac{p}{p+2}+1}-(1-q)s^{-\frac{3p}{p+2}+1}\mathfrak{r}^{-\frac{p}{p+2}}\nonumber,
\end{align}
where, to pass from the second to the third line, we assumed that either
$q=0$ or $q\ge 1.$ Combining ~(\ref{e: laplacian}),~(\ref{e: lower
bound for laplacian}) and ~(\ref{ie: chain}), at the point where the
minimum of $s^{1-\frac{3p}{p+2}}\mathfrak{r}^{-\frac{p}{p+2}}$
occurs, we have
\begin{align*}
\frac{\partial}{\partial t}\left(s^{q-\frac{3p}{p+2}}\mathfrak{r}^{-\frac{p}{p+2}}\right)&\ge \left(\frac{3p}{p+2}-q\right)s^{q-\frac{6p}{p+2}}\mathfrak{r}^{-\frac{2p}{p+2}}+\frac{p}{p+2}s^{q-\frac{6p}{p+2}+1}
\mathfrak{r}^{-\frac{2p}{p+2}-1}\\
&~~~~+\frac{p(1-q)}{p+2}s^{q-\frac{6p}{p+2}}\mathfrak{r}^{-\frac{2p}{p+2}}-\frac{p(1-q)}{p+2}s^{q-\frac{6p}{p+2}+1}
\mathfrak{r}^{-\frac{2p}{p+2}-1}\\
&=\left(\frac{3p}{p+2}-q+\frac{p(1-q)}{p+2}\right)s^{q-
\frac{6p}{p+2}}\mathfrak{r}^{-\frac{2p}{p+2}}+\frac{pq}{p+2}s^{q-\frac{6p}{p+2}+1}\mathfrak{r}^{-\frac{2p}{p+2}-1}.
\end{align*}
Since
$$\frac{3p}{p+2}-q+\frac{p(1-q)}{p+2}$$
is non-negative for
$q\leq\frac{2p}{p+1} $, the claim follows.
\end{proof}
Consequently, we have that
\begin{corollary}\label{cor:Convexity is preserved}
 Convexity of the evolving curves is preserved as long as the flow exists.
\end{corollary}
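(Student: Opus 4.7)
The plan is to read off Corollary \ref{cor:Convexity is preserved} as a direct consequence of the $q=0$ case of Proposition \ref{cor: strengthned speed}. That special case asserts that
$$m(t) := \min_{\theta \in \mathbb{S}^1}\left(\frac{\kappa}{s^3}\right)^{\frac{p}{p+2}}(\theta,t)$$
is non-decreasing along the flow \eqref{e: asli}. Since $K_0$ is smooth and strictly convex with the origin in its interior, both $\kappa$ and $s$ are strictly positive on $\mathbb{S}^1$ at $t=0$; setting $c_0 := m(0)^{(p+2)/p} > 0$ and invoking the monotonicity would produce the pointwise bound
$$\kappa(\theta,t) \geq c_0\, s(\theta,t)^3, \qquad (\theta,t) \in \mathbb{S}^1 \times [0,T).$$

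To convert this into positivity of $\kappa$, I would appeal to Lemma \ref{lem: symmetry preserved}: each $K_t$ remains centrally symmetric about the origin, so the origin persists as an interior point of $K_t$, which together with the smoothness supplied by Proposition \ref{prop: short time existence and uniqueness} forces $s(\cdot,t) > 0$ on $\mathbb{S}^1$ throughout the existence interval. Combining this with the previous inequality gives $\kappa(\theta,t) > 0$ at every $\theta$, which is exactly the strict convexity of $\partial K_t$.

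There is essentially no genuine obstacle: Proposition \ref{cor: strengthned speed} already carried out the maximum-principle computation, and this corollary simply reaps the $q=0$ harvest. The only subtlety worth flagging is that Proposition \ref{cor: strengthned speed} delivers a lower bound on $\kappa$ only up to the factor $s^3$, not an absolute constant; it is precisely the central-symmetry hypothesis, via Lemma \ref{lem: symmetry preserved}, that closes this gap by preventing $s$ from vanishing anywhere along the flow.
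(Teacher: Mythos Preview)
Your proof is correct and follows essentially the same route as the paper's: apply the $q=0$ case of Proposition~\ref{cor: strengthned speed} to obtain $\kappa(\theta,t)\ge s^3(\theta,t)\min_{\theta}(\kappa/s^3)(\theta,0)>0$. Your explicit invocation of Lemma~\ref{lem: symmetry preserved} to justify $s>0$ is a welcome elaboration that the paper leaves implicit.
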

\begin{proof}
By Proposition ~\ref{cor: strengthned speed}, setting $q=0$, we have that,
as long as the flow exists,
$$\min_{\theta\in\mathbb{S}^1}\left(\frac{\kappa}{s^3}\right)(\theta,t)\ge
\min_{\theta\in\mathbb{S}^1}\left(\frac{\kappa}{s^3}\right)(\theta,0).$$
This inequality implies
$$\kappa(\theta, t)\ge s^{3}(\theta, t)\min_{\theta\in\mathbb{S}^1}\left(\frac{\kappa}{s^3}\right)(\theta,0)> 0 ,$$ which is precisely the claim of the corollary.
\end{proof}
Following an idea from \cite{Tso}, we consider the evolution of
the function
$\frac{s^{1-\frac{3p}{p+2}}\mathfrak{r}^{-\frac{p}{p+2}}}{s-\rho}$, for some appropriate $\rho$, to
obtain an upper bound on the speed of the flow as long as the
inradius of the evolving curve is uniformly bounded from below.
\begin{lemma}\label{lem: upper bound for speed}
If there exists an $ r>0$ such that $s\ge r$ on $[0,T)$, then
$\kappa$ is uniformly bounded from above on $[0,T)$.
\end{lemma}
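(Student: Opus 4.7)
The plan is to adapt Tso's auxiliary-function trick to control the speed $F := s^{1-\frac{3p}{p+2}}\mathfrak{r}^{-\frac{p}{p+2}}$ from above. First I would note that along the flow $\partial_t s = -F \leq 0$, so $s$ decreases pointwise in time and is therefore bounded above by $M := \max_{\theta} s_{\partial K_0}$; combined with the standing hypothesis $s \geq r$, we have the two-sided bound $r \leq s \leq M$ on $[0,T)$. Since $\kappa = 1/\mathfrak{r}$, the desired upper bound on $\kappa$ will follow from a positive lower bound on $\mathfrak{r}$, which is in turn equivalent to an upper bound on the speed $F$ (because $s$ is bounded on both sides).

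Fix $\rho := r/2$ and consider the auxiliary quantity
$$Q(\theta, t) := \frac{F(\theta,t)}{s(\theta,t) - \rho},$$
which is smooth and positive on $\mathbb{S}^1 \times [0,T)$ because $s - \rho \geq r/2$. Using the evolution equation for $F$ (essentially the $q=1$ case of the calculation in Proposition~\ref{cor: strengthned speed}) together with $\partial_t (s-\rho)^{-1} = F/(s-\rho)^2$, I would compute
$$\frac{\partial_t Q}{Q} = \Bigl(\tfrac{3p}{p+2} - 1\Bigr)\frac{F}{s} + \frac{p}{p+2}\,\frac{F_{\theta\theta} + F}{\mathfrak{r}} + \frac{F}{s-\rho}.$$
At a spatial maximum of $Q$ one has $Q_\theta = 0$ and $Q_{\theta\theta} \leq 0$, and writing $F = Q(s-\rho)$ these translate into $F_{\theta\theta} \leq Q\,s_{\theta\theta} = Q(\mathfrak{r} - s)$. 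Substituting back and simplifying, every remaining occurrence of $F$ becomes $Q(s-\rho)$, so the right-hand side reduces to $Q$ times a bracket of the form
$$\frac{2(p-1)(s-\rho)}{(p+2)\,s} + \frac{2(p+1)}{p+2} - \frac{p\,\rho}{(p+2)\,\mathfrak{r}} \;\leq\; \frac{4p}{p+2} - \frac{p\,\rho}{(p+2)\,\mathfrak{r}}.$$

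The key observation is that the $-\rho/\mathfrak{r}$ contribution becomes strongly negative precisely where $\mathfrak{r}$ is small. Since $r \leq s \leq M$ and $F = s^{-2(p-1)/(p+2)} \mathfrak{r}^{-p/(p+2)}$, the bound $s \leq M$ converts any large value of $F$ (hence of $Q$) into a correspondingly small value of $\mathfrak{r}$. Thus there exists a threshold $Q_\ast = Q_\ast(p, r, M)$ such that at any spatial max where $Q \geq Q_\ast$ the bracket is non-positive, giving $\partial_t Q \leq 0$ there. Applying Hamilton's maximum principle to $Q_{\max}(t) := \max_{\theta} Q(\theta,t)$ yields $Q_{\max}(t) \leq \max\{Q_{\max}(0),\, Q_\ast\}$, which gives a uniform upper bound on $F$, hence a uniform positive lower bound on $\mathfrak{r}$, and therefore a uniform upper bound on $\kappa$.

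The main obstacle is purely bookkeeping: one must track signs carefully so that the single "good" term $-p\rho/((p+2)\mathfrak{r})$ is isolated while all the other contributions collapse into a structural constant times $Q^2$. The hypothesis $s \geq r$ is used twice in an essential way—once to keep the denominator $s - \rho$ uniformly bounded below, and once, via the $s \leq M$ upper bound, to ensure that large $Q$ genuinely forces small $\mathfrak{r}$, which is what makes the $-\rho/\mathfrak{r}$ term dominant.
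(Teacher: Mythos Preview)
Your approach is essentially identical to the paper's: both apply Tso's trick to the same auxiliary function $Q=\Psi=F/(s-\rho)$ with $\rho=r/2$, and the computation you outline matches the paper's. One small slip: in the step ``large $Q$ forces small $\mathfrak r$'' it is the \emph{lower} bound $s\ge r$ that does the work, not $s\le M$. Indeed, since the exponent on $s$ in $F=s^{-2(p-1)/(p+2)}\mathfrak r^{-p/(p+2)}$ is non-positive for $p\ge 1$, one has $\mathfrak r^{-p/(p+2)}=F\,s^{2(p-1)/(p+2)}\ge F\,r^{2(p-1)/(p+2)}$ only via $s\ge r$ in the other direction; more directly, $F=Q(s-\rho)\ge Q\,r/2$ and $\mathfrak r^{-p/(p+2)}=F/s^{-2(p-1)/(p+2)}\ge F\,r^{2(p-1)/(p+2)}$---both use $s\ge r$. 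The upper bound $s\le M$ is genuinely needed, but at the \emph{last} step, when you pass from a uniform bound on $Q$ back to a uniform bound on $\kappa$ (since $\kappa=[Q(s-\rho)s^{2(p-1)/(p+2)}]^{(p+2)/p}$). The paper's write-up is slightly sharper in that it reads off the explicit pointwise bound $\kappa\le 4/\rho$ at the space--time maximum of $\Psi$ directly from the inequality $\beta\rho\kappa-\beta-\alpha+1\ge 0$, rather than going through an abstract threshold $Q_\ast$; but the content is the same.
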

\begin{proof}
Define $\Psi (x,t):=
\frac{s^{1-\frac{3p}{p+2}}\mathfrak{r}^{-\frac{p}{p+2}}}{s-\rho}$,
where $\rho=\frac{1}{2}r$. For convenience, we set
$\alpha:={1-\frac{3p}{p+2}}$ and $\beta:={-\frac{p}{p+2}}$ . At the
point where the maximum of $\Psi$ occurs, we have
$$\Psi_{\theta}=0, ~~ \Psi_{\theta\theta}\leq 0,$$
hence we obtain
 \begin{equation}\label{ie: akhar}
\left(s^{\alpha}\mathfrak{r}^{\beta}\right)_{\theta\theta}+s^{\alpha}\mathfrak{r}^{\beta}\leq-\frac{\rho s^{\alpha}\mathfrak{r}^{\beta}-s^{\alpha}\mathfrak{r}^{\beta+1}}{s-\rho}.
 \end{equation}
Calculating
$$\frac{\partial \Psi}{\partial t}=
 \frac{s^{\alpha}}{s-\rho} \frac{\partial
\mathfrak{r}^{\beta}}{\partial
t}+\frac{\mathfrak{r}^{\beta}}{s-\rho} \frac{\partial
s^{\alpha}}{\partial t} -\frac{s^{\alpha} \mathfrak{r}^{\beta}}{(s-
\rho)^2} \frac{\partial s}{\partial t},
$$
and using equation (\ref{e: evolution equation}), and  inequality (\ref{ie: akhar}), we
infer that, at the point where the maximum of $\Psi$ is reached, we
have
$$0\leq\frac{\partial}{\partial t}\Psi\leq\frac{1}{s-\rho}\left[-{\beta}s^{\alpha}\mathfrak{r}^{\beta-1}\left(-\frac{\rho s^{\alpha}\mathfrak{r}^{\beta}- s^{\alpha}\mathfrak{r}^{\beta+1}}{s-\rho}\right)+\mathfrak{r}^{\beta}\frac{\partial}{\partial t} s^{\alpha}+ \frac{(s^{2\alpha}\mathfrak{r}^{2\beta})}{s-\rho}\right].$$
This last inequality gives
$$\beta\rho\kappa-\beta-\alpha+\alpha\rho\frac{1}{s}+1\ge 0.$$ Neglecting the non-positive term $\displaystyle \alpha\rho\frac{1}{s}$, we obtain
$$\beta\rho\kappa-\beta-\alpha+1\ge 0.$$ Note that $\displaystyle
\alpha+\beta-1=-\frac{4p}{p+2}$, therefore
 $\displaystyle  0\le\kappa\leq \frac{4}{\rho}$, consequently, implying the lemma.
\end{proof}

\begin{lemma}\label{lem: volume goes to zero}
Let $T$ be the maximal time  of existence of the solution to the
flow (\ref{e: asli}) with a fixed initial body $K_0 \in
\mathcal{K}_{sym}$. Then $T$ is finite and the area of $K_t$,
$A(t)$, tends to zero as $t$ approaches $T$.
\end{lemma}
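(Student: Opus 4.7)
The plan is to derive a differential inequality of the form $\frac{d}{dt} A(t) \leq -c\sqrt{A(t)}$, from which both $T<\infty$ and $A(t)\to 0$ as $t\to T$ will follow by an elementary ODE comparison. The key observation is that the integrand of $\Omega_p$ factors as (speed)$\,\times\,\mathfrak{r}$; so a uniform positive pointwise lower bound on the flow speed, combined with the classical planar isoperimetric inequality, will translate into a lower bound for $\Omega_p$ in terms of $\sqrt{A(t)}$.

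Concretely, I would first apply Proposition \ref{cor: strengthned speed} with the choice $q=1$, which is admissible because $1 \leq \frac{2p}{p+1}$ for every $p\geq 1$. This yields the pointwise estimate $s^{1-\frac{3p}{p+2}}\mathfrak{r}^{-\frac{p}{p+2}}(\theta,t) \geq c_0$ on $\mathbb{S}^1\times [0,T)$, where $c_0>0$ is the minimum over $\mathbb{S}^1$ of the same quantity at time $t=0$. Rewriting the $p$-affine length as
$$\Omega_p(t) \;=\; \int_{\mathbb{S}^1}\Bigl(s^{1-\frac{3p}{p+2}}\mathfrak{r}^{-\frac{p}{p+2}}\Bigr)\,\mathfrak{r}\,d\theta$$
and inserting this bound gives $\Omega_p(t) \geq c_0\,L(t)$, where $L(t)=\int_{\mathbb{S}^1}\mathfrak{r}\,d\theta$ is the Euclidean perimeter of $\partial K_t$. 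The classical isoperimetric inequality $L(t)^2 \geq 4\pi A(t)$ then produces
$$\frac{d}{dt} A(t) \;=\; -\Omega_p(t) \;\leq\; -2 c_0 \sqrt{\pi\, A(t)}.$$

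Setting $u(t):=\sqrt{A(t)}$ turns this into $u'(t) \leq -c_0\sqrt{\pi}$, hence $u(t) \leq u(0) - c_0\sqrt{\pi}\,t$. Because the flow remains in $\mathcal{K}_{sym}$ where $A>0$, the maximal existence time must satisfy $T\leq \sqrt{A(0)}/(c_0\sqrt{\pi}) < \infty$, and one gets $A(t)\to 0$ as $t\to T$ simultaneously. I do not expect a real obstacle here: Proposition \ref{cor: strengthned speed} supplies the decisive speed lower bound, and everything downstream is a one-line ODE comparison together with the isoperimetric inequality. The containment principle (Proposition \ref{prop: containment principle}), although consistent with the picture, is not actually needed for this particular lemma.
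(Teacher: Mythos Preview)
Your argument for $T<\infty$ is correct and is in fact a pleasant alternative to the paper's route: the paper encloses $K_0$ in a circle and invokes the containment principle, whereas you extract a uniform lower bound on the speed from Proposition~\ref{cor: strengthned speed} with $q=1$, combine it with the planar isoperimetric inequality, and finish with an ODE comparison. That part is fine.

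The gap is in the second conclusion. The inequality $u'(t)\le -c_0\sqrt{\pi}$ only gives an \emph{upper} bound $u(t)\le u(0)-c_0\sqrt{\pi}\,t$, which forces $T\le u(0)/(c_0\sqrt{\pi})$ but says nothing about $\lim_{t\to T}A(t)$. It is entirely consistent with your differential inequality that the solution develops a singularity (say curvature blow-up) at some $T$ strictly smaller than $u(0)/(c_0\sqrt{\pi})$ while $A(T^-)$ remains positive. Your sentence ``one gets $A(t)\to 0$ as $t\to T$ simultaneously'' is exactly the assertion to be proved, and it does not follow from the ODE comparison.

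What is missing is an argument that if $A(t)$ (equivalently, $s$) stays bounded below by a positive constant on $[0,T)$, then the solution can be smoothly continued past $T$, contradicting maximality. This is how the paper proceeds: assuming $s\ge r>0$, Lemma~\ref{lem: upper bound for speed} gives a uniform curvature upper bound, Corollary~\ref{cor:Convexity is preserved} gives a positive lower bound, hence the equation is uniformly parabolic, and Krylov--Safonov together with Schauder estimates yield higher regularity and extendability. Without this regularity step (or a substitute for it), your proof of $A(t)\to 0$ is incomplete.
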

\begin{proof}
Suppose that $S_0$ is a circle which, at time zero, encloses $K_0$.
It is clear that, by applying the $p$-flow to $S_0$, the evolving circles $S_t$
converge to a point in finite time. By Proposition ~\ref{prop:
containment principle}, $ K_t$ remains in the closure of $S_t$, therefore $T$
must be finite. Suppose now that $A(t)$ does not tend to zero. Then,
we must have $s\geq r$, for some $r>0$ on $[0,T)$. By Corollary
~\ref{cor:Convexity is preserved}, and Lemma \ref{lem: upper bound for
speed}, the curvature of the solution remains bounded on [0,T) from below and above.
Consequently the evolution equation (\ref{e: asli}) is uniformly parabolic on [0,T),
and bounds on higher derivatives of the support function follows by
\cite{K} and Schauder theory. Hence, we can extend the solution after time $T$,  contradicting its definition.
\end{proof}

\begin{lemma}\label{lem:decreasing p-affine surface area}
Assume $1\leq l< 2$. Then, any solution of the flow (\ref{e: asli})  satisfies $\lim_{t\to T}\Omega_l(t)=0$.
\end{lemma}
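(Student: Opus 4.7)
The plan is to bound $\Omega_l(t)$ by a positive power of $A(t)$ and then invoke Lemma~\ref{lem: volume goes to zero}. The natural tool is Lutwak's $p$-affine isoperimetric inequality for centrally symmetric planar convex bodies \cite{Lutwak}: for every $K \in \mathcal{K}_{sym}$ and every $l \ge 1$,
$$\Omega_l(K)^{l+2} \;\le\; 2^{l+2}\pi^{2l}\, A(K)^{2-l}.$$
Because $K_t \in \mathcal{K}_{sym}$ throughout the flow by Lemma~\ref{lem: symmetry preserved}, the inequality is available along the entire solution, with no supplementary normalization needed.

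The hypothesis $1 \le l < 2$ enters precisely to ensure that the exponent $2-l$ of $A(K)$ on the right-hand side is strictly positive (nonnegative if $l=1$). Under this assumption the inequality rearranges to
$$\Omega_l(t) \;\le\; 2\pi^{2l/(l+2)}\, A(t)^{(2-l)/(l+2)},$$
and since $A(t) \to 0$ as $t \to T$ by Lemma~\ref{lem: volume goes to zero}, the conclusion $\Omega_l(t) \to 0$ follows at once. The boundary case $l = 1$ is nothing other than the classical affine isoperimetric inequality $\Omega_1^3 \le 8\pi^2 A$, and the argument still applies because $A(t) \to 0$.

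There is essentially no technical obstacle at this step. The only point that deserves a comment is an apparent worry of circularity, since the paper's last section promises a new proof of the same $p$-affine isoperimetric inequality; this worry is unfounded, as Lutwak's result is a pre-existing classical theorem that in no way relies on the flow~(\ref{e: asli}). Thus we are free to cite it here as an external input, while the final section merely supplies an alternative, flow-based derivation as a by-product of the asymptotic analysis developed in the intervening sections.
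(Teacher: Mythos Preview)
Your proof is correct and follows exactly the same route as the paper: apply Lutwak's $p$-affine isoperimetric inequality $\Omega_l^{l+2}\le 2^{l+2}\pi^{2l}A^{2-l}$ and invoke Lemma~\ref{lem: volume goes to zero}. Your added remarks on the applicability of the inequality via Lemma~\ref{lem: symmetry preserved} and on the absence of circularity are accurate and harmless, though the parenthetical ``(nonnegative if $l=1$)'' is unnecessary since $2-l=1>0$ there.
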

\begin{proof} From the
$p$-affine isoperimetric inequality in $\mathbb{R}^2$,  \cite{Lutwak},
we have $$0 \leq \Omega_l^{2+l}(t)\leq 2^{2+l}\pi^{2l}A^{2-l}
(t),$$ for any $l\ge 1.$ Therefore, the result is a direct consequence of Lemma \ref{lem: volume goes to
zero}.
\end{proof}
\begin{proposition}\label{prop: length}
 Let $L(t)$ be the length of $\partial K_t$ as $K_t$ is evolving under (\ref{e: asli}). If $ p\geq 1$, then $\lim_{t\to T}L(t)=0.$
\end{proposition}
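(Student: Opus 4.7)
My plan is to apply the maximum principle to $M(t):=\max_{\theta\in\mathbb{S}^1}s(\theta,t)$ in order to force $M$ (and hence $L$) to vanish in finite time, exploiting the elementary bound $L(t)=\int_{\mathbb{S}^1}s\,d\theta\leq 2\pi M(t)$. At any point $\theta^*(t)$ where $s(\cdot,t)$ attains its maximum one has $s_\theta=0$ and $s_{\theta\theta}\leq 0$, hence $\mathfrak{r}=s+s_{\theta\theta}\leq s=M$; substituting this into (\ref{e: asli}) at $\theta^*$ and using $\mathfrak{r}^{-p/(p+2)}\geq M^{-p/(p+2)}$ yields
\[
\partial_t s(\theta^*,t)\leq -M(t)^{(2-3p)/(p+2)}.
\]
The standard envelope argument for smooth families then gives the singular differential inequality $dM/dt\leq -M^{(2-3p)/(p+2)}$, whose exponent is strictly negative for $p\geq 1$.

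Multiplying by $M^{(3p-2)/(p+2)}$ and integrating would yield
\[
M(t)^{4p/(p+2)}\leq M(0)^{4p/(p+2)}-\frac{4p}{p+2}\,t,
\]
so that the upper envelope of $M$ reaches zero at the explicit finite time $T^*:=\tfrac{p+2}{4p}M(0)^{4p/(p+2)}$. Since $M(t)>0$ throughout the interval of existence, we have $T\leq T^*$; combining this with Lemma~\ref{lem: volume goes to zero} (which identifies $T$ as the finite extinction time of the area), I would conclude $M(t)\to 0$ as $t\to T^{-}$, and hence $L(t)\to 0$.

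The hard part will be ensuring $M(T^{-})=0$ rather than merely $M(T^{-})\leq[M(0)^{4p/(p+2)}-\tfrac{4p}{p+2}T]^{(p+2)/(4p)}$, which could be positive if $T<T^*$. A positive limit for $M$ together with $A(T^{-})=0$ and the central symmetry would force the body to degenerate onto a segment through the origin, a scenario I would exclude using Proposition~\ref{cor: strengthned speed} applied with $q=2p/(p+1)$: the uniform positive lower bound on $s^{2p/(p+1)}(\kappa/s^3)^{p/(p+2)}$ obstructs the curvature decay required for a segment collapse, since one checks directly (as for a thin ellipse with semiaxes $a$ fixed and $b\to 0$) that this quantity tends to zero at the ``sides'' of any such approximating body, contradicting the proposition.
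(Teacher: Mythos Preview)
Your envelope inequality for $M(t)=\max_\theta s(\theta,t)$ is correct and does give $dM/dt\le -M^{(2-3p)/(p+2)}$, hence $T\le T^*$; but as you yourself note, this only bounds $M(T^-)$ above by a possibly positive number, so the entire content of the proposition is pushed into your ``hard part''. There the argument has a genuine gap. The thin-ellipse computation works only because for a centered ellipse the centro-affine curvature $\kappa/s^3$ is \emph{constant}, so $s^{2p/(p+1)}(\kappa/s^3)^{p/(p+2)}$ reduces to a constant times $s^{2p/(p+1)}$ and vanishes with $\min s$. For a general symmetric body collapsing onto a segment no such simplification is available; at the direction where $s$ is minimal the relevant exponent is $\tfrac{2p}{p+1}-\tfrac{3p}{p+2}=\tfrac{p(1-p)}{(p+1)(p+2)}\le 0$, so a small value of $s$ pushes the quantity \emph{up}, and one cannot conclude pointwise that the minimum tends to zero. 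An actual proof of the claim ``$A\to 0$, $M\ge c>0$ $\Rightarrow$ $\min_\theta s^{2p/(p+1)}(\kappa/s^3)^{p/(p+2)}\to 0$'' seems to require an integral estimate.

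The natural integral estimate is precisely the paper's argument: choose $l=\tfrac{2(p+1)}{p+3}\in[1,2)$, so that $\bigl(s(\kappa/s^3)^{l/(l+2)}\bigr)^{2p/(p+1)}=s^{2p/(p+1)}(\kappa/s^3)^{p/(p+2)}$, and observe
\[
\min_\theta\Bigl[s\Bigl(\tfrac{\kappa}{s^3}\Bigr)^{l/(l+2)}\Bigr]\cdot L(t)\ \le\ \Omega_l(t).
\]
Lemma~\ref{lem:decreasing p-affine surface area} (via the $l$-affine isoperimetric inequality) gives $\Omega_l\to 0$, while Proposition~\ref{cor: strengthned speed} with $q=2p/(p+1)$ bounds the minimum from below; hence $L\to 0$ directly. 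This is exactly how the paper proceeds, and once this machinery is invoked to close your gap, the max-principle step on $M$ becomes superfluous --- one obtains $L\to 0$ without first reducing to $M\to 0$.
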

\begin{proof}
We first seek an $l$ with the following simultaneous properties:
\begin{enumerate}
  \item $ 1\leq l<2,$
  \item $ 1\leq\frac{p}{p+2}\frac{l+2}{l}\leq \frac{2p}{p+1}.$
\end{enumerate}

Note that, by Lemma  ~\ref{lem:decreasing p-affine surface area}, the condition
$(1)$ implies $\lim_{t\to T}\Omega_{l}(t)=0$. The condition $(2)$
implies that
$$\min_{\theta\in\mathbb{S}^1}\left(s\left(\frac{\kappa}{s^3}\right)^{\frac{l}{l+2}}\right)(\theta,t)$$
is increasing. Indeed
$$\left(\min_{\theta\in\mathbb{S}^1}s\left(\frac{\kappa}{s^3}\right)^{\frac{l}{l+2}}(\theta,t)\right)^{\frac{p}{p+2}\frac{l+2}{l}}
=\min_{\theta\in\mathbb{S}^1}\left(s^{\frac{p}{p+2}\frac{l+2}{l}}\left(\frac{\kappa}{s^3}\right)^{\frac{p}{p+2}}\right)(\theta,t),$$
therefore the claim follows from  Proposition \ref{cor: strengthned speed}.
\\ We now proceed to prove the existence of such an $l.$ Solving
$\frac{p}{p+2}\frac{l+2}{l}\leq \frac{2p}{p+1}$ implies $l\geq
\frac{2p+2}{p+3}.$  Let $$l:=\frac{2p+2}{p+3}$$ and note that it
satisfies both conditions $(1)$ and $(2)$.
\\ We further remark that
 \begin{equation}\label{ie: trick}
\min_{\theta\in\mathbb{S}^1}\left(s\left(\frac{\kappa}{s^3}\right)^{\frac{l}{2+l}}\right)(\theta,t)\int_{\mathbb{S}^1}\frac{1}{\kappa}\, d\theta\leq
 \Omega_{l}(t)=\int_{\mathbb{S}^1}\frac{s}{\kappa}\left(\frac{\kappa}{s^3}\right)^{\frac{l}{2+l}}\, d\theta,
 \end{equation}
Thus, by taking the limit as $t \to T$ on both sides of inequality ~(\ref{ie: trick}), we obtain
$$\lim_{t\to T}L(t)=\lim_{t\to T}\int_{\mathbb{S}^1}\frac{1}{\kappa}d\theta=0.$$
\end{proof}

\begin{proposition}\label{prop: volume product}
Centered ellipses are the only homothetic solutions to (\ref{e: asli}).
\end{proposition}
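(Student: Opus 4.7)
The plan is to substitute the homothetic ansatz directly into (\ref{e: asli}) and read off the resulting shape constraint. Write $s(\theta,t) = \lambda(t)\,s_0(\theta)$ with $s_0 = s_{\partial K_0}$ and $\lambda(0)=1$; linearity of $\mathfrak{r}[s] = s_{\theta\theta}+s$ in $s$ forces $\mathfrak{r}(\theta,t) = \lambda(t)\,\mathfrak{r}_0(\theta)$, where $\mathfrak{r}_0 := \mathfrak{r}[s_0]$. Plugging these into (\ref{e: asli}) and collecting powers of $\lambda$ yields
\[
\lambda'(t) = -\lambda(t)^{1-\frac{4p}{p+2}}\bigl(s_0(\theta)^{3}\,\mathfrak{r}_0(\theta)\bigr)^{-\frac{p}{p+2}},
\]
and separation of variables forces $s_0^{3}\,\mathfrak{r}_0$ to be independent of $\theta$, i.e.\ $\partial K_0$ must have constant centro-affine curvature.

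Next I would invoke the classical rigidity fact from centro-affine differential geometry: among smooth, strictly convex, centrally symmetric planar curves, those with constant centro-affine curvature are precisely the origin-centered ellipses. Writing the constraint as the second-order ODE $s_{\theta\theta}+s = k/s^{3}$ with $k = s_0^{3}\mathfrak{r}_0$, multiplying by $s_\theta$ yields the first integral $s_\theta^{2} + s^{2} + k/s^{2} \equiv \text{const}$, whose $\pi$-periodic positive solutions are exactly the support functions $\sqrt{a^{2}\cos^{2}(\theta-\theta_{*}) + b^{2}\sin^{2}(\theta-\theta_{*})}$ of centered ellipses with $ab = \sqrt{k}$.

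For the converse, if $K_0$ is a centered ellipse then $s_0^{3}\mathfrak{r}_0 \equiv k$ and the scalar ODE $\lambda'(t) = -k^{-p/(p+2)}\lambda(t)^{1-4p/(p+2)}$ admits a unique positive decreasing solution on a finite maximal interval $[0,T)$. Setting $s(\theta,t) := \lambda(t)\,s_0(\theta)$ then solves (\ref{e: asli}) with the prescribed initial data, and the uniqueness half of Proposition \ref{prop: short time existence and uniqueness} identifies it as the flow's solution; it is manifestly homothetic. The real content of the argument is the centro-affine rigidity step, dispatched by the conservation law above; everything else is routine bookkeeping.
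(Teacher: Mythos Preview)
Your argument is correct. The reduction to ``$\kappa/s^3$ constant'' via the homothetic ansatz is exactly what the paper's second (alternative) proof does; the paper then cites Petty's lemma \cite{petty} for the rigidity step, whereas you supply a direct ODE proof via the first integral $s_\theta^2 + s^2 + k/s^2 = \text{const}$ and the substitution $u=s^2$, which makes your version self-contained in dimension two.

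The paper's \emph{primary} proof takes a different route: it observes that the area product $A(K_t)A(K_t^\circ)$ is scale-invariant, hence constant along a homothetic solution, while by a result of Stancu \cite{S} this product is nondecreasing under the flow and strictly increasing unless the body is a centered ellipse. That argument avoids any pointwise ODE analysis and connects the result to the Santal\'o inequality, at the cost of importing an external monotonicity statement. Your approach is more elementary and gives the explicit shape of all homothetic solutions (with $ab=\sqrt{k}$); the volume-product approach is shorter once the monotonicity is available and situates the proposition within the broader theme of the paper.
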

\begin{proof}
Denote by  $A^{\circ}(t):=A(K^{\circ}_t)$ and observe that $A(t)A^{\circ}(t)$  is
scale-invariant. Therefore for homothetic solutions this
area product remains constant along the flow. Moreover, Proposition 2.2 in \cite{S} states, in a larger generality, that, as long as the flow exists,  the $p$-affine flow does not decrease the area product  $A(t)A^{\circ}(t)$ and it remains constant if and only if the evolving curves are ellipses centered at the origin. The result follows now from the existence of solutions until the extinction time of  evolving convex bodies which are centrally symmetric with the center of symmetry placed at the origin.

Alternately, one can argue that having a homothetic solution to (\ref{e: asli}) is equivalent to $\displaystyle \frac{\kappa}{s^3}$ being constant along the boundary of $K_t$. Then, Petty's lemma, \cite{petty}, shows that the latter is equivalent to $K_t$ being an ellipse
 centered at the origin.
\end{proof}

\section{Affine differential setting}

In what follows, we work in the affine setting  to obtain  a sharp affine isoperimetric inequality along the $p$-flow, Theorem \ref{thm: strong affine isoperimetric inequality}. We will now recall several definitions from affine differential geometry. Let $\gamma:\mathbb{S}^1\to\mathbb{R}^2$ be an embedded strictly convex curve with the curve parameter $\theta$.  Define $\mathfrak{g}(\theta):=[\gamma_{\theta},\gamma_{\theta\theta}]^{1/3}$, where, for two vectors $u, v$ in $\mathbb{R}^2$, $[u, v]$ denotes the determinant of the matrix with rows $u$ and $v$.  The
affine arc-length is then given by
\begin{equation}\label{def: affine arclength}
\mathfrak{s}(\theta):=\int_{0}^{\theta}\mathfrak{g}(\xi)d\xi.
\end{equation}
Furthermore, the affine tangent vector $\mathfrak{t}$, the affine normal vector $\mathfrak{n}$, and the affine curvature are defined, in this order, as follows:
\begin{equation*}
\mathfrak{t}:=\gamma_{\mathfrak{s}},~~~ \mathfrak{n}:=\gamma_{\mathfrak{s}\mathfrak{s}}, ~~~\mu:=[\gamma_{\mathfrak{s}\mathfrak{s}}, \gamma_{\mathfrak{s}\mathfrak{s}\mathfrak{s}}].
\end{equation*}
In the affine coordinate ${\mathfrak{s}}$, the following relations hold:
\begin{align}\label{e: some prop of affine setting}
[\gamma_{\mathfrak{s}},\gamma_{\mathfrak{s}\mathfrak{s}}]&=1 \nonumber\\
[\gamma_{\mathfrak{s}},\gamma_{\mathfrak{s}\mathfrak{s}\mathfrak{s}}]&=0\\
[\gamma_{\mathfrak{s}\mathfrak{s}\mathfrak{s}\mathfrak{s}},\gamma_{\mathfrak{s}}]&=\mu\nonumber.
\end{align}
Moreover, it can be easily verified that $\displaystyle  \kappa_0= \frac{[\gamma_{\theta},\gamma_{\theta\theta}]}{[\gamma,\gamma_{\theta} ]^3}=\frac{[\gamma_{\mathfrak{s}},\gamma_{\mathfrak{s}\mathfrak{s}}]}{[\gamma,\gamma_{\mathfrak{s}} ]^3}.$
Since $[\gamma_{\mathfrak{s}},\gamma_{\mathfrak{s}\mathfrak{s}}]=1$, we conclude that $ \displaystyle \kappa_0=\frac{1}{[\gamma,\gamma_{\mathfrak{s}} ]^3}.$ The affine support function is defined by $\sigma:=\kappa_0^{-1/3}$, see \cite{BA3,NS}.
\\ Let $K_0\in \mathcal{K}_{sym}$. We consider a family $\{K_t\}_t\in \mathcal{K}_{sym}$, and their  associated smooth
 embeddings $x:\mathbb{S}^1\times[0,T)\to \mathbb{R}^2$, which are evolving according  to
 \begin{equation}\label{e: affine def of flow}
 \frac{\partial}{\partial t}x:=\sigma^{1-\frac{3p}{p+2}}\mathfrak{n},~~
 x(.,0)=x_{K_0}(.),~~ x(. ,t)=x_{K_t}(.)
 \end{equation}
 for a fixed $p\geq 1$.
Observe that up to  diffeomorphisms the flow defined in  (\ref{e: affine def of flow}) is equivalent to the flow defined by (\ref{e: faree}).
\begin{lemma} Let $\gamma(t):=\partial K_t$ be the boundary of a convex body $K_t$ evolving under the flow (\ref{e: affine def of flow}). Then the following evolution equations hold:

\begin{enumerate}
  \item $\displaystyle \frac{\partial}{\partial t}\mathfrak{g}=-\frac{2}{3}\mathfrak{g}\sigma^{1-\frac{3p}{p+2}}\mu+\frac{1}{3}\mathfrak{g} \left(\sigma^{1-\frac{3p}{p+2}}\right)_{\mathfrak{s}\mathfrak{s}},$
  \item $ \displaystyle \frac{\partial}{\partial t}\mathfrak{t}=\left[-\frac{1}{3}\sigma^{1-\frac{3p}{p+2}}\mu-\frac{1}{3}\left(\sigma^{1-\frac{3p}{p+2}}
      \right)_{\mathfrak{s}\mathfrak{s}}\right]\mathfrak{t}+
\left(\sigma^{1-\frac{3p}{p+2}}\right)_{\mathfrak{s}}\mathfrak{n},$
  \item $\displaystyle \frac{\partial}{\partial t}\sigma=\sigma^{1-\frac{3p}{p+2}}\left[-\frac{4}{3}+
\left(\frac{p}{p+2}+1\right)\left(1-\frac{3p}{p+2}\right)\frac{\sigma_{\mathfrak{s}}^2}{\sigma}+\frac{p}{p+2}\sigma_{\mathfrak{s}\mathfrak{s}}
\right],$
  \item $ \displaystyle \frac{d}{dt}\Omega_p (t)=\frac{2(p-2)}{p+2}\int_{\gamma} \sigma^{1-\frac{6p}{p+2}}d\mathfrak{s}+\frac{18p^2}{(p+2)^3}\int_{\gamma} \sigma^{-\frac{6p}{p+2}}\sigma_{\mathfrak{s}}^2d\mathfrak{s}.$
\end{enumerate}
\label{lemma:4parts}
\end{lemma}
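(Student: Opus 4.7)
The plan is to derive everything from the geometric evolution $\gamma_t=\phi\,\mathfrak{n}$ with $\phi:=\sigma^{1-\frac{3p}{p+2}}$, together with the affine Frenet--type equations and one key identity tying $\mu$ to $\sigma$. Concretely, from \eqref{e: some prop of affine setting} and $[\gamma_\mathfrak{s},\gamma_{\mathfrak{s}\mathfrak{s}}]=1$ one reads off $\mathfrak{t}_\mathfrak{s}=\mathfrak{n}$ and, using $[\gamma_\mathfrak{s},\mathfrak{n}_\mathfrak{s}]=0$ together with $[\mathfrak{n},\mathfrak{n}_\mathfrak{s}]=\mu$, one obtains $\mathfrak{n}_\mathfrak{s}=-\mu\,\mathfrak{t}$. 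Differentiating $\sigma=[\gamma,\mathfrak{t}]$ in $\mathfrak{s}$ gives $\sigma_\mathfrak{s}=[\gamma,\mathfrak{n}]$ and a second differentiation yields the pivotal affine support relation
\begin{equation*}
\sigma_{\mathfrak{s}\mathfrak{s}}+\mu\,\sigma=1,
\end{equation*}
which will be used repeatedly to eliminate the affine curvature $\mu$.

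For parts (1) and (2), I would write $\gamma_\theta=\mathfrak{g}\mathfrak{t}$, compute $(\gamma_\theta)_t=\phi_\theta\mathfrak{n}-\phi\mu\mathfrak{g}\mathfrak{t}$ from $\gamma_t=\phi\mathfrak{n}$ and $\mathfrak{n}_\theta=\mathfrak{g}\mathfrak{n}_\mathfrak{s}=-\mu\mathfrak{g}\mathfrak{t}$, and similarly compute $(\gamma_{\theta\theta})_t$. Plugging into $\partial_t\mathfrak{g}^3=\partial_t[\gamma_\theta,\gamma_{\theta\theta}]$ and converting $\theta$-derivatives by $\partial_\theta=\mathfrak{g}\partial_\mathfrak{s}$ gives (1). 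Formula (2) then follows by differentiating the identity $\mathfrak{t}=\gamma_\theta/\mathfrak{g}$ in $t$ and substituting (1); the tangential coefficient simplifies precisely to $-\tfrac13\phi\mu-\tfrac13\phi_{\mathfrak{s}\mathfrak{s}}$ after the $-\phi\mu$ from $(\gamma_\theta)_t/\mathfrak{g}$ combines with the $-\mathfrak{g}_t/\mathfrak{g}$ correction.

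For part (3), differentiate $\sigma=[\gamma,\mathfrak{t}]$ in $t$ at fixed $\theta$, using $\gamma_t=\phi\mathfrak{n}$, the expression for $\mathfrak{t}_t$ from (2), and the readings $[\mathfrak{n},\mathfrak{t}]=-1$, $[\gamma,\mathfrak{n}]=\sigma_\mathfrak{s}$, $[\gamma,\mathfrak{t}]=\sigma$. This produces
\begin{equation*}
\sigma_t=-\phi+\phi_\mathfrak{s}\sigma_\mathfrak{s}-\tfrac13\phi\mu\sigma-\tfrac13\sigma\phi_{\mathfrak{s}\mathfrak{s}}.
\end{equation*}
Expanding $\phi_\mathfrak{s}$ and $\phi_{\mathfrak{s}\mathfrak{s}}$ with $\phi=\sigma^{1-3p/(p+2)}$, and then using $\mu\sigma=1-\sigma_{\mathfrak{s}\mathfrak{s}}$ to convert the $\mu$ term into $\sigma$ and $\sigma_{\mathfrak{s}\mathfrak{s}}$ contributions, the bookkeeping (after using $\tfrac13+\tfrac13(1-\alpha)=\tfrac{p}{p+2}$ with $\alpha:=1-\tfrac{3p}{p+2}$) lands on the stated coefficients $-\tfrac43$, $(\tfrac{p}{p+2}+1)(1-\tfrac{3p}{p+2})$ and $\tfrac{p}{p+2}$.

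For part (4), first rewrite $\Omega_p$ in affine form. Using $\sigma=s\kappa^{-1/3}$ and $d\mathfrak{s}=\mathfrak{r}^{2/3}d\theta=\kappa^{-2/3}d\theta$ (since $[\gamma_\theta,\gamma_{\theta\theta}]=\mathfrak{r}^2$ under the Gauss parametrization), the exponents match and one obtains $\Omega_p(t)=\int_\gamma \sigma^{\alpha}\,d\mathfrak{s}$. Differentiate via
\begin{equation*}
\frac{d}{dt}\Omega_p=\int_\gamma \alpha\sigma^{\alpha-1}\sigma_t\,d\mathfrak{s}+\int_\gamma\sigma^\alpha\!\left(-\tfrac23\phi\mu+\tfrac13\phi_{\mathfrak{s}\mathfrak{s}}\right)d\mathfrak{s},
\end{equation*}
insert (3) for $\sigma_t$ and integrate by parts on $\mathbb{S}^1$ (no boundary terms) to move $\sigma_{\mathfrak{s}\mathfrak{s}}$ onto pairs of $\sigma_\mathfrak{s}$, then substitute $\mu\sigma=1-\sigma_{\mathfrak{s}\mathfrak{s}}$ a second time to clear residual $\mu$'s. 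Only two integrands survive, $\sigma^{2\alpha-1}=\sigma^{1-\frac{6p}{p+2}}$ and $\sigma^{2\alpha-2}\sigma_\mathfrak{s}^2=\sigma^{-\frac{6p}{p+2}}\sigma_\mathfrak{s}^2$. The main obstacle is precisely this last step: the bookkeeping must be done carefully so that all $\sigma_{\mathfrak{s}\mathfrak{s}}$ cancel and the final numerical coefficients collapse to $\tfrac{2(p-2)}{p+2}$ and $\tfrac{18p^2}{(p+2)^3}$; tracking signs through the integration by parts (which flips the $\sigma_\mathfrak{s}^2$ sign) and through the two uses of $\mu\sigma=1-\sigma_{\mathfrak{s}\mathfrak{s}}$ is where all the subtlety lies.
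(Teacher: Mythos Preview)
Your proposal is correct and follows essentially the same route as the paper: compute $\partial_t\mathfrak{g}^3$ from the bracket $[\gamma_\theta,\gamma_{\theta\theta}]$, obtain $\mathfrak{t}_t$ via the commutation of $\partial_t$ and $\partial_\mathfrak{s}$ (your quotient rule on $\mathfrak{t}=\gamma_\theta/\mathfrak{g}$ is exactly the paper's commutator identity $\partial_t\partial_\mathfrak{s}=\partial_\mathfrak{s}\partial_t-(\mathfrak{g}_t/\mathfrak{g})\partial_\mathfrak{s}$ applied to $\gamma$), differentiate $\sigma=[\gamma,\mathfrak{t}]$ and simplify with $\sigma_{\mathfrak{s}\mathfrak{s}}+\mu\sigma=1$, and finally differentiate $\Omega_p=\int_\gamma\sigma^{1-3p/(p+2)}d\mathfrak{s}$ using parts (1) and (3) followed by integration by parts. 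The intermediate expression $\sigma_t=-\phi+\phi_\mathfrak{s}\sigma_\mathfrak{s}-\tfrac13\phi\mu\sigma-\tfrac13\sigma\phi_{\mathfrak{s}\mathfrak{s}}$ and the affine rewriting $\Omega_p=\int\sigma^\alpha d\mathfrak{s}$ via $\sigma=s\kappa^{-1/3}$, $d\mathfrak{s}=\mathfrak{r}^{2/3}d\theta$ match the paper's computations line for line.
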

\begin{proof}
To prove the lemma we will use repeatedly equations (\ref{e: some prop of affine setting}) without further mention.
\begin{align*}
\frac{\partial}{\partial t}\mathfrak{g}^3&=\frac{\partial}{\partial t}[\gamma_{\theta},\gamma_{\theta\theta}]=[\frac{\partial}{\partial t}\gamma_{\theta},\gamma_{\theta\theta}]+[\gamma_{\theta},\frac{\partial}{\partial t}\gamma_{\theta\theta}].\\
\end{align*}
We have that
\begin{align*}
\left[\frac{\partial}{\partial t}\gamma_{\theta},\gamma_{\theta\theta}\right]&=\left[\frac{\partial}{\partial \theta}\left(\sigma^{1-\frac{3p}{p+2}}\gamma_{\mathfrak{s}\mathfrak{s}}\right),\gamma_{\theta\theta}\right]\\
&=\left[\mathfrak{g}\frac{\partial}{\partial \mathfrak{s}}\left(\sigma^{1-\frac{3p}{p+2}}\gamma_{\mathfrak{s}\mathfrak{s}}\right),\gamma_{\theta\theta}\right]\\
&=\mathfrak{g}\left[\left(\sigma^{1-\frac{3p}{p+2}}\right)_{\mathfrak{s}}\gamma_{\mathfrak{s}\mathfrak{s}}+
\sigma^{1-\frac{3p}{p+2}}\gamma_{\mathfrak{s}\mathfrak{s}\mathfrak{s}},\gamma_{\theta\theta}\right].
\end{align*}
Since
$\displaystyle \frac{\partial^2}{\partial\theta^2}=\mathfrak{g}\mathfrak{g}_{\mathfrak{s}}\frac{\partial}{\partial \mathfrak{s}}
+\mathfrak{g}^2\frac{\partial^2}{\partial\mathfrak{s}^2}$, we further have
 $\gamma_{\theta\theta}=\mathfrak{g}^2\gamma_{\mathfrak{s}\mathfrak{s}}+\mathfrak{g}\mathfrak{g}_{\mathfrak{s}}\gamma_{\mathfrak{s}}$
and, therefore,
\begin{align*}
\left[\frac{\partial}{\partial t}\gamma_{\theta},\gamma_{\theta\theta}\right]
&=\mathfrak{g}\left[\left(\sigma^{1-\frac{3p}{p+2}}\right)_{\mathfrak{s}}\gamma_{\mathfrak{s}\mathfrak{s}}+
\sigma^{1-\frac{3p}{p+2}}\gamma_{\mathfrak{s}\mathfrak{s}\mathfrak{s}},\mathfrak{g}^2\gamma_{\mathfrak{s}\mathfrak{s}}
+\mathfrak{g}\mathfrak{g}_{\mathfrak{s}}\gamma_{\mathfrak{s}}\right]\\
&=-\mathfrak{g}^2\mathfrak{g}_{\mathfrak{s}}\left(\sigma^{1-\frac{3p}{p+2}}\right)_{\mathfrak{s}}-
\mathfrak{g}^3\sigma^{1-\frac{3p}{p+2}}\mu.
\end{align*}
On the other hand, we have
\begin{align*}
\left[\gamma_{\theta},\frac{\partial}{\partial t}\gamma_{\theta\theta}\right]&=\left[\mathfrak{g}\gamma_{\mathfrak{s}},
\frac{\partial^2}{\partial\theta^2}\left(\sigma^{1-\frac{3p}{p+2}}\gamma_{\mathfrak{s}\mathfrak{s}}\right)\right]\\
&=\left[\mathfrak{g}\gamma_{\mathfrak{s}},
\mathfrak{g}\mathfrak{g}_{\mathfrak{s}}\frac{\partial}{\partial \mathfrak{s}}\left(\sigma^{1-\frac{3p}{p+2}}\gamma_{\mathfrak{s}\mathfrak{s}}\right)+
\mathfrak{g}^2\frac{\partial^2}{\partial\mathfrak{s}^2}\left(\sigma^{1-\frac{3p}{p+2}}\gamma_{\mathfrak{s}\mathfrak{s}}\right)\right]\\
&=\mathfrak{g}^2\mathfrak{g}_{\mathfrak{s}}\left(\sigma^{1-\frac{3p}{p+2}}\right)_{\mathfrak{s}}+
\mathfrak{g}^3\left(\sigma^{1-\frac{3p}{p+2}}\right)_{\mathfrak{s}\mathfrak{s}}-\mathfrak{g}^3\sigma^{1-\frac{3p}{p+2}}\mu.
\end{align*}
Hence, we conclude that
$$\frac{\partial}{\partial t}\mathfrak{g}^3=\mathfrak{g}^3\left(\sigma^{1-\frac{3p}{p+2}}\right)_{\mathfrak{s}\mathfrak{s}}-
2\mathfrak{g}^3\sigma^{1-\frac{3p}{p+2}}\mu,$$
which verifies our first claim.\\
To prove the second claim, we observe that
\begin{equation}\label{e: commute}
\frac{\partial}{\partial t}\frac{\partial}{\partial \mathfrak{s}}=\frac{\partial}{\partial\mathfrak{s}}\frac{\partial}{\partial t}-
\frac{1}{\mathfrak{g}}\frac{\partial\mathfrak{g}}{\partial t}\frac{\partial}{\partial \mathfrak{s}}.
\end{equation}
By (\ref{e: commute}), we get
\begin{align*}
\frac{\partial}{\partial t}\mathfrak{t}&=\frac{\partial}{\partial t}\frac{\partial}{\partial \mathfrak{s}}\gamma\\
&=\frac{\partial}{\partial \mathfrak{s}}\left(\sigma^{1-\frac{3p}{p+2}}\gamma_{\mathfrak{s}\mathfrak{s}}\right)+\left(\frac{2}{3}\sigma^{1-\frac{3p}{p+2}}\mu-\frac{1}{3} \left(\sigma^{1-\frac{3p}{p+2}}\right)_{\mathfrak{s}\mathfrak{s}}\right)\mathfrak{t}\\
&=\left(\sigma^{1-\frac{3p}{p+2}}\right)_{\mathfrak{s}}\mathfrak{n}+\sigma^{1-\frac{3p}{p+2}}\gamma_{\mathfrak{s}\mathfrak{s}\mathfrak{s}}
+\left(\frac{2}{3}\sigma^{1-\frac{3p}{p+2}}\mu-\frac{1}{3} \left(\sigma^{1-\frac{3p}{p+2}}\right)_{\mathfrak{s}\mathfrak{s}}\right)\mathfrak{t}.
\end{align*}
We note that $\gamma_{\mathfrak{s}\mathfrak{s}\mathfrak{s}}=-\mu \gamma_{\mathfrak{s}}$ ending the proof of the second claim.\\
We now proceed to prove the third claim with
\begin{align*}
\frac{\partial}{\partial t}\sigma=\frac{\partial}{\partial t}\left[\gamma,\gamma_{\mathfrak{s}}\right]=\left[\frac{\partial}{\partial t}\gamma, \gamma_{\mathfrak{s}}\right]+\left[\gamma, \frac{\partial}{\partial t}\gamma_{\mathfrak{s}} \right].
\end{align*}
By the evolution equation (\ref{e: affine def of flow}), the evolution equation for $\mathfrak{t}$, and the identities $\sigma=[\gamma,\gamma_{\mathfrak{s}}]$ and $\sigma_{\mathfrak{s}}=[\gamma,\gamma_{\mathfrak{s}\mathfrak{s}}]$, we get that
\begin{align*}
\frac{\partial}{\partial t}\sigma&=\left[\sigma^{1-\frac{3p}{p+2}}\gamma_{\mathfrak{s}\mathfrak{s}}, \gamma_{\mathfrak{s}}\right]+\left[\gamma, \left(-\frac{1}{3}\sigma^{1-\frac{3p}{p+2}}\mu-
\frac{1}{3}\left(\sigma^{1-\frac{3p}{p+2}}\right)_{\mathfrak{s}\mathfrak{s}}\right)\gamma_\mathfrak{s}
+\left(\sigma^{1-\frac{3p}{p+2}}\right)_{\mathfrak{s}}\gamma_{\mathfrak{s}\mathfrak{s}}\right]\\
&=-\sigma^{1-\frac{3p}{p+2}}-\frac{1}{3}\sigma^{2-\frac{3p}{p+2}}\mu-
\frac{1}{3}\left(\sigma^{1-\frac{3p}{p+2}}\right)_{\mathfrak{s}\mathfrak{s}}\sigma+
\left(\sigma^{1-\frac{3p}{p+2}}\right)_{\mathfrak{s}}\sigma_{\mathfrak{s}}\\
&=-\sigma^{1-\frac{3p}{p+2}}-\frac{1}{3}\sigma^{2-\frac{3p}{p+2}}\mu+
\left(-\frac{1}{3}+\frac{p}{p+2}\right)\sigma^{1-\frac{3p}{p+2}}\sigma_{\mathfrak{s}\mathfrak{s}}\\
&~~~~+\left(1-\frac{3p}{p+2}\right)\frac{p}{p+2}\sigma^{-\frac{3p}{p+2}}\sigma_{\mathfrak{s}}^2+
\left(\sigma^{1-\frac{3p}{p+2}}\right)_{\mathfrak{s}}\sigma_{\mathfrak{s}}.
\end{align*}
Observe that $\sigma_{\mathfrak{s}\mathfrak{s}}+\sigma\mu=1$, and apply it to the second and third term of last sum, to obtain
\begin{align*}
\frac{\partial}{\partial t}\sigma&=-\frac{4}{3}\sigma^{1-\frac{3p}{p+2}}+
\frac{p}{p+2}\sigma^{1-\frac{3p}{p+2}}\sigma_{\mathfrak{s}\mathfrak{s}}+
\frac{p}{p+2}\left(1-\frac{3p}{p+2}\right)\sigma^{-\frac{3p}{p+2}}\sigma_{\mathfrak{s}}^2+
\left(\sigma^{1-\frac{3p}{p+2}}\right)_{\mathfrak{s}}\sigma_{\mathfrak{s}}\\
&=-\frac{4}{3}\sigma^{1-\frac{3p}{p+2}}+\left(\frac{p}{p+2}+1\right)
\left(1-\frac{3p}{p+2}\right)\sigma^{-\frac{3p}{p+2}}\sigma_{\mathfrak{s}}^2
+\frac{p}{p+2}\left(\sigma^{1-\frac{3p}{p+2}}\right)\sigma_{\mathfrak{s}\mathfrak{s}},
\end{align*}
as claimed.\\
For the last claim of the lemma, consider
\begin{align*}
\frac{d}{d t}\Omega_p (t)&=\frac{\partial}{\partial t}\int_{\gamma} \sigma^{1-\frac{3p}{p+2}}d\mathfrak{s}\\
&=\int_{\gamma} \frac{\partial}{\partial t}\left(\sigma^{1-\frac{3p}{p+2}}\right)d\mathfrak{s}+\int_{\gamma} \sigma^{1-\frac{3p}{p+2}}\frac{\partial}{\partial t}d\mathfrak{s}.
\end{align*}
Using the previous part $(3)$ of the lemma, and integration by parts, we obtain
\begin{align*}
\int_{\gamma} \frac{\partial}{\partial t}\left(\sigma^{1-\frac{3p}{p+2}}\right)d\mathfrak{s}&=
\left(\frac{4p}{p+2}-\frac{4}{3}\right)\int_{\gamma} \sigma^{1-\frac{6p}{p+2}}d\mathfrak{s}+\left(\frac{p}{p+2}+1\right)
\left(1-\frac{3p}{p+2}\right)^2\int_{\gamma}\sigma^{-\frac{6p}{p+2}}\sigma_{\mathfrak{s}}^2d\mathfrak{s}\\
&~~~~+\frac{p}{p+2}\left(1-\frac{3p}{p+2}\right)\int_{\gamma}\sigma^{1-\frac{6p}{p+2}}\sigma_{\mathfrak{s}\mathfrak{s}}d\mathfrak{s}\\
&=\left(\frac{4p}{p+2}-\frac{4}{3}\right)\int_{\gamma} \sigma^{1-\frac{6p}{p+2}}d\mathfrak{s}+\left(\frac{p}{p+2}+1\right)
\left(1-\frac{3p}{p+2}\right)^2\int_{\gamma}\sigma^{-\frac{6p}{p+2}}\sigma_{\mathfrak{s}}^2d\mathfrak{s}\\
&~~~~+\frac{p}{p+2}\left(1-\frac{3p}{p+2}\right)\left(\frac{6p}{p+2}-1\right)\int_{\gamma}\sigma^{-\frac{6p}{p+2}}\sigma_{\mathfrak{s}}^2
d\mathfrak{s}.
\end{align*}
On the other hand, (\ref{def: affine arclength}) gives
 $$d\mathfrak{s}=\mathfrak{g}d\theta$$
thus, by part $(1)$, we get
 $$\frac{\partial}{\partial t}d\mathfrak{s}=\left[-\frac{2}{3}\sigma^{1-\frac{3p}{p+2}}\mu+\frac{1}{3} \left(\sigma^{1-\frac{3p}{p+2}}\right)_{\mathfrak{s}\mathfrak{s}}\right]d\mathfrak{s}.$$
 This implies
 \begin{align*}
 \int_{\gamma}\sigma^{1-\frac{3p}{p+2}}\frac{\partial}{\partial t}d\mathfrak{s}&=\int_{\gamma} \sigma^{1-\frac{3p}{p+2}}\left[-\frac{2}{3}\sigma^{1-\frac{3p}{p+2}}\mu+\frac{1}{3} \left(\sigma^{1-\frac{3p}{p+2}}\right)_{\mathfrak{s}\mathfrak{s}}\right]d\mathfrak{s}\\
 &=-\frac{2}{3}\int_{\gamma} \sigma^{1-\frac{6p}{p+2}}(1-\sigma_{\mathfrak{s}\mathfrak{s}})d\mathfrak{s}-
 \frac{1}{3}\left({1-\frac{3p}{p+2}}\right)^2 \int_{\gamma} \sigma^{-\frac{6p}{p+2}}\sigma_{\mathfrak{s}}^2d\mathfrak{s}\\
 &=-\frac{2}{3}\int_{\gamma} \sigma^{1-\frac{6p}{p+2}}d\mathfrak{s}+\left[\frac{2}{3}\left(\frac{6p}{p+2}-1\right)
 -\frac{1}{3}\left({1-\frac{3p}{p+2}}\right)^2\right] \int_{\gamma} \sigma^{-\frac{6p}{p+2}}\sigma_{\mathfrak{s}}^2d\mathfrak{s}.
 \end{align*}
 Setting
 \begin{align*}
 Q:&=\left(\frac{p}{p+2}+1\right)
\left(1-\frac{3p}{p+2}\right)^2-\frac{p}{p+2}\left(\frac{3p}{p+2}-1\right)\left(\frac{6p}{p+2}-1\right)
\\ &~~~~+\frac{2}{3}\left(\frac{6p}{p+2}-1\right)
 -\frac{1}{3}\left({1-\frac{3p}{p+2}}\right)^2=\frac{18p^2}{(p+2)^3},
\end{align*}
 and combining the above equations, we finally acquire that
 \begin{align*}
 \frac{d}{dt}\Omega_p (t)&=\frac{2(p-2)}{p+2}\int_{\gamma} \sigma^{1-\frac{6p}{p+2}}d\mathfrak{s}+Q\int_{\gamma} \sigma^{-\frac{6p}{p+2}}\sigma_{\mathfrak{s}}^2d\mathfrak{s}.
 \end{align*}
\end{proof}
Now, we proceed to strengthen inequality ~(\ref{ie: alina}). Let
$K$ and $L$ be two convex bodies with support functions $s$ and $h$,
respectively. Then, the mixed volume of $K$ and $L$ is defined by
$$V[s,h]=\int_{\mathbb{S}^1}s\mathfrak{r}[h]d\theta=
\int_{\mathbb{S}^1}h\mathfrak{r}[s]d\theta.$$

By Minkowski's mixed volume
inequality \cite{schneider}, we have
\begin{equation}\label{ie: original mixed volume}
V^2[h,s]\ge V[s,s]V[h,h].
\end{equation}
More interestingly, inequality (\ref{ie: original mixed volume}) still holds if $h$ is an arbitrary  function in $ \mathcal{C}^2(\mathbb{S}^1)$. Indeed, assuming that $h$ is not the support function of some convex body, for a large positive constant $c$, the sum $h+cs$ is a support function and we obtain,  due to the linearity of mixed volumes,
$$0 \leq V^2[h+cs,s]- V[h+cs,h+cs]V[s,s]=V^2[h,s]- V[h,h]V[s,s].$$
The following Proposition, stated here only for $n=2$, is proved in \cite{S} for all dimensions.
Using our method in this section, we prove a stronger version of the planar inequality in Theorem \ref{thm: strong affine isoperimetric inequality}.
\begin{proposition}\label{thm: second affine inequality}
Let  $p\ge 1$, as $K_t$ evolves under (\ref{e: asli}).
Then we have
\begin{equation} \label{ie: alina}
\frac{d}{dt}\Omega_p(t)\geq \frac{p-2}{p+2}\frac{\Omega_p^2(t)}{A(t)},
\end{equation}
with equality if and only if $K_t$ is an origin centered ellipse.
\end{proposition}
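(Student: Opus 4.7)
The plan is to combine the evolution identity from Lemma~\ref{lemma:4parts}(4) with a Cauchy--Schwarz inequality in the affine arclength parameter. Abbreviate $I_1 := \int_\gamma \sigma^{1-\frac{6p}{p+2}}\,d\mathfrak{s}$ and $I_2 := \int_\gamma \sigma^{-\frac{6p}{p+2}}\sigma_\mathfrak{s}^2\,d\mathfrak{s}$, so that Lemma~\ref{lemma:4parts}(4) reads
\begin{equation*}
\frac{d\Omega_p}{dt} = \frac{2(p-2)}{p+2}\,I_1 + \frac{18p^2}{(p+2)^3}\,I_2.
\end{equation*}
A short calculation using $\sigma=s\mathfrak{r}^{1/3}$ and $d\mathfrak{s}=\mathfrak{r}^{2/3}\,d\theta$ shows that $2A = \int_\gamma \sigma\,d\mathfrak{s}$ and $\Omega_p = \int_\gamma \sigma^{1-\frac{3p}{p+2}}\,d\mathfrak{s}$; splitting the latter integrand as $\sigma^{1/2}\cdot\sigma^{\frac{1}{2}-\frac{3p}{p+2}}$ and applying Cauchy--Schwarz yields
\begin{equation*}
\Omega_p^2 \;\le\; 2A\,I_1,
\end{equation*}
with equality exactly when $\sigma$ is constant on $\partial K_t$.

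Rearranging the evolution identity gives
\begin{equation*}
\frac{d\Omega_p}{dt}-\frac{p-2}{p+2}\,\frac{\Omega_p^2}{A} \;=\; \frac{p-2}{p+2}\!\left(2I_1-\frac{\Omega_p^2}{A}\right) + \frac{18p^2}{(p+2)^3}\,I_2.
\end{equation*}
For $p\ge 2$, both summands on the right are manifestly non-negative (from Cauchy--Schwarz and $I_2\ge 0$), and the claim follows at once. The main obstacle is the range $1\le p<2$, where $(p-2)/(p+2)<0$ makes the first summand non-positive, so the entire negative contribution has to be absorbed by the $I_2$-term. To handle this range, my plan is to use the affine identity $\sigma_{\mathfrak{s}\mathfrak{s}}+\sigma\mu=1$ together with integration by parts to rewrite $I_1 = \frac{5p-2}{p+2}\,I_2 + \int_\gamma \sigma^{2-\frac{6p}{p+2}}\mu\,d\mathfrak{s}$, and then to estimate the remaining $\mu$-integral via Minkowski's mixed volume inequality $V^2[s,h]\ge V[s,s]V[h,h]$ for a suitably chosen $h\in C^2(\mathbb{S}^1)$; the central symmetry of $K_t$ given by Lemma~\ref{lem: symmetry preserved} restricts the relevant Fourier modes of $\sigma$ on the affine-arclength circle and supplies the Poincar\'{e}-type constant needed for the $I_2$-contribution to balance the negative first summand for the whole range $p\ge 1$.

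The equality case is tracked by noting that equality in the Cauchy--Schwarz step forces $\sigma$ to be constant along $\partial K_t$; by Petty's lemma this characterizes ellipses centered at the origin, and for such $K_t$ one has $I_2=0$ with both sides of~(\ref{ie: alina}) coinciding, as required.
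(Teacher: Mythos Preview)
Your plan is correct and coincides with the paper's own argument (carried out in the proof of Theorem~\ref{thm: strong affine isoperimetric inequality}, which in particular establishes this Proposition): for $p\ge 2$ the paper uses exactly your Cauchy--Schwarz/H\"older step $\Omega_p^2\le 2A\,I_1$, and for $1\le p<2$ it likewise rewrites $I_1$ via $\sigma_{\mathfrak{s}\mathfrak{s}}+\sigma\mu=1$ and integration by parts, then bounds $\int_\gamma\sigma^{2-\frac{6p}{p+2}}\mu\,d\mathfrak{s}$ by Minkowski's mixed volume inequality with the explicit choice $h=s(\kappa/s^3)^{p/(p+2)}$, obtaining inequality~(\ref{ie: mixed 2}).

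One correction worth noting: central symmetry plays no role here. The Minkowski step alone produces the Wirtinger-type estimate
\[
\int_\gamma\sigma^{2-\frac{6p}{p+2}}\mu\,d\mathfrak{s}\;\le\;\frac{\Omega_p^2}{2A}+\Bigl(1-\tfrac{3p}{p+2}\Bigr)^{2}I_2,
\]
and substituting this back yields a residual $I_2$-coefficient equal to $\dfrac{18(p-1)p^2}{(p+2)^3}\ge 0$ for every $p\ge 1$, so the negative first summand is absorbed without any appeal to Fourier modes or Lemma~\ref{lem: symmetry preserved}. You can simply drop that clause from your argument.
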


\begin{theorem}\label{thm: strong affine isoperimetric inequality}
The following strong affine isoperimetric inequalities hold. \\
If $1\leq p\leq 2$, then
\begin{align}\label{e: strong affine isoperimetric inequality1}
\frac{d}{dt}\Omega_p(t)\geq \frac{p-2}{p+2}\frac{\Omega_p^2}{A}+\frac{18(p-1)p^2}{(p+2)^3}\int_{\gamma} \sigma^{-\frac{6p}{p+2}}\sigma_{\mathfrak{s}}^2d\mathfrak{s},
\end{align}
while, if $p\ge 2$, we then have
\begin{align}\label{e: strong affine isoperimetric inequality2}
\frac{d}{dt}\Omega_p(t)\geq  \frac{p-2}{p+2}\frac{\Omega_p^2}{A}+\frac{18p^2}{(p+2)^3}\int_{\gamma} \sigma^{-\frac{6p}{p+2}}\sigma_{\mathfrak{s}}^2d\mathfrak{s}.
\end{align}
\end{theorem}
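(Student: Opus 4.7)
My plan is to start from the explicit expression for $\frac{d}{dt}\Omega_p(t)$ in Lemma~\ref{lemma:4parts}(4) and to control the leading integral $\int_\gamma \sigma^{1-\frac{6p}{p+2}}\,d\mathfrak{s}$ in terms of $\Omega_p^2/A$. Two elementary identities will be used throughout: $2A = \int_\gamma \sigma\,d\mathfrak{s}$, which follows from $\sigma\,d\mathfrak{s} = s\mathfrak{r}\,d\theta$ combined with $A = \frac{1}{2}\int s\mathfrak{r}\,d\theta$, together with $\Omega_p = \int_\gamma \sigma^{1-\frac{3p}{p+2}}\,d\mathfrak{s}$. Since the coefficient $\frac{p-2}{p+2}$ in Lemma~\ref{lemma:4parts}(4) switches sign at $p=2$, the two ranges demand inequalities of opposite direction, which explains the different forms of (\ref{e: strong affine isoperimetric inequality1}) and (\ref{e: strong affine isoperimetric inequality2}).

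For $p \geq 2$, a direct Cauchy--Schwarz suffices: writing $\sigma^{1-\frac{3p}{p+2}} = \sigma^{1/2}\cdot \sigma^{\frac{1}{2}\left(1-\frac{6p}{p+2}\right)}$ yields
\begin{equation*}
\Omega_p^2 \leq \int_\gamma \sigma\,d\mathfrak{s}\cdot \int_\gamma \sigma^{1-\frac{6p}{p+2}}\,d\mathfrak{s} = 2A\int_\gamma \sigma^{1-\frac{6p}{p+2}}\,d\mathfrak{s}.
\end{equation*}
Multiplication by the nonnegative factor $\frac{p-2}{p+2}$, followed by addition of the gradient term from Lemma~\ref{lemma:4parts}(4), produces (\ref{e: strong affine isoperimetric inequality2}).

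For $1 \leq p \leq 2$, Cauchy--Schwarz points the wrong way, so I invoke the Minkowski mixed volume inequality $V[h,s]^2 \geq V[s,s]V[h,h]$, valid for every $h\in C^2(\mathbb{S}^1)$ with $V[h,h] = \int h(h+h_{\theta\theta})\,d\theta$ and $V[s,s] = 2A$. The natural choice is $h = s\phi$ with $\phi := \sigma^{-\frac{3p}{p+2}}$, which yields $V[h,s] = \int s\phi\mathfrak{r}\,d\theta = \Omega_p$. Expanding $V[h,h] = \int h^2\,d\theta - \int h_\theta^2\,d\theta$ with $h = s\phi$ and integrating the $\int ss_{\theta\theta}\phi^2\,d\theta$ piece by parts to cancel the cross term $2\int ss_\theta\phi\phi_\theta\,d\theta$ gives $V[h,h] = \int s\mathfrak{r}\,\phi^2\,d\theta - \int s^2\phi_\theta^2\,d\theta$. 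Converting to affine arclength via $s\mathfrak{r}\,d\theta = \sigma\,d\mathfrak{s}$, $s^2\mathfrak{r}^{2/3} = \sigma^2$, and $\phi_\theta = \mathfrak{r}^{2/3}\phi_\mathfrak{s}$ reduces this to
\begin{equation*}
V[h,h] = \int_\gamma \sigma^{1-\frac{6p}{p+2}}\,d\mathfrak{s} - \frac{9p^2}{(p+2)^2}\int_\gamma \sigma^{-\frac{6p}{p+2}}\sigma_\mathfrak{s}^2\,d\mathfrak{s}.
\end{equation*}
Minkowski then delivers $\frac{\Omega_p^2}{A} \geq 2V[h,h]$; multiplying by $\frac{p-2}{p+2} \leq 0$ flips the inequality, and upon substitution into Lemma~\ref{lemma:4parts}(4) the gradient-term coefficient collapses to $\frac{18p^2}{(p+2)^3}\bigl[1 + (p-2)\bigr] = \frac{18(p-1)p^2}{(p+2)^3}$, which is (\ref{e: strong affine isoperimetric inequality1}).

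The main technical hurdle is the bookkeeping in the Minkowski step: the cross term $2\int ss_\theta\phi\phi_\theta\,d\theta$ must cancel exactly, and the transition from $\theta$-integrals to $\mathfrak{s}$-integrals must resolve every $\mathfrak{r}$-factor so that the final expression lives purely in the affine invariants $\sigma$ and $\sigma_\mathfrak{s}$. The exponent $-\frac{3p}{p+2}$ in $\phi$ is forced by the requirement $V[h,s] = \Omega_p$, and is essentially the only choice that makes Minkowski's inequality match Lemma~\ref{lemma:4parts}(4).
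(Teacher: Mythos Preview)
Your argument is correct and follows the same route as the paper: H\"older (your Cauchy--Schwarz) for $p\ge2$, and Minkowski's mixed volume inequality with the same choice $h=s(\kappa/s^3)^{p/(p+2)}$ for $1\le p\le2$. The only cosmetic difference is that the paper evaluates $V[h,h]$ via Andrews' identity $\mathfrak{r}[h]/\mathfrak{r}[s]=\sigma^{1-\frac{3p}{p+2}}\mu+(\sigma^{1-\frac{3p}{p+2}})_{\mathfrak{s}\mathfrak{s}}$, obtains an intermediate bound on $\int_\gamma\sigma^{2-\frac{6p}{p+2}}\mu\,d\mathfrak{s}$, and then reintroduces $\mu$ into Lemma~\ref{lemma:4parts}(4) through $\sigma\mu=1-\sigma_{\mathfrak{s}\mathfrak{s}}$, whereas your direct expansion of $V[h,h]$ in $\theta$-coordinates reaches the equivalent bound on $\int_\gamma\sigma^{1-\frac{6p}{p+2}}d\mathfrak{s}$ without the detour through $\mu$.
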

\begin{proof}
To prove the second statement, we note that H\"{o}lder's inequality gives
 $$\int_{\gamma} \sigma^{1-\frac{6p}{p+2}}d\mathfrak{s}=\int_{\mathbb{S}^1}\frac{s}{\kappa}\left(\frac{\kappa}{s^3}\right)^{\frac{2p}{p+2}}d\theta\ge \frac{\Omega_p^2}{2A}$$
 and, thus, part $(4)$ of Lemma \ref{lemma:4parts} implies the affine isoperimetric inequality for $p\geq2.$
 We now proceed to prove the first inequality. By Minkowski's mixed volume inequality (\ref{ie: original mixed volume}), we have
 \begin{align}\label{ie: mixed 1}
 V\left[s\left(\frac{\kappa}{s^3}\right)^{\frac{p}{p+2}}, s\left(\frac{\kappa}{s^3}\right)^{\frac{p}{p+2}}\right]\leq
 \frac{ V\left[s\left(\frac{\kappa}{s^3}\right)^{\frac{p}{p+2}}, s\right]^2}{V[s,s]}.
 \end{align}
 Note that the right hand side of the inequality is precisely $\displaystyle \frac{\Omega_p^2}{2A}$. Using the identity, see \cite{BA3},
 \begin{align*}
 \frac{\mathfrak{r}\left[s\left(\frac{\kappa}{s^3}\right)^{\frac{p}{p+2}}\right]}{\mathfrak{r}[s]}
 &=\frac{s}{\kappa^{\frac{1}{3}}}\left(\frac{\kappa}{s^3}\right)^{\frac{p}{p+2}}\mu
 +\left(\frac{s}{\kappa^{\frac{1}{3}}}\left(\frac{\kappa}{s^3}\right)^{\frac{p}{p+2}}\right)_{\mathfrak{s}\mathfrak{s}}\\
 &=\sigma^{1-\frac{3p}{p+2}}\mu+\left(\sigma^{1-\frac{3p}{p+2}}\right)_{\mathfrak{s}\mathfrak{s}},
 \end{align*}
 we can rewrite the left hand side of  (\ref{ie: mixed 1}) as follows:
 \begin{align}\label{e: rewrite}
  V\left[s\left(\frac{\kappa}{s^3}\right)^{\frac{p}{p+2}}, s\left(\frac{\kappa}{s^3}\right)^{\frac{p}{p+2}}\right]&=\int_{\mathbb{S}^1}s\left(\frac{\kappa}{s^3}\right)^{\frac{p}{p+2}}
  \mathfrak{r}\left[s\left(\frac{\kappa}{s^3}\right)^{\frac{p}{p+2}}\right]d\theta\nonumber\\
  &=\int_{\gamma}\frac{s}{\kappa^{\frac{1}{3}}}\left(\frac{\kappa}{s^3}\right)^{\frac{p}{p+2}}
  \frac{\mathfrak{r}\left[s\left(\frac{\kappa}{s^3}\right)^{\frac{p}{p+2}}\right]}{\mathfrak{r}}d\mathfrak{s}\nonumber\\
  &=\int_{\gamma}\sigma^{1-\frac{3p}{p+2}}\left(\sigma^{1-\frac{3p}{p+2}}\mu+\left(\sigma^{1-\frac{3p}{p+2}}\right)_{\mathfrak{s}\mathfrak{s}}
  \right)
  d\mathfrak{s}\nonumber\\
  &=\int_{\gamma}\sigma^{2-\frac{6p}{p+2}}\mu d\mathfrak{s}-\left(1-\frac{3p}{p+2}\right)^2\int_{\gamma}\sigma^{-\frac{6p}{p+2}}\sigma_{\mathfrak{s}}^2d\mathfrak{s}.
 \end{align}
 Hence, combining equation (\ref{e: rewrite}) and inequality (\ref{ie: mixed 1}), we conclude that
 \begin{align}\label{ie: mixed 2}
 \int_{\gamma}\sigma^{2-\frac{6p}{p+2}}\mu d\mathfrak{s}\leq\frac{\Omega_p^2}{2A}+\left(1-\frac{3p}{p+2}\right)^2\int_{\gamma}\sigma^{-\frac{6p}{p+2}}\sigma_{\mathfrak{s}}^2d\mathfrak{s}.
 \end{align}
 Inequality (\ref{ie: mixed 2}) is a special case of affine-geometric Wirtinger inequality, Lemma 6, \cite{BA3}.
 To finish the proof note also that
 \begin{align*}
 \frac{d}{dt}\Omega_p(t)&=\frac{2(p-2)}{p+2}\int_{\gamma} \sigma^{1-\frac{6p}{p+2}}d\mathfrak{s}+\frac{18p^2}{(p+2)^3}\int_{\gamma} \sigma^{-\frac{6p}{p+2}}\sigma_{\mathfrak{s}}^2d\mathfrak{s}\\
 &=\frac{2(p-2)}{p+2}\int_{\gamma} \sigma^{2-\frac{6p}{p+2}}\left(\frac{1}{\sigma}-\frac{\sigma_{\mathfrak{s}\mathfrak{s}}}{\sigma}\right)d\mathfrak{s}
 +\frac{2(p-2)}{p+2}\int_{\gamma} \sigma^{2-\frac{6p}{p+2}}\frac{\sigma_{\mathfrak{s}\mathfrak{s}}}{\sigma}d\mathfrak{s}
 +\frac{18p^2}{(p+2)^3}\int_{\gamma} \sigma^{-\frac{6p}{p+2}}\sigma_{\mathfrak{s}}^2d\mathfrak{s}\\
 &=\frac{2(p-2)}{p+2}\int_{\gamma} \sigma^{2-\frac{6p}{p+2}}\mu d\mathfrak{s}+
 \left[\frac{2(p-2)}{p+2}\left(\frac{6p}{p+2}-1\right)+\frac{18p^2}{(p+2)^3}\right]\int_{\gamma} \sigma^{-\frac{6p}{p+2}}\sigma_{\mathfrak{s}}^2d\mathfrak{s},
 \end{align*}
which, by inequality (\ref{ie: mixed 2}), implies
 \begin{align*}
 \frac{d}{dt}\Omega_p(t)&\geq \frac{p-2}{p+2}\frac{\Omega_{p}^2}{A}
 +\left[\frac{2(p-2)}{p+2}\left(1-\frac{3p}{p+2}\right)^2+\frac{2(p-2)}{p+2}\left(\frac{6p}{p+2}-1\right)+\frac{18p^2}{(p+2)^3}\right]
 \int_{\gamma} \sigma^{-\frac{6p}{p+2}}\sigma_{\mathfrak{s}}^2d\mathfrak{s}\\
 &=\frac{p-2}{p+2}\frac{\Omega_{p}^2}{A}+\frac{18(p-1)p^2}{(p+2)^3}\int_{\gamma} \sigma^{-\frac{6p}{p+2}}\sigma_{\mathfrak{s}}^2d\mathfrak{s}.
 \end{align*}
\end{proof}

\begin{lemma}\label{lem: p-ratio}
The $p$-affine isoperimetric ratio,
$\frac{\Omega_p^{2+p}(t)}{A^{2-p}(t)}$, is non-decreasing along the
flow (\ref{e: asli}) and remains constant if and only $K_t$ is an origin centered ellipse.
\end{lemma}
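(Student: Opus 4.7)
The plan is to reduce the monotonicity statement to Proposition~\ref{thm: second affine inequality}, since the two assertions turn out to be algebraically equivalent once the area evolution from Lemma~\ref{lem: evolution equations} is substituted in. In other words, the $p$-affine isoperimetric ratio $\Omega_p^{p+2}/A^{2-p}$ has been cooked up precisely so that its logarithmic derivative factors through the sharp lower bound on $d\Omega_p/dt$ already at our disposal.

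First, I would apply the quotient rule to $\Omega_p^{p+2}/A^{2-p}$ and factor out the manifestly positive quantity $(p+2)\Omega_p^{p+1}/A^{2-p}$ to obtain
\begin{align*}
\frac{d}{dt}\!\left(\frac{\Omega_p^{p+2}(t)}{A^{2-p}(t)}\right)
&= \frac{(p+2)\,\Omega_p^{p+1}(t)}{A^{2-p}(t)}
\left(\Omega_p'(t) - \frac{p-2}{p+2}\,\frac{\Omega_p(t)\,A'(t)}{A(t)}\right).
\end{align*}
Next I would substitute $A'(t)=-\Omega_p(t)$ from Lemma~\ref{lem: evolution equations}, which converts the parenthetical expression into
\[
\Omega_p'(t)-\frac{p-2}{p+2}\,\frac{\Omega_p^2(t)}{A(t)},
\]
i.e.\ the left-hand side minus the right-hand side of inequality~(\ref{ie: alina}). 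By Proposition~\ref{thm: second affine inequality} this quantity is non-negative, so the ratio is non-decreasing.

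For the equality characterization, constancy of the ratio on a time interval forces the bracketed expression above to vanish identically on that interval, which is precisely the equality case in~(\ref{ie: alina}); by Proposition~\ref{thm: second affine inequality}, this is equivalent to $K_t$ being an origin-centered ellipse. I do not anticipate any substantive obstacle -- the argument is essentially a rewriting of Proposition~\ref{thm: second affine inequality}. The only point requiring mild attention is that when $p>2$ the exponent $2-p$ is negative and $A^{2-p}\to\infty$ as $A\to 0^+$, but because we have already isolated a strictly positive factor in the displayed equation, the sign of $2-p$ plays no role in the monotonicity conclusion, and the resulting bound is consistent with the $p$-affine isoperimetric inequality $\Omega_p^{p+2}\le 2^{p+2}\pi^{2p} A^{2-p}$ recalled earlier.
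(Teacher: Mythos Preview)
Your proposal is correct and follows essentially the same route as the paper: differentiate the ratio, substitute $A'(t)=-\Omega_p(t)$ from Lemma~\ref{lem: evolution equations}, factor out the positive quantity $\Omega_p^{p+1}/A^{2-p}$, and invoke inequality~(\ref{ie: alina}) from Proposition~\ref{thm: second affine inequality}. If anything, your write-up is slightly more complete than the paper's, since you spell out the equality characterization explicitly rather than leaving it implicit in the citation of~(\ref{ie: alina}).
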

\begin{proof}
\begin{align}\label{e: p-ratio}
\frac{d}{d t}\frac{\Omega_p^{2+p}(t)}{A^{2-p}(t)}&=\frac{(2+p)\Omega_p^{p+1}(t)A^{2-p}(t)\frac{d}{dt}\Omega_p(t)+(2-p)A^{1-p}(t)\Omega_p^{3+p}(t)}{A^{2(2-p)}(t)}
\\ &=\frac{\Omega_p^{1+p}(t)}{A^{2-p}(t)}\left((2+p)\frac{d}{dt}\Omega_p(t)-(p-2)\frac{\Omega_p^2(t)}{A(t)}\right)\ge
0\nonumber,
\end{align}
where we used inequality ~(\ref{ie: alina}) on the last line.
\end{proof}
\begin{corollary}\label{cor: limsup idea} If $K_t$ evolves by (\ref{e: asli}) with extinction time $T$, the following limit holds:
\begin{equation}
 \lim\inf_{t\to T} \frac{\Omega_p^p(t)}{A^{1-p}(t)}\left[(2+p)\frac{d}{dt}\Omega_p(t)-(p-2)\frac{\Omega_p^2(t)}{A(t)}\right]=0.
 \label{eq:sup}
 \end{equation}
\end{corollary}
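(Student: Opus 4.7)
\medskip

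\noindent\textbf{Proof proposal.} The plan is to recognize that the bracketed expression in \eqref{eq:sup} is, up to an explicit prefactor, the time derivative of the $p$-affine isoperimetric ratio $R(t):=\Omega_p^{2+p}(t)/A^{2-p}(t)$, and then to run a simple $L^1$ vs.\ non-integrability argument.

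First, I would rewrite the integrand in a convenient form. From the computation in Lemma \ref{lem: p-ratio},
$$R'(t)=\frac{\Omega_p^{1+p}(t)}{A^{2-p}(t)}\left[(2+p)\frac{d}{dt}\Omega_p(t)-(p-2)\frac{\Omega_p^2(t)}{A(t)}\right].$$
Multiplying through, the quantity whose liminf we want is exactly
$$\frac{\Omega_p^p(t)}{A^{1-p}(t)}\left[(2+p)\frac{d}{dt}\Omega_p(t)-(p-2)\frac{\Omega_p^2(t)}{A(t)}\right]=\frac{A(t)}{\Omega_p(t)}\,R'(t).$$
Because $R'\ge 0$ (Lemma \ref{lem: p-ratio}) and $A,\Omega_p>0$, this quantity is non-negative, so the liminf in \eqref{eq:sup} is at least $0$.

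Next I would bound $R$ from above and use the evolution of $A$. By Lutwak's $p$-affine isoperimetric inequality,
$$R(t)=\frac{\Omega_p^{2+p}(t)}{A^{2-p}(t)}\le 2^{2+p}\pi^{2p}\qquad (p\ge 1).$$
Since $R$ is non-decreasing and bounded, $\lim_{t\to T}R(t)$ exists and is finite, hence
$$\int_0^T R'(t)\,dt=R(T^-)-R(0)<\infty.$$
On the other hand, equation \eqref{e: volume} gives $A'=-\Omega_p$, so $\Omega_p/A=-\tfrac{d}{dt}\log A$, and Lemma \ref{lem: volume goes to zero} (which asserts $A(t)\to 0$) yields
$$\int_0^T\frac{\Omega_p(t)}{A(t)}\,dt=\log\frac{A(0)}{A(T^-)}=+\infty.$$

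Finally I would conclude by contradiction. Suppose the liminf in \eqref{eq:sup} equals some $c>0$. Then there exists $t_0<T$ such that $\frac{A(t)R'(t)}{\Omega_p(t)}\ge c/2$ for every $t\in[t_0,T)$, i.e.\ $R'(t)\ge \frac{c}{2}\,\frac{\Omega_p(t)}{A(t)}$ on $[t_0,T)$. Integrating from $t_0$ to $T$ gives $\int_{t_0}^{T}R'(t)\,dt=\infty$, contradicting the finiteness established above. Hence the liminf is $0$, completing the proof.

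The only step that requires a moment of care is the passage between the bracket in \eqref{eq:sup} and $R'$; everything else is a standard $L^1$-vs-log-divergence comparison driven by $A(t)\to 0$ and the sharp bound on $R(t)$.
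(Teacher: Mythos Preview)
Your proof is correct and essentially identical to the paper's own argument. Both recognize that the bracketed quantity equals $\dfrac{A}{\Omega_p}\,R'(t)$ with $R=\Omega_p^{2+p}/A^{2-p}$ (the paper phrases this as $R'=-\dfrac{d}{dt}\ln A\cdot(\text{quantity})$, which is the same identity since $A'=-\Omega_p$), then use the $p$-affine isoperimetric bound on $R$ together with $A(t)\to 0$ to derive a contradiction from an assumed positive lower bound near $T$.
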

\begin{proof}
By equations ~(\ref{e: volume}) and ~(\ref{e: p-ratio})
\begin{align*}
\frac{d}{d t}\frac{\Omega_p^{2+p}(t)}{A^{2-p}(t)}
&=-\frac{d}{d t}\ln(A(t))\left(\frac{\Omega_p^p(t)}{A^{1-p}(t)}\left[(2+p)\frac{d}{dt}\Omega_p(t)-(p-2)\frac{\Omega_p^2(t)}{A(t)}\right]\right).
\end{align*}
If $$\frac{\Omega_p^p(t)}{A^{1-p}(t)}\left[(2+p)\frac{d}{dt}\Omega_p(t)-(p-2)\frac{\Omega_p^2(t)}{A(t)}\right]\geq\varepsilon$$ in a neighborhood of $T$, then
$$\frac{d}{dt}\frac{\Omega_p^{2+p}(t)}{A^{2-p}(t)}\ge -\varepsilon \frac{d}{d t}\ln(A(t)).$$
Thus,
$$\frac{\Omega_p^{2+p}}{A^{2-p}}(t)\ge \frac{\Omega_p^{2+p}}{A^{2-p}}(t_1)+\varepsilon \ln(A(t_1))-\varepsilon \ln(A(t)),$$
the right hand side goes to infinity as $A(t)$ goes to zero. This contradicts that the left hand side is bounded from above by the $p$-affine isoperimetric inequality.
\end{proof}
\section{Normalized flow}
In this section we study the normalized flows corresponding to the evolution described by (\ref{e: asli}).
We consider the conventional rescaling such that the area enclosed by the normalized curves is $\pi$ by taking
 $$\tilde{s}_t:=\sqrt{\frac{\pi}{A(t)}}\,s_t,~~~\tilde{\kappa}_t:=\sqrt{\frac{A(t)}{\pi}}\,{\kappa}_t.$$   One can also define a new time parameter $$\tau=\int_{0}^t\left(\frac{\pi}{A(K_t)(\xi)}\right)^{\frac{2p}{p+2}}d\xi$$  and can easily verify that
\begin{equation}\label{e: normalized flow}
\frac{\partial }{\partial \tau}\, \tilde{s}=-\tilde{s}\left(\frac{\tilde{\kappa}}{\tilde{s}^3}\right)^{\frac{p}{p+2}}+
\frac{\tilde{s}}{2\pi}\,\tilde{\Omega}_p,
\end{equation}
where $\tilde{\Omega}_p$ stands for the $p$-affine length of $\partial \tilde{K}_t$ having support function $\tilde{s}_t$. More precisely,
$$\tilde{\Omega}_p(\tau):=\Omega_p(\tilde{K}_{\tau})=
\int_{\mathbb{S}^1}\frac{\tilde{s}}{\tilde{\kappa}}\left(\frac{\tilde{\kappa}}{\tilde{s}^3}\right)^{\frac{p}{p+2}}d\theta.$$
However, even in the normalized case, we prefer to work on the finite time interval $[0,T)$.
\begin{corollary} \label{cor: limit of affine support} Let $p$ be a real number, $p>1$, and let $\{t_k\}_k$ be the sequence of times realizing the limit (\ref{eq:sup}) in Corollary \ref{cor: limsup idea}. Then, along the normalized contracting $p$-flow, we have
$$\lim_{t_k\to T}\tilde{\sigma}(t_k)=1.$$
\end{corollary}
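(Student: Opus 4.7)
The plan is to feed the strong affine isoperimetric inequality (Theorem~\ref{thm: strong affine isoperimetric inequality}) into Corollary~\ref{cor: limsup idea} in order to obtain, along $\{t_k\}$, the vanishing of a scale-invariant weighted Dirichlet integral for $\tilde{\sigma}$. A Wirtinger estimate will then force $\tilde{\sigma}$ to become constant, and the area constraint $\int_{\tilde\gamma}\tilde{\sigma}\,d\tilde{\mathfrak{s}}=2\tilde{A}=2\pi$ together with Petty's lemma will pin the constant down to $1$.

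For $p>1$, both prefactors $18(p-1)p^{2}/(p+2)^{3}$ and $18p^{2}/(p+2)^{3}$ in Theorem~\ref{thm: strong affine isoperimetric inequality} are strictly positive, so combining it with Corollary~\ref{cor: limsup idea} yields
$$\lim_{t_k\to T}\frac{\Omega_p^{p}(t_k)}{A^{1-p}(t_k)}\int_{\gamma}\sigma^{-\frac{6p}{p+2}}\sigma_{\mathfrak{s}}^{2}\,d\mathfrak{s}=0.$$
A direct check of the scalings $\tilde{\sigma}=(\pi/A)^{2/3}\sigma$ and $d\tilde{\mathfrak{s}}=(\pi/A)^{1/3}d\mathfrak{s}$ shows that the displayed expression is scale-invariant and equals $\pi^{p-1}\tilde{\Omega}_p^{p}(t_k)\int_{\tilde\gamma}\tilde{\sigma}^{-\frac{6p}{p+2}}\tilde{\sigma}_{\tilde{\mathfrak{s}}}^{2}\,d\tilde{\mathfrak{s}}$. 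Since $\tilde{\Omega}_p$ is non-decreasing by Lemma~\ref{lem: p-ratio} and hence bounded below by a positive initial value, the weighted Dirichlet integral itself tends to $0$ along $\{t_k\}$.

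Next, with the substitution $v:=\tilde{\sigma}^{1-\frac{3p}{p+2}}$ the chain rule gives $v_{\tilde{\mathfrak{s}}}^{2}=\bigl(1-\tfrac{3p}{p+2}\bigr)^{2}\tilde{\sigma}^{-\frac{6p}{p+2}}\tilde{\sigma}_{\tilde{\mathfrak{s}}}^{2}$, so $\int_{\tilde\gamma}v_{\tilde{\mathfrak{s}}}^{2}\,d\tilde{\mathfrak{s}}\to 0$. The classical affine isoperimetric inequality applied to $\tilde{K}_{t_k}$ (with $\tilde{A}=\pi$) bounds the affine length $\tilde{\Omega}_1(t_k)\leq 2\pi$, so the Wirtinger inequality on the closed curve $\tilde\gamma_{t_k}$ produces
$$\int_{\tilde\gamma}(v-\bar{v})^{2}\,d\tilde{\mathfrak{s}}\leq\Bigl(\tfrac{\tilde{\Omega}_1(t_k)}{2\pi}\Bigr)^{2}\int_{\tilde\gamma}v_{\tilde{\mathfrak{s}}}^{2}\,d\tilde{\mathfrak{s}}\longrightarrow 0,$$
with $\bar{v}=\tilde{\Omega}_p(t_k)/\tilde{\Omega}_1(t_k)$. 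Thus $v$, and hence $\tilde{\sigma}$, converges in $L^{2}(d\tilde{\mathfrak{s}})$ (and pointwise a.e.\ along a further subsequence) to a constant $c\ge 0$ along $\{t_k\}$.

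To identify $c=1$, observe that the affine area formula $\int_{\tilde\gamma}\tilde{\sigma}\,d\tilde{\mathfrak{s}}=2\tilde{A}=2\pi$ gives $c\,\tilde{\Omega}_1(t_k)\to 2\pi$, and since $\tilde{\Omega}_1\leq 2\pi$ we already have $c\geq 1$. Extracting a Blaschke subsequential limit $\tilde{K}_\infty$ of area $\pi$ from an $SL(2)$-normalized version of $\{\tilde{K}_{t_k}\}$ (to prevent elongation), $\tilde{\sigma}$ is continuous under Hausdorff convergence of non-degenerate convex bodies, hence $\tilde{\sigma}_{\tilde{K}_\infty}\equiv c$. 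By Petty's lemma (as invoked in Proposition~\ref{prop: volume product}), $\tilde{K}_\infty$ must be an origin-centered ellipse, and origin-centered ellipses of area $\pi$ necessarily have $\tilde{\sigma}\equiv 1$; therefore $c=1$. The main obstacle is precisely this last step: one must arrange an $SL(2)$-normalization compatible with the $SL(2)$-invariance of $\tilde{\sigma}$, establish precompactness in the Hausdorff topology, and verify that the $L^{2}$-limit of $\tilde{\sigma}$ is realized by the limiting body. The rigidity in Petty's lemma and in the equality case of the $1$-affine isoperimetric inequality then close the argument.
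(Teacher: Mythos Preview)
Your first two steps --- feeding Theorem~\ref{thm: strong affine isoperimetric inequality} into Corollary~\ref{cor: limsup idea} to obtain $\int_{\tilde\gamma}(\tilde\sigma^{1-3p/(p+2)})_{\tilde{\mathfrak s}}^2\,d\tilde{\mathfrak s}\to 0$ along $\{t_k\}$ --- are exactly what the paper does. The divergence begins at how the constant limit is reached and identified.

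The paper avoids both Wirtinger and any compactness argument. It simply applies Cauchy--Schwarz to get $\int_{\tilde\gamma}\bigl|(\tilde\sigma^{1-3p/(p+2)})_{\tilde{\mathfrak s}}\bigr|\,d\tilde{\mathfrak s}\le\bigl(\int_{\tilde\gamma}(\tilde\sigma^{1-3p/(p+2)})_{\tilde{\mathfrak s}}^2\,d\tilde{\mathfrak s}\bigr)^{1/2}\tilde\Omega_1^{1/2}\to 0$, so the \emph{oscillation} of $\tilde\sigma^{1-3p/(p+2)}$ (and hence of $\tilde\sigma$, by monotonicity of the power) tends to zero. The constant is then pinned down in one line by Andrews' elementary observation (Lemma~10 of \cite{BA3}) that any smooth convex curve of enclosed area $\pi$ satisfies $\min_{\mathbb S^1}\sigma\le 1\le\max_{\mathbb S^1}\sigma$. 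This gives the \emph{uniform} convergence $\tilde\sigma(t_k)\to 1$ that the subsequent results (e.g.\ the pointwise limit $\tilde\kappa/\tilde s^3\to 1$ in Theorem~\ref{thm: semi main}) require.

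Your route has a genuine gap at the point you yourself flag as ``the main obstacle''. The assertion that ``$\tilde\sigma$ is continuous under Hausdorff convergence of non-degenerate convex bodies'' is false: $\sigma=(s^3/\kappa)^{1/3}$ depends on the curvature, i.e.\ on second derivatives of the support function, and these are not controlled by Hausdorff convergence. Without this, you cannot transfer the $L^2$ limit of $\tilde\sigma$ to the Blaschke limit body $\tilde K_\infty$, so Petty's lemma never engages and the identification $c=1$ (and with it the upper bound $c\le 1$) is unproved. Separately, Wirtinger only yields $L^2$ convergence along a subsequence, which is weaker than the uniform statement of the corollary; you would in any case need the Cauchy--Schwarz/total-variation step above. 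Once you have that step, the $\min/\max$ lemma from \cite{BA3} closes the argument immediately and the whole Blaschke--Petty detour becomes unnecessary.
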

\begin{proof}

Since
$$\frac{1}{\left(\frac{3p}{p+2}-1\right)^2}\int_{\gamma}\left(\sigma^{1-\frac{3p}{p+2}}\right)_{\mathfrak{s}}^2d\mathfrak{s}=\int_{\gamma} \sigma^{-\frac{6p}{p+2}}\sigma_{\mathfrak{s}}^2d\mathfrak{s},$$
by  Theorem \ref{thm: strong affine isoperimetric inequality} and Corollary \ref{cor: limsup idea}, we have
$$0=\lim_{t_k\to T}\frac{\Omega_p^{p}}{A^{1-p}}\left[\frac{d}{dt}\Omega_p(t)-\frac{p-2}{p+2}\frac{\Omega_p^2}{A}\right]\geq \lim_{t_k\to T}\frac{\Omega_p^{p}}{A^{1-p}}\left(\phi(p)\int_{\gamma}\left(\sigma^{1-\frac{3p}{p+2}}\right)_{\mathfrak{s}}^2d\mathfrak{s}\right)\geq 0,$$ where
 $\phi(p):=\left\{
         \begin{array}{ll}
           \frac{9p^2}{2(p+2)(p-1)}, & \hbox{if~} 1<p\leq2 \\
           \frac{9p^2}{2(p+2)(p-1)^2}, & \hbox{if~} p\ge2.
         \end{array}
       \right.$

As, by Lemma \ref{lem: p-ratio}, the $p$-affine length $\tilde{\Omega}_p$ is increasing along the normalized flow, we conclude that, for any $p>1$,
 $$\lim_{t_k\to T}\int_{\tilde{\gamma}}\left(\tilde{\sigma}^{1-\frac{3p}{p+2}}\right)_{\tilde{\mathfrak{s}}}^2d\tilde{\mathfrak{s}}=0.$$
 We note that, for any $\theta_1, \theta_2 \in {\mathbb{S}^1}$,

\hspace{0.5cm} $ \displaystyle \left|\int_{\theta_1}^{\theta_2}\left(\tilde{\sigma}^{1-\frac{3p}{p+2}}\right)_{\theta}\,d\theta \right| \leq $ $$ \int_{\mathbb{S}^1}\left|\left(\tilde{\sigma}^{1-\frac{3p}{p+2}}\right)_{\theta}\right|d\theta =\int_{\tilde{\gamma}}\left|\left(\tilde{\sigma}^{1-\frac{3p}{p+2}}\right)_{\tilde{\mathfrak{s}}}\right|d\tilde{\mathfrak{s}}\leq \left(\int_{\tilde{\gamma}}\left(\tilde{\sigma}^{1-\frac{3p}{p+2}}\right)_{\tilde{\mathfrak{s}}}^2d\tilde{\mathfrak{s}}\right)^{1/2}
\tilde{\Omega}_1^{1/2}. $$
Take $\theta_1$ and $\theta_2$ be two points where $\tilde{\sigma}$ reaches its extremal values. It is known that, for a smooth, simple curve with enclosed area $\pi$, $\min_{\mathbb{S}^1}\sigma \leq 1$ and $\max_{\mathbb{S}^1}\sigma \geq 1$, see Lemma 10 in \cite{BA3}. Hence, as $\tilde{\Omega}_1$ is bounded from above by the classical affine isoperimetric inequality \cite{Lutwak}, we infer that $\lim_{t_k\to T}\tilde{\sigma}(t_k)=1.$
\end{proof}

\begin{lemma} \label{lem: ellipsoid app}\cite{SG} Suppose that $K$ is a convex body in $K_{sym}$. Denote the curvature and the support function of $\partial K$ respectively by $\kappa$ and $s$. If, for all $\displaystyle \theta$: $m\le\frac{\kappa}{s^3}(\theta)\le M$ for some positive numbers $m$ and $M$, then there exist two ellipses $\mathcal{E}_{in}$ and $\mathcal{E}_{out}$ such that $\mathcal{E}_{in}\subseteq K\subseteq \mathcal{E}_{out}$ and $$\frac{\kappa}{s^3}({\mathcal{E}_{in}})=M,~~~ \frac{\kappa}{s^3}(\mathcal{E}_{out})=m .$$
\end{lemma}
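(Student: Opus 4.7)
The plan is to construct $\mathcal{E}_{in}$ and $\mathcal{E}_{out}$ as osculating centered ellipses at the extremal points of $\kappa_0:=\kappa/s^3$, and verify containment via a comparison argument based on the hypothesis $m\le\kappa_0\le M$. The starting observation is that $\kappa_0$ is constant along any origin-centered ellipse, with value equal to $\pi^2/\mathrm{area}^2$; hence an ellipse with $\kappa_0=M$ has area exactly $\pi/\sqrt{M}$, and similarly for $m$.

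First I would pick a direction $\theta_0\in\mathbb{S}^1$ where $\kappa_0(K)$ attains its maximum $M$, and construct the unique origin-centered ellipse $\mathcal{E}_{in}$ which is tangent to $\partial K$ at the boundary point $x_K(\theta_0)$ (and, by central symmetry, also at $-x_K(\theta_0)$) and satisfies $\kappa_0(\mathcal{E}_{in})=M$. Uniqueness follows since a centered ellipse depends on three parameters (the entries of a symmetric positive definite $2\times 2$ matrix), while the tangency at $\theta_0$ imposes two conditions and the value of $\kappa_0$ imposes one. Because $\kappa_0(\mathcal{E}_{in})=\kappa_0(K)(\theta_0)$ and the boundaries are tangent, the support functions $s_K$ and $s_{\mathcal{E}_{in}}$ match to second order at $\theta_0$; the extremality $(s^3\mathfrak{r})'(\theta_0)=0$ for $K$ (which holds since $\kappa_0=1/(s^3\mathfrak{r})$ has a maximum there), matched against the identically zero derivative for the ellipse, then forces $s_K'''(\theta_0)=s_{\mathcal{E}_{in}}'''(\theta_0)$, upgrading the match to third order.

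Next I would verify $\mathcal{E}_{in}\subseteq K$, equivalently $w:=s_K-s_{\mathcal{E}_{in}}\ge 0$ on $\mathbb{S}^1$. The inequality $\kappa_0(K)\le M=\kappa_0(\mathcal{E}_{in})$ rewrites as $s_K^3\mathfrak{r}[s_K]\ge s_{\mathcal{E}_{in}}^3\mathfrak{r}[s_{\mathcal{E}_{in}}]$ globally, with equality at $\theta_0$. Expanding the difference (for example via the fundamental theorem of calculus applied to $F[s]:=s^3(s+s'')$ along the segment $s_{\mathcal{E}_{in}}+tw$) converts this into a linear differential inequality of the form $a(\theta)w+b(\theta)w''\ge 0$ with $b>0$. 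Combined with the order-4 vanishing of $w$ at $\theta_0$ and the non-negativity $w^{(4)}(\theta_0)\ge 0$ deduced from $(s_K^3\mathfrak{r})''(\theta_0)\ge 0$ (since $F[s_K]$ attains its minimum $1/M$ at $\theta_0$), a comparison/maximum-principle argument yields $w\ge 0$ everywhere.

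Finally, I would construct $\mathcal{E}_{out}$ analogously at the direction where $\kappa_0(K)$ attains its minimum $m$; alternatively, by polar duality, one applies the above to $K^{\circ}$ (noting $\kappa_0(K^{\circ})=1/\kappa_0(K)$ at corresponding boundary points) to obtain an ellipse $\tilde{\mathcal{E}}\subseteq K^{\circ}$ with $\kappa_0(\tilde{\mathcal{E}})=1/m$, and then sets $\mathcal{E}_{out}:=\tilde{\mathcal{E}}^{\circ}\supseteq K$, a centered ellipse with $\kappa_0=m$. The main obstacle is the global containment step, since a naive maximum-principle argument at a hypothetical interior negative minimum of $w$ only yields $w''>0$, which is consistent with $w<0$ locally. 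Resolving this requires genuine use of the fourth-order vanishing of $w$ at $\theta_0$, together with either a carefully weighted maximum-principle argument or an integral/periodicity argument on $\mathbb{S}^1$ exploiting the explicit form of the coefficients $a$ and $b$.
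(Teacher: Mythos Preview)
Your construction (tangent centered ellipse at the extremal direction of $\kappa_0$) is \emph{different} from the paper's: the paper takes $\mathcal{E}_{in}$ to be the maximum-area (John) ellipse inscribed in $K$, shows that its $\kappa_0$ is at most $M$, and then shrinks it. After using $SL(2)$ to send this John ellipse to the unit disk, the paper exploits the fact that the John ellipse touches $\partial K$ in at least four points, so that two consecutive contact points have polar angles within $\pi/2$ of each other. The comparison is then carried out on the \emph{position vector} in the area parametrization $[\gamma,\dot\gamma]=1$, where $\ddot\gamma=-\kappa_0\gamma$: a standard Sturm comparison against $\ddot\gamma=-\gamma$ shows that if $\sup\kappa_0<1$ the curve would leave the unit square before reaching the next contact point, a contradiction. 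So the paper's argument uses neither the osculating ellipse, nor the support function, nor any fourth-order matching.

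Your approach can in fact be made to work, but not by the linearization you describe: writing $F[s_K]-F[s_{\mathcal{E}_{in}}]\ge0$ as $a(\theta)w+b(\theta)w''\ge0$ leaves you exactly at the impasse you identify, and the fourth-order vanishing of $w$ at $\theta_0$ does not by itself rescue a pointwise maximum-principle argument. The missing idea is a change of dependent variable. After normalizing $\mathcal{E}_{in}$ to the disk of radius $r$ (so $s_{\mathcal{E}_{in}}\equiv r$, $M=r^{-4}$), set $v:=s_K^4-r^4$. A short computation turns $s_K^3(s_K+s_K'')\ge r^4$ into the clean linear inequality
\[
v''+4v \;=\; 4\bigl(s_K^3\mathfrak{r}[s_K]-r^4\bigr)+12\,s_K^2(s_K')^2 \;\ge\;0,
\]
with $v(\theta_0)=v'(\theta_0)=0$ and, by central symmetry, $v(\theta_0+\pi)=v'(\theta_0+\pi)=0$. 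The variation-of-parameters formula $v(\theta)=\tfrac12\int_{\theta_0}^{\theta}\sin\bigl(2(\theta-\tau)\bigr)\,(v''+4v)(\tau)\,d\tau$ then gives $v\ge0$ on $[\theta_0,\theta_0+\tfrac{\pi}{2}]$, and the analogous formula started from $\theta_0+\pi$ gives $v\ge0$ on $[\theta_0+\tfrac{\pi}{2},\theta_0+\pi]$; hence $s_K\ge r$ globally. This is precisely the ``integral/periodicity argument'' you allude to, but it hinges on the substitution $v=s_K^4$, which your proposal does not supply. (Incidentally, once this is in hand one sees that neither the third- nor fourth-order matching of $w$ is actually needed; tangency plus $\pi$-periodicity suffices.)
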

\begin{proof}
We present here the argument for the inner ellipse, the case of the outer one being similar.\\
 Recall that
$$\kappa_0=\frac{[\dot{\gamma}, \ddot{\gamma}]}{[\gamma, \dot{\gamma}]^3},$$
where $t\mapsto \gamma(t)$ is any counter-clockwise parametrization
of the boundary curve. For an ellipse, this is a constant inverse
proportional to the square of its area. So, we have to prove that
the maximum-area ellipse contained in $K$ has $\kappa_0\leq M.$ Let
$\mathcal{E}_{in}$ be the maximum-area ellipse contained in $K$.
Since the problem is centro-affine invariant, we may assume that
$\mathcal{E}_{in}$ is the unit circle. We will prove that $M\ge 1.$
The result will then follow by shrinking the circle
$\mathcal{E}_{in}$ until its centro-affine curvature is exactly $M$
and re-denoting it, for simplicity, the same way.
\\ Consider the points where $\partial K$ touches $\mathcal{E}_{in}$ one easily sees that, there are at least four intersection points between $\partial K$ and $\mathcal{E}_{in}$, otherwise $\mathcal{E}_{in}$ could be made larger. Thus, at least two of the intervals on the circle corresponding to the polar angle of the intersection points are not greater than $\pi/2$. In fact, due to the symmetry of $K$, there exist at least two diametrically opposite such intervals. Choose coordinates so that one of the intersection points is $(1,0)$ and another intersection point is of the form $(\cos\theta,\sin\theta)$ for some $0<\theta\leq \pi/2.$  Observe that the arc of $\partial K$ between these touch points is contained in the square $[0,1] \times [0,1].$ \\
Parameterize $\partial K$ by the spanned area, i.e. by a curve
$p\mapsto \gamma(p)$ such that $[\gamma, \dot{\gamma}]=1$. Therefore we have
$[\gamma, \ddot{\gamma}]=0$, hence
$\ddot{\gamma}(p)=-\kappa_0(p)\gamma(p)$, for all $p$, where
$\kappa_0(p)$ is precisely the centro-affine curvature along the boundary of $K$. Let
$\gamma(p)=(x(p),y(p))$, then $\ddot{x}(p)=-\kappa_0(p)x(p)$ and
$\ddot{y}(p)=-\kappa_0(p)y(p).$ Suppose that $M=\sup
\kappa_0(p)<1.$ Since $x(0)=1$, $\dot{x}(0)=0$, $y(0)=0$ and
$\dot{y}(0)=1,$ a standard comparison theorem for equations of the
form $\ddot{x}=-a^2x$ implies that $x(p)> \cos p$ and $y(p)>\sin p$
for all $p\in(0,\pi/2]$. Therefore, $x(p)^2+y(p)^2>1$ for all
$p\in(0,\pi/2]$. This means that $\gamma$ leaves the square
$[0,1]\times [0,1]$ before it has a chance to touch the circle again, contradicting our assumption.

\end{proof}
\begin{theorem}\label{thm: semi main} Suppose that $\tilde{s}_t$ is a solution of the normalized flow (\ref{e: normalized flow}) for some initial convex body in ${\mathcal{K}}_{sym}$ and that  $\{t_k\}_k$  is the sequence of times realizing the limit (\ref{eq:sup}) in Corollary \ref{cor: limsup idea}.  Then there exist two families of centered ellipses $\{\mathcal{E}_{in}(t_k)\}$, $\{\mathcal{E}_{out}(t_k)\}$ such that \begin{equation}\label{ie: inclusion}
{\mathcal{E}_{in}(t_k)}\subseteq \tilde{K}_{t_k}\subseteq {\mathcal{E}_{out}(t_k)}
\end{equation}
with
\begin{equation}\label{e: limit of difference}
\limsup_{t_k\to T}||s_{\mathcal{E}_{out}(t_k)}-s_{{\mathcal{E}_{in}(t_k)}}||=0.
\end{equation}
Furthermore, the sequence of curves $\partial \tilde{K}_{t_k}$  converge,  in the Hausdorff metric, to the unit circle modulo $SL(2)$.
\end{theorem}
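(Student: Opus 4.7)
The plan is to extract from Corollary \ref{cor: limit of affine support} a uniform pinch $\tilde\kappa/\tilde s^3\to 1$ along the chosen subsequence, feed this into Lemma \ref{lem: ellipsoid app} to sandwich $\tilde{K}_{t_k}$ between two centered ellipses, and then exploit an $SL(2)$ normalization to upgrade the resulting area pinch to Hausdorff convergence to the unit disk.

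First I observe that the proof of Corollary \ref{cor: limit of affine support} actually delivers \emph{uniform} convergence of $\tilde\sigma(\cdot,t_k)$ to $1$: the Cauchy--Schwarz argument there bounds the oscillation of $\tilde\sigma^{1-3p/(p+2)}$, and since the extremal values of $\tilde\sigma$ straddle $1$, both $\min\tilde\sigma$ and $\max\tilde\sigma$ must tend to $1$ along $\{t_k\}$. Equivalently $\tilde\kappa/\tilde s^3\to 1$ uniformly on $\mathbb{S}^1$, so for any $\varepsilon_k\downarrow 0$ one has $1-\varepsilon_k\le \tilde\kappa/\tilde s^3\le 1+\varepsilon_k$ on $\mathbb{S}^1$ for large $k$, and Lemma \ref{lem: ellipsoid app} then produces centered ellipses
$$\mathcal{E}_{in}(t_k)\subseteq \tilde{K}_{t_k}\subseteq \mathcal{E}_{out}(t_k)$$
with constant centro-affine curvatures $M_k=1+\varepsilon_k$ and $m_k=1-\varepsilon_k$ respectively. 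Since every centered ellipse satisfies $\kappa/s^3\equiv \pi^2/\mathrm{Area}^2$, we obtain $\mathrm{Area}(\mathcal{E}_{in}(t_k))=\pi/\sqrt{M_k}$ and $\mathrm{Area}(\mathcal{E}_{out}(t_k))=\pi/\sqrt{m_k}$, both tending to $\pi=\mathrm{Area}(\tilde{K}_{t_k})$.

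Next, for each $k$ I choose $\Phi_k\in SL(2)$ sending $\mathcal{E}_{out}(t_k)$ to a centered round disk $D_k$ of radius $r_k=m_k^{-1/4}\to 1$; such a $\Phi_k$ exists because any centered ellipse of area $A$ can be brought to the disk of area $A$ by a diagonal matrix of determinant $1$. The image $\Phi_k(\mathcal{E}_{in}(t_k))$ is a centered ellipse inscribed in $D_k$ with semi-axes $a_k\ge b_k$ satisfying $a_k b_k=M_k^{-1/2}$ and $a_k\le r_k$. These relations force $b_k\ge M_k^{-1/2}r_k^{-1}\to 1$, and together with $b_k\le a_k\le r_k\to 1$ they yield $a_k,b_k\to 1$. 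Consequently both $\Phi_k(\mathcal{E}_{in}(t_k))$ and $\Phi_k(\mathcal{E}_{out}(t_k))=D_k$ converge in $C^0(\mathbb{S}^1)$ to the support function of the unit disk, establishing $\limsup_{t_k\to T}\|s_{\mathcal{E}_{out}(t_k)}-s_{\mathcal{E}_{in}(t_k)}\|=0$ after the $SL(2)$ normalization. The sandwich $s_{\Phi_k(\mathcal{E}_{in}(t_k))}\le s_{\Phi_k(\tilde{K}_{t_k})}\le s_{\Phi_k(\mathcal{E}_{out}(t_k))}$ then forces $s_{\Phi_k(\tilde{K}_{t_k})}\to 1$ uniformly, which is equivalent to Hausdorff convergence of $\partial(\Phi_k(\tilde{K}_{t_k}))$ to $\mathbb{S}^1$.

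The main subtlety is that the ellipse sandwich from Lemma \ref{lem: ellipsoid app} does not by itself guarantee support-function closeness: a highly eccentric pair of centered ellipses could have area ratio near $1$ yet be far apart in $C^0$. The role of the $SL(2)$ normalization in the third step is precisely to convert the area pinch $\mathrm{Area}(\mathcal{E}_{in})/\mathrm{Area}(\mathcal{E}_{out})\to 1$ into a genuine geometric pinch by placing $\mathcal{E}_{out}$ in standard round form, and this is why the convergence in the theorem is asserted only modulo $SL(2)$.
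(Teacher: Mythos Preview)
Your argument is correct and follows essentially the same route as the paper: invoke Corollary \ref{cor: limit of affine support} to get the uniform pinch $\tilde\kappa/\tilde s^3\to 1$ along $\{t_k\}$, apply Lemma \ref{lem: ellipsoid app} for the ellipse sandwich, then use an $SL(2)$ map rounding $\mathcal{E}_{out}$ to a disk and conclude by the support-function squeeze. Your explicit semi-axis computation for $\Phi_k(\mathcal{E}_{in}(t_k))$ is a slightly more hands-on version of the paper's argument via $SL(2)$-invariance of $\kappa/s^3$ and the area constraint, and your closing remark that \eqref{e: limit of difference} should be read after the $SL(2)$ normalization is a fair observation---the paper's own proof of that display is likewise only meaningful modulo $SL(2)$.
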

\begin{proof}
By Corollary \ref{cor: limit of affine support}, we have
\begin{equation}\label{e:normalized ratio}
\lim_{t_k\to T}
\left(\frac{\tilde{\kappa}}{\tilde{s}^3}\right)(\theta,{t_k})=1.
\end{equation}

Thus, the first half of the claim follows from Lemma  \ref {lem: ellipsoid app}.\\
Now we proceed to prove the second half of the claim.

  Evidently we can find an appropriate family of special linear transformations $\{L_{t_k}\}_{t_k}$ such that $L_{t_k}(\mathcal{E}_{out}({t_k}))$ is a circle at each time ${t_k}$. Each such area preserving linear transformation $L_{t_k}$ minimizes the Euclidean length of the ellipse $\mathcal{E}_{out}({t_k})$ at time $t_k$.

  Thus, the construction of $\mathcal{E}_{out}({t_k}), \mathcal{E}_{in}({t_k})$ implies
  $$\lim_{t_k\to T}L_{t_k}(\mathcal{E}_{out}({t_k}))=\lim_{t_k\to T}L_{t_k}(\mathcal{E}_{in}({t_k}))=\mathbb{S}^1$$
   in the Hausdorff metric:\\ Recall, from
   Lemma \ref{lem: ellipsoid app}, that
   $$\min_{\theta\in \mathbb{S}^1}
\left(\frac{\tilde{\kappa}}{\tilde{s}^3}\right)(\theta,{{t_k}})=\frac{\kappa}{s^3}(\mathcal{E}_{out}({t_k})).$$
Since $\displaystyle \frac{\kappa}{s^3}$ is invariant under $SL(2)$, we have $\displaystyle \frac{\kappa}{s^3}(\mathcal{E}_{out}({t_k}))=\frac{\kappa}{s^3}(L_{t_k}(\mathcal{E}_{out}({t_k})))$, therefore $\displaystyle \lim_{{t_k}\to T}\frac{\kappa}{s^3}(L_{t_k}(\mathcal{E}_{out}({t_k})))=1$. This implies $\lim_{t_k\to T}L_{t_k}(\mathcal{E}_{out}({t_k}))=\mathbb{S}^1$ in the Hausdorff metric.
Similarly, from the choice of $\mathcal{E}_{in}({t_k})$ in Lemma \ref{lem: ellipsoid app}, we have that
  $$\max_{\theta\in \mathbb{S}^1}
\left(\frac{\tilde{\kappa}}{\tilde{s}^3}\right)(\theta,{{t_k}})=\frac{\kappa}{s^3}(\mathcal{E}_{in}({t_k})),$$
 therefore $\displaystyle \lim_{{t_k}\to T}\frac{\kappa}{s^3}(L_{t_k}(\mathcal{E}_{in}({t_k})))=1.$
 This implies $\lim_{{t_k}\to T}A(L_{t_k}(\mathcal{E}_{in}({t_k})))=\pi $. As $L_{t_k}(\mathcal{E}_{in}({t_k}))\subseteq L_{t_k}(\mathcal{E}_{out}({t_k}))$, we conclude that
$\lim_{{t_k}\to T}L_{t_k}(\mathcal{E}_{in}({t_k}))=\mathbb{S}^1$ in the Hausdorff metric.\\
  Now, applying $\{L_{t_k}\}_{t_k}$ to the inclusions (\ref{ie: inclusion}), we obtain that $L_{t_k}(\tilde{K}_{t_k})$ converges to the unit disk in the Hausdorff metric.
\end{proof}

\begin{corollary}\label{cor: lim of volume product}
Along the flow (\ref{e: normalized flow}) with an arbitrary initial condition in $\mathcal{K}_{sym}$,  we have
\begin{align*}
\lim_{t\to T}A(K_t)A(K_t^{\circ})=\pi^2
\end{align*}
for $p>1.$
\end{corollary}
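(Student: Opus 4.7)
The plan is to combine the subsequential convergence of the normalized curves established in Theorem \ref{thm: semi main} with two well-known facts about the area product: its invariance under the rescaling group and its monotonicity along the flow.

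First I would observe that the quantity $A(K)A(K^{\circ})$ is invariant under both positive scalings and $SL(2)$-transformations. Indeed, if $\lambda>0$ and $L\in SL(2)$, then $A(\lambda L K)=\lambda^{2}A(K)$ while $(\lambda L K)^{\circ}=\lambda^{-1}L^{-T}K^{\circ}$, so $A((\lambda L K)^{\circ})=\lambda^{-2}A(K^{\circ})$, and the two factors of $\lambda^{\pm 2}$ cancel. In particular, for the rescaled bodies $\tilde{K}_{t}$ and the special linear maps $L_{t_{k}}$ furnished by Theorem \ref{thm: semi main}, one has
\[
A(K_{t_{k}})\,A(K_{t_{k}}^{\circ})=A(\tilde{K}_{t_{k}})\,A(\tilde{K}_{t_{k}}^{\circ})=A\bigl(L_{t_{k}}\tilde{K}_{t_{k}}\bigr)\,A\bigl((L_{t_{k}}\tilde{K}_{t_{k}})^{\circ}\bigr).
\]

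Next I would pass the subsequential limit through both factors. By Theorem \ref{thm: semi main}, $L_{t_{k}}\tilde{K}_{t_{k}}\to\overline{\mathbb{D}}$ in the Hausdorff metric, sandwiched between two sequences of centered ellipses $L_{t_{k}}\mathcal{E}_{in}(t_{k})$ and $L_{t_{k}}\mathcal{E}_{out}(t_{k})$ that both converge to the unit disk. This sandwich keeps the inradii of $L_{t_{k}}\tilde{K}_{t_{k}}$ bounded away from $0$ (and their circumradii bounded above), so polar duality is continuous in the Hausdorff topology along the sequence, and hence $(L_{t_{k}}\tilde{K}_{t_{k}})^{\circ}\to\overline{\mathbb{D}}^{\circ}=\overline{\mathbb{D}}$ in the Hausdorff metric as well. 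Since area is Hausdorff-continuous on convex bodies, both factors tend to $\pi$, giving $\lim_{t_{k}\to T}A(K_{t_{k}})A(K_{t_{k}}^{\circ})=\pi^{2}$.

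The last step is to upgrade this subsequential limit to a full limit as $t\to T$. Here I would invoke the monotonicity already used in the proof of Proposition \ref{prop: volume product}, namely that $t\mapsto A(t)A^{\circ}(t)$ is non-decreasing along the $p$-affine flow (Proposition 2.2 of \cite{S}), together with the Blaschke--Santal\'o inequality $A(K)A(K^{\circ})\leq\pi^{2}$ valid for centrally symmetric bodies in $\mathbb{R}^{2}$. A bounded monotone function has a unique limit, which must agree with any subsequential limit; thus $\lim_{t\to T}A(K_{t})A(K_{t}^{\circ})=\pi^{2}$.

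The only slightly delicate point is the continuity of the polarity operation at the subsequence, and this is precisely what the inner/outer ellipse sandwich in Theorem \ref{thm: semi main} is designed to handle; everything else reduces to invoking monotonicity, Santal\'o, and the elementary scale/$SL(2)$-invariance of the area product.
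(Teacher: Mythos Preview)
Your proof is correct and follows essentially the same route as the paper: invariance of the area product under $GL(2)$ (hence under rescaling and $SL(2)$), monotonicity along the flow from \cite{S}, the Santal\'o upper bound, and the subsequential convergence of Theorem~\ref{thm: semi main} combine to pin down the full limit. The only difference is expository: you spell out the Hausdorff-continuity of polarity (via the ellipse sandwich) and of area to justify the subsequential value $\pi^{2}$, whereas the paper leaves this step implicit.
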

\begin{proof}
Recall that the area product $A(K_t)A(K_t^{\circ})$ is invariant under the general linear group, $GL(2)$, and increasing along any $p$-flow, unless the  boundaries of the evolving convex bodies are centered ellipses. Moreover, as the convex bodies are centrally symmetric with the center of symmetry at the origin, the  Santal\'o inequality gives  $A(t)A^{\circ}(t)\leq\pi^2$  with equality if and only if the boundary curves are ellipses centered at the origin,  \cite{Santalo}. Consequently, Theorem \ref{thm: semi main} implies the claim.
\end{proof}
Before stating our main theorems let us recall the following frequently used fact in both convex geometry and analysis of PDEs which is due to Fritz John 1948.
\begin{theorem}\label{John}\cite{John}. John's Inclusion. Suppose $K$ is a convex body in $\mathbb{R}^n$ then there is a unique ellipse $\mathcal{E}_J$ of maximal volume contained in $K$. Furthermore, if $K$ is origin symmetric then
$$\mathcal{E}_J\subseteq K\subseteq\sqrt{n}\mathcal{E}_J.$$
\end{theorem}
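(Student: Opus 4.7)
The plan is to establish the three parts of the statement — existence, uniqueness, and the inclusion — in that order, using a mixture of compactness, strict concavity, and a Lagrange/KKT argument that produces John's decomposition of the identity. For existence, I parametrize ellipsoids inside $K$ as $\mathcal{E}(A,b)=\{x:|A^{-1}(x-b)|\leq 1\}$ with $A$ symmetric positive definite and $b\in\mathbb{R}^n$. Since $K$ is bounded, the set of admissible pairs $(A,b)$ having volume bounded below by any fixed contained ellipsoid is compact (the eigenvalues of $A$ stay in a fixed compact subinterval of $(0,\infty)$ and $b$ is bounded), and the continuous volume functional $\mathrm{vol}\,\mathcal{E}(A,b)=\omega_n\det A$ attains its supremum.

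For uniqueness, the key is the strict concavity of $A\mapsto(\det A)^{1/n}$ on the cone of positive definite matrices, coupled with the convexity of the constraint set: if $\mathcal{E}(A_0,b_0),\mathcal{E}(A_1,b_1)\subseteq K$ are two maximizers, a short calculation using the convexity of $K$ shows that $\mathcal{E}\bigl(\tfrac{A_0+A_1}{2},\tfrac{b_0+b_1}{2}\bigr)\subseteq K$ as well, and its volume is strictly larger unless $A_0=A_1$ and $b_0=b_1$. In the origin-symmetric case, applying uniqueness to $\mathcal{E}_J$ and $-\mathcal{E}_J$ (both contained in $K=-K$) forces $b=0$.

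The inclusion is the heart of the argument and the step I expect to be the main obstacle. After an affine normalization, I may assume $\mathcal{E}_J=B$, the Euclidean unit ball, so that in the symmetric case $B$ is the maximal-volume ellipsoid centered at the origin. Optimality of $B$ among ellipsoids in $K$ translates, via Lagrange multipliers on the problem of maximizing $\log\det A$ subject to the supporting inequalities that encode $\mathcal{E}(A,0)\subseteq K$, into John's decomposition: there exist finitely many contact points $u_1,\dots,u_m\in\partial K\cap S^{n-1}$ and positive weights $\lambda_1,\dots,\lambda_m$ with
$$\sum_{i=1}^m\lambda_i\, u_i\otimes u_i = I.$$
Extracting such a finite decomposition from the infinite-dimensional optimality condition is the genuinely nontrivial step, requiring a Carath\'eodory-type reduction inside the cone of symmetric matrices. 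Taking the trace yields $\sum_i\lambda_i=n$.

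To finish, I exploit central symmetry. Each contact point $u_i$ lies on a supporting hyperplane of $K$ with outer normal $u_i$, so $\langle x,u_i\rangle\leq 1$ for every $x\in K$; applying the same inequality to $-x\in K$ gives $\langle x,u_i\rangle^2\leq 1$. The decomposition then delivers
\begin{equation*}
|x|^2=\langle x,Ix\rangle=\sum_{i=1}^m\lambda_i\langle x,u_i\rangle^2\leq\sum_{i=1}^m\lambda_i=n,
\end{equation*}
so $x\in\sqrt{n}\,\mathcal{E}_J$, completing the proof. Existence, uniqueness, and the final trace estimate are comparatively routine; the work is concentrated in passing from optimality of $B$ to the existence of the John decomposition.
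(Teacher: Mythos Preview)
The paper does not prove this theorem at all: it is quoted verbatim as a classical result from \cite{John} and used as a black box (to guarantee compactness of the normalized bodies in the proof of Theorem~\ref{thm: main}). So there is no ``paper's own proof'' to compare against.

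Your sketch is the standard modern proof of John's theorem (essentially Ball's exposition): compactness for existence, Minkowski's determinant inequality for uniqueness of the shape matrix, and the KKT/John decomposition $\sum\lambda_i u_i\otimes u_i=I$ followed by the trace estimate for the $\sqrt{n}$ bound. The argument is correct in outline. One small gap worth flagging: the strict concavity of $A\mapsto(\det A)^{1/n}$ only forces $A_0=A_1$; it does not by itself give $b_0=b_1$, so your phrase ``unless $A_0=A_1$ and $b_0=b_1$'' overstates what that step delivers. In the general (non-symmetric) case one needs an additional argument to pin down the center --- typically one shows that if the same ellipsoid fits at two distinct centers then a slightly larger ellipsoid fits along the segment joining them. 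For the origin-symmetric case relevant to this paper, however, your observation that $-\mathcal{E}_J\subseteq K$ together with uniqueness of the shape forces $b=0$ is exactly right, and that is all the paper needs.
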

 Theorem \ref{John} immediately implies if $K$ is an origin symmetric convex body such that whose volumes is $\omega_n$, volume of the unit ball in $\mathbb{R}^n$, then there is an affine transformation $L$ such that $r^{+}({LK})\leq \sqrt{n}$ and $r_{-}({{LK}})\ge \frac{1}{\sqrt{n}}$, where $r^{+}(LK)$ and $r_{-}({LK})$ is the inner radius and outer radius of $LK$ respectively.
Now, we are ready to prove one of the main theorems:
\begin{theorem}\label{thm: main}
Let $p>1$. Suppose $\tilde{K}_t$ is a solution of the normalized flow (\ref{e: normalized flow}) for some initial convex body in ${\mathcal{K}}_{sym}$. Then there exists a family of special linear transformations  $\{L_t\}_{t\in[0,T)}\subset SL(2)$ such that $L_t(\partial\tilde{K}_t)$ converges to $\mathbb{S}^1$ in the Hausdorff metric. \label{theorem: previous}
\end{theorem}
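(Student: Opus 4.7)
The plan is to upgrade the sequential statement of Theorem~\ref{thm: semi main} to convergence of the full family by combining John's inclusion with the equality case of the Santal\'o inequality that Corollary~\ref{cor: lim of volume product} delivers in the limit $t \to T$.

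First, for each $t \in [0,T)$ I would invoke Theorem~\ref{John}: since $\tilde{K}_t$ is centrally symmetric with $A(\tilde{K}_t) = \pi$, I choose $L_t \in SL(2)$ that maps the John ellipse of $\tilde{K}_t$ onto a centered disk $B_{r_t}$. John's inclusion $\mathcal{E}_J \subseteq \tilde{K}_t \subseteq \sqrt{2}\,\mathcal{E}_J$ together with the area constraint $\pi = A(L_t \tilde{K}_t) \in [\pi r_t^2, 2\pi r_t^2]$ forces $r_t \in [1/\sqrt{2},1]$ and
$$ B_{r_t} \;\subseteq\; L_t \tilde{K}_t \;\subseteq\; B_{\sqrt{2}\,r_t}, $$
so the normalized bodies sit uniformly between two balls of comparable radii.

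Next, I argue subsequentially. Given any sequence $t_n \to T$, the Blaschke selection theorem together with these uniform bounds extracts a subsequence, still denoted $t_n$, along which $L_{t_n}\tilde{K}_{t_n}$ converges in the Hausdorff distance to a convex body $K_\infty$ with $0$ in its interior and $A(K_\infty) = \pi$. Since each $L_t \in SL(2)$ leaves the area product invariant, and since $A(\cdot)\,A(\cdot^{\circ})$ is also scale invariant, Corollary~\ref{cor: lim of volume product} yields $A(L_{t_n}\tilde{K}_{t_n})\,A((L_{t_n}\tilde{K}_{t_n})^{\circ}) \to \pi^2$. Continuity of the polar operation in the Hausdorff metric on bodies with $0$ in the interior then gives $A(K_\infty)\,A(K_\infty^{\circ}) = \pi^2$, and the equality case of the Santal\'o inequality for centrally symmetric planar bodies forces $K_\infty$ to be an ellipse centered at the origin.

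Finally, I use the John normalization itself to pin down $K_\infty$. By construction $\mathcal{E}_J(L_{t_n}\tilde{K}_{t_n}) = B_{r_{t_n}}$, and the John ellipse depends continuously on the body under Hausdorff convergence (the convergents having $0$ uniformly in their interiors), so $\mathcal{E}_J(K_\infty) = B_{r_\infty}$ for some $r_\infty \in [1/\sqrt{2},1]$. The John ellipse of an ellipse being the ellipse itself yields $K_\infty = B_{r_\infty}$, and $A(K_\infty) = \pi$ then forces $r_\infty = 1$, so $K_\infty = \mathbb{S}^1$. Since every subsequential limit coincides with the unit disk, we conclude $L_t \tilde{K}_t \to \mathbb{S}^1$ for the entire family. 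The delicate point will be transferring the Santal\'o equality from the approximating sequence to $K_\infty$; this relies on continuity of polarity, which is furnished precisely by the uniform inner-radius bound $r_{-}(L_t\tilde{K}_t)\geq 1/\sqrt{2}$ coming from John's theorem.
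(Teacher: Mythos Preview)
Your proof is correct and follows essentially the same strategy as the paper: normalize via $SL(2)$ to obtain compactness, use Corollary~\ref{cor: lim of volume product} together with the equality case of the Santal\'o inequality to identify every subsequential Hausdorff limit as an origin-centered ellipse of area $\pi$, and then use the chosen normalization to pin the limit down as the unit circle. The only difference is cosmetic---you normalize by sending the John ellipse to a disk (and invoke continuity of the John ellipse), whereas the paper normalizes by minimizing Euclidean perimeter (and uses that among ellipses of area $\pi$ the length-minimizer is the circle); both choices serve the same purpose in the same overall argument.
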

\begin{proof}
At each time $t$, we apply a special linear transformation $L_t$ such that the Euclidean length of $\partial{\tilde{K}}_t$ is minimized. Let $\{t_i\}_i$ be a sequence of times converging to $T$. John's Inclusion or Proposition 8 of \cite{BA3} implies the compactness of the set of convex bodies $L_{t_i}({\tilde{K}}_{t_i})$. By Corollary \ref{cor: lim of volume product} and Blaschke Selection Theorem,  each subsequence of $L_{t_i}(\partial{\tilde{K}}_{t_i})$ has a subsequence $L_{t_{i_j}}(\partial{\tilde{K}}_{t_{i_j}})$ such that $L_{t_{i_j}}(\partial{\tilde{K}}_{t_{i_j}})$ converges,  in the Hausdorff metric, to an ellipse of enclosed area $\pi$. Thus, the length minimization condition rules out the degeneracy of the limit ellipse and, in fact, it  implies that $L_{t_{i_j}}(\partial{\tilde{K}}_{t_{i_j}})$ converges to the unit circle in the Hausdorff topology.
\end{proof}

\section{Expanding $p$-flow }
\begin{lemma}\label{lem: dual flow}
As $K_t$ evolves by the centro-affine curvature flow (\ref{e: asli}), its dual $K^{\circ}_t$ evolves, up to a diffeomorphism
, under the flow
\begin{equation}\label{e: dual asli 2}
\frac{\partial}{\partial
t}s=s\left(\frac{\kappa}{s^3}\right)^{-\frac{p}{p+2}},\
s(.,t)=s_{\partial K^{\circ}_{t}}(.),\ s(.,0)=s_{\partial K^{\circ}_{0}}(.).
\end{equation}
\end{lemma}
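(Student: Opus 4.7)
The plan is to track the polar-dual point $y(z,t):=z/s_K(z,t)\in\partial K^{\circ}_t$ over time, compute its velocity, and isolate the normal component. For $z=(\cos\theta,\sin\theta)\in\mathbb{S}^1$, let $x=x_{K_t}(z)\in\partial K_t$ be the boundary point with outer unit normal $z$. The outer unit normal to $\partial K^{\circ}_t$ at $y$ is $\tilde z:=x/|x|$, and since $|x|=\rho_{K_t}(\tilde z)=1/s_{K^{\circ}_t}(\tilde z)$ and $\langle x,z\rangle=s_K$, one gets the projection identity
$$\langle z,\tilde z\rangle=s_K\,s_{K^{\circ}},$$
which will be the link between the two support functions.

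Differentiating $y=z/s_K$ in time and substituting the evolution $\partial_t s_K=-s_K(\kappa_K/s_K^3)^{p/(p+2)}$ from (\ref{e: asli}) gives
$$\frac{\partial y}{\partial t}=\frac{1}{s_K}\Bigl(\frac{\kappa_K}{s_K^3}\Bigr)^{\!p/(p+2)}z.$$
Because the tangent to $\partial K^{\circ}_t$ in its Gauss parametrization is purely in the direction $\tilde z^{\perp}$, projecting $\partial_t y$ onto $\tilde z$ recovers the intrinsic normal speed of $\partial K^{\circ}_t$ at $y$, which coincides with $\partial_t s_{K^{\circ}}$ at the Gauss angle $\tilde z$. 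Using the projection identity, this normal speed equals
$$s_{K^{\circ}}\Bigl(\frac{\kappa_K}{s_K^3}\Bigr)^{\!p/(p+2)}.$$
The remaining tangential part of $\partial_t y$ is a time-dependent reparametrization of $\partial K^{\circ}_t$, which is precisely what the phrase ``up to a diffeomorphism'' in the statement absorbs.

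The last ingredient, and the step I expect to be the main technical obstacle, is the centro-affine duality identity
$$\Bigl(\frac{\kappa_K}{s_K^3}\Bigr)(z)\cdot\Bigl(\frac{\kappa_{K^{\circ}}}{s_{K^{\circ}}^3}\Bigr)(\tilde z)=1,$$
expressing that the centro-affine curvature inverts under polarity. I would verify it by a direct computation: parametrize $\gamma^{\circ}(\theta):=z(\theta)/s_K(\theta)$ and apply the invariant formula $\kappa_0=[\dot\gamma,\ddot\gamma]/[\gamma,\dot\gamma]^3$. Using $z'=z^{\perp}$ and $\mathfrak{r}_K=s_K+s_K''$, a short bilinear expansion yields $[\gamma^{\circ},\dot\gamma^{\circ}]=1/s_K^2$ and $[\dot\gamma^{\circ},\ddot\gamma^{\circ}]=\mathfrak{r}_K/s_K^3$, so $\kappa_0^{K^{\circ}}=\mathfrak{r}_K s_K^3=s_K^3/\kappa_K$, and the product with $\kappa_0^K=\kappa_K/s_K^3$ is $1$. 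Inserting this identity into the normal-speed formula replaces $(\kappa_K/s_K^3)^{p/(p+2)}$ by $(\kappa_{K^{\circ}}/s_{K^{\circ}}^3)^{-p/(p+2)}$; finally, using $\kappa^{-p/(p+2)}=\mathfrak{r}^{p/(p+2)}$ rewrites the right-hand side as $s^{1+3p/(p+2)}\mathfrak{r}^{p/(p+2)}$, which is exactly (\ref{e: dual asli 2}).
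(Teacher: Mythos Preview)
Your argument is correct and complete: the pointwise tracking of $y=z/s_K$, the projection identity $\langle z,\tilde z\rangle=s_Ks_{K^{\circ}}$, and the explicit verification of the centro-affine duality $\kappa_0^K\cdot\kappa_0^{K^{\circ}}=1$ all check out, and the tangential remainder is exactly the diffeomorphism the statement allows.

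The paper, however, argues quite differently. Rather than following a boundary point of $K^{\circ}$, it computes the rate of change of the dual area via $A(K^{\circ})=\tfrac12\int_{\mathbb{S}^1}s_K^{-2}\,d\theta$, recognizes the result as a $q$-affine length $\Omega_q(K)$, invokes the known duality $\Omega_q(K)=\Omega_{4/q}(K^{\circ})$ to rewrite this as $\Omega_{-p/(p+1)}(K^{\circ})$, and then reads off the normal speed by comparing with the general first-variation formula $\tfrac{d}{dt}A(L)=\int v\,\mathfrak r_L\,d\theta$. Your approach is more elementary and self-contained: it derives the centro-affine reciprocity directly and yields the pointwise normal velocity without appealing to the $\Omega_q$ duality. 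The paper's approach is shorter and more conceptual, but it leans on that duality identity as a black box and, as written, extracts a pointwise conclusion from an integral identity somewhat informally (the paper itself defers the full details to \cite{S}). Either route is fine; yours has the advantage of being a genuinely complete pointwise proof.
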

\begin{proof}
The proof of Lemma \ref{lem: dual flow} is given in \cite{S}, but, for
completeness, we'll  present it here. Recall that $\displaystyle A(K^{\circ})=\frac{1}{2}\int_{\mathbb{S}^1}\frac{1}{s^2}d\theta$
and that, under sufficient regularity assumptions on $\partial K$ which are satisfied here, $\Omega_{q}(K)=\Omega_{\frac{4}{q}}(K^{\circ})$ for any $q \neq -n$, in which case the $q$-affine length is  not defined. Therefore, as
$K_t$ evolves by the centro-affine curvature flow (\ref{e: asli}),
the volume of the dual body $K^{\circ}_t$ changes by
$$\frac{d}{dt}A^{\circ}(t)=\Omega_{-\frac{p}{p+1}}^{\circ}(t),$$
 where the notation stands for
$\Omega_{-\frac{p}{p+1}}(K^{\circ}_t).$
Compared with the rate of change of the area of a convex body $L$
whose boundary is deformed by a normal vector field with speed $v$,
which is
$\displaystyle \frac{d}{dt}A(L)=\int_{\mathbb{S}^1}v\frac{1}{\kappa_L}d\theta$, we
infer that while $K_t$ evolves, up to a diffeomorphism, by (\ref{e: asli}), its dual
$K^{\circ}_t$ evolves, up to a diffeomorphism, by (\ref{e: dual asli 2}).

\end{proof}
Similar to Propositions \ref{prop: short time existence and uniqueness} and \ref{prop: containment principle} of \cite{S}, we have
\begin{proposition}\label{prop: dual short time existence}
Let $K_0$ be a convex body belonging to $\mathcal{K}_{sym}$ and let
$p\ge 1$. Then, there exists a time $T>0$ for which equation (\ref{e: dual asli 2})
 has a unique  solution starting from $K_0.$
\end{proposition}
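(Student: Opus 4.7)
The cleanest route is to reduce this to Proposition \ref{prop: short time existence and uniqueness} via duality. Given $K_0\in\mathcal{K}_{sym}$ smooth and strictly convex, its polar body $K_0^{\circ}$ is also in $\mathcal{K}_{sym}$ with smooth, strictly convex boundary. By Proposition \ref{prop: short time existence and uniqueness}, there exist $T>0$ and a unique solution $\{\tilde K_t\}_{t\in[0,T)}$ of the contracting flow (\ref{e: asli}) starting from $\tilde K_0 = K_0^{\circ}$, remaining in $\mathcal{K}_{sym}$. By Lemma \ref{lem: dual flow}, the family $\{\tilde K_t^{\circ}\}_{t\in[0,T)}$ evolves (up to a diffeomorphism) by (\ref{e: dual asli 2}), and since $\tilde K_0^{\circ}=(K_0^{\circ})^{\circ}=K_0$, this furnishes the desired short-time solution.

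For uniqueness, suppose $\{L_t^1\}$ and $\{L_t^2\}$ are two solutions of (\ref{e: dual asli 2}) in $\mathcal{K}_{sym}$ starting from $K_0$. Applying Lemma \ref{lem: dual flow} in the reverse direction, $\{(L_t^i)^{\circ}\}$ both solve (\ref{e: asli}) with initial data $K_0^{\circ}$; the uniqueness clause of Proposition \ref{prop: short time existence and uniqueness} then forces $(L_t^1)^{\circ}=(L_t^2)^{\circ}$, hence $L_t^1=L_t^2$.

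Alternatively, one can argue directly. Equation (\ref{e: dual asli 2}) reads, in terms of the support function,
\begin{equation*}
\frac{\partial s}{\partial t}=s^{1+\frac{3p}{p+2}}(s_{\theta\theta}+s)^{\frac{p}{p+2}}=:F(s,s_{\theta\theta}),
\end{equation*}
on $\mathbb{S}^1\times[0,T)$. Since $s_0>0$ and $\mathfrak{r}[s_0]=s_{0,\theta\theta}+s_0>0$ by the smoothness and strict convexity assumptions, the symbol
\begin{equation*}
\frac{\partial F}{\partial s_{\theta\theta}}=\frac{p}{p+2}\, s^{1+\frac{3p}{p+2}}\mathfrak{r}^{\frac{p}{p+2}-1}>0
\end{equation*}
is strictly positive at $t=0$, so the equation is uniformly parabolic in a $C^{2,\alpha}$-neighborhood of $s_0$. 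Standard short-time existence for quasilinear parabolic equations on the compact manifold $\mathbb{S}^1$ (see, e.g., \cite{K}) yields a unique smooth solution on some interval $[0,T)$.

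The only subtle point is ensuring the argument is not circular: Lemma \ref{lem: dual flow} derives (\ref{e: dual asli 2}) as the evolution of $s_{\partial K_t^{\circ}}$ purely from a computation of $\tfrac{d}{dt}A(K_t^{\circ})$ along (\ref{e: asli}), without invoking any existence theorem for the expanding flow; hence the duality reduction is logically sound. I would present the duality proof for its brevity and mention the direct PDE approach as a remark.
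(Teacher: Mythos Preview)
The paper does not actually supply a proof of this proposition; it simply prefaces the statement with ``Similar to Propositions~\ref{prop: short time existence and uniqueness} and~\ref{prop: containment principle} of \cite{S}, we have\ldots'', meaning the result is obtained by the same standard parabolic PDE machinery as in the contracting case. Your direct argument---checking that the linearization satisfies $\partial F/\partial s_{\theta\theta}=\frac{p}{p+2}s^{1+\frac{3p}{p+2}}\mathfrak{r}^{\frac{p}{p+2}-1}>0$ near $s_0$, so that the equation is uniformly parabolic on $\mathbb{S}^1$ and standard theory applies---is precisely what the paper has in mind, and is correct.

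Your duality argument is a genuinely different and attractive route for existence. One caveat on the uniqueness half: Lemma~\ref{lem: dual flow} as stated runs only in the direction contracting $\Rightarrow$ expanding (if $K_t$ solves (\ref{e: asli}) then $K_t^{\circ}$ solves (\ref{e: dual asli 2}) up to diffeomorphism), so invoking it ``in the reverse direction'' is not literally licensed by the text. The reverse implication does hold---the same area-derivative computation in the proof of Lemma~\ref{lem: dual flow} works with the roles of $K$ and $K^{\circ}$ interchanged---but you should say so explicitly, or simply appeal to the direct parabolic uniqueness you already have. Either way both approaches are sound; the duality proof gives a clean geometric link between the two flows, while the direct PDE proof is self-contained and is what the paper implicitly does.
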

\begin{proposition}\label{prop: dual containment principle}
Containment Principle. If $K_{in}$ and $K_{out}$ are the two convex
bodies in $\mathcal{K}_{sym}$ such that $K_{int}\subset K_{out}$,
and $p\ge 1$, then $K_{in}(t)\subseteq K_{out}(t)$ for as long as the
solutions $K_{in}(t)$ and $K_{out}(t)$ (with given initial data
$K_{in}(0)=K_{in}$, $K_{out}(0)=K_{out}$)  of (\ref{e: dual asli 2})
exist in $\mathcal{K}_{sym}.$
\end{proposition}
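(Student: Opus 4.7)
The plan is to deduce this containment principle from its already-established counterpart for the contracting flow, Proposition \ref{prop: containment principle}, via the duality relation codified in Lemma \ref{lem: dual flow}. Inclusion of convex bodies in $\mathcal{K}_{sym}$ is equivalent to a pointwise inequality on support functions, so for symmetric bodies $K_{in}\subset K_{out}$ is equivalent to $s_{\partial K_{in}}\le s_{\partial K_{out}}$, and polarity reverses inclusions, i.e.\ $K_{out}^{\circ}\subseteq K_{in}^{\circ}$. This observation sets up a clean reduction.

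The steps I would carry out are as follows. First, given $K_{in}(0)\subseteq K_{out}(0)$ in $\mathcal{K}_{sym}$, form the polar bodies $L_{in}(0):=K_{in}(0)^{\circ}$ and $L_{out}(0):=K_{out}(0)^{\circ}$, which satisfy $L_{out}(0)\subseteq L_{in}(0)$. Next, evolve each of them by the contracting $p$-centro affine flow (\ref{e: asli}), producing families $L_{in}(t)$ and $L_{out}(t)$ in $\mathcal{K}_{sym}$ (existence in some common time interval is guaranteed by Proposition \ref{prop: short time existence and uniqueness}). By Proposition \ref{prop: containment principle} applied to this pair, $L_{out}(t)\subseteq L_{in}(t)$ for as long as both exist. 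Dualizing this inclusion yields $L_{in}(t)^{\circ}\subseteq L_{out}(t)^{\circ}$. To finish, I would invoke Lemma \ref{lem: dual flow} in the forward direction: since $L_{in}$ and $L_{out}$ solve (\ref{e: asli}), their polars $L_{in}^{\circ}$ and $L_{out}^{\circ}$ solve (\ref{e: dual asli 2}) with initial data $K_{in}(0)$ and $K_{out}(0)$ respectively, and by the uniqueness part of Proposition \ref{prop: dual short time existence} these must agree with $K_{in}(t)$ and $K_{out}(t)$. Combining gives $K_{in}(t)\subseteq K_{out}(t)$ for as long as both flows exist in $\mathcal{K}_{sym}$.

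As a sanity check and backup, I would also outline a direct maximum-principle argument on (\ref{e: dual asli 2}). Setting $\phi:=s_{out}-s_{in}$ with $\phi(\cdot,0)\ge 0$, one examines a putative first touching time $t_0$ and angle $\theta_0$ where $\phi(\theta_0,t_0)=0$. There $\phi_{\theta}=0$ and $\phi_{\theta\theta}\ge 0$, so $s_{out}=s_{in}$ and $\mathfrak{r}[s_{out}]\ge\mathfrak{r}[s_{in}]$ at the point; since $\partial_t s=s^{1+\frac{3p}{p+2}}\mathfrak{r}^{\frac{p}{p+2}}$ is monotone in $\mathfrak{r}$ (its linearization in $s_{\theta\theta}$ has the positive coefficient $\frac{p}{p+2}s^{1+\frac{3p}{p+2}}\mathfrak{r}^{\frac{p}{p+2}-1}$), we obtain $\partial_t\phi(\theta_0,t_0)\ge 0$, contradicting that $t_0$ is a first touching time after a standard perturbation of $\phi$ by $\varepsilon(1+t)$.

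The only delicate point in the duality approach is verifying that the map $K\mapsto K^{\circ}$ intertwines the two flows in both directions, which requires the smoothness and strict convexity of the evolving bodies in $\mathcal{K}_{sym}$ so that polarity is an involution preserved by the evolutions; these regularity hypotheses are exactly those used in the hypotheses of Lemma \ref{lem: dual flow}. The direct maximum principle is slightly more delicate because one must perturb $\phi$ to handle the possibility of a boundary-in-time touching and because the speed involves both $s$ and $\mathfrak{r}$ nonlinearly, so ensuring monotonicity in each argument at a touching point is the step to check carefully. I expect the duality route to be the cleaner one to record.
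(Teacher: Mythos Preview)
Your proposal is correct. Note, however, that the paper does not actually supply a proof of this proposition: it merely records that the statement holds ``similar to Propositions \ref{prop: short time existence and uniqueness} and \ref{prop: containment principle} of \cite{S}'', i.e.\ by the standard comparison arguments for geometric parabolic equations. Your second outline (the direct maximum-principle computation on $\phi=s_{out}-s_{in}$) is exactly that standard route and is presumably what the paper has in mind.

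Your first argument, via duality, is a genuinely different and rather elegant alternative. It trades the PDE comparison for three soft facts already in the paper: polarity reverses inclusion on $\mathcal{K}_{sym}$, Lemma \ref{lem: dual flow} intertwines the two flows, and Proposition \ref{prop: dual short time existence} gives uniqueness. The only care needed is the one you flag: Lemma \ref{lem: dual flow} gives the expanding evolution of $K_t^{\circ}$ only up to a tangential diffeomorphism, so to identify $L_{in}(t)^{\circ}$ with $K_{in}(t)$ you should work at the level of the scalar support-function equations (\ref{e: asli}) and (\ref{e: dual asli 2}), where the reparametrization issue disappears and uniqueness applies directly. With that adjustment the duality proof goes through cleanly and has the advantage of requiring no new maximum-principle computation.
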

Similar to Lemma \ref{lem: symmetry preserved}
we have
\begin{lemma}\label{lem: dual symmetry preserved}Let $\{K_t\}_t$ be a solution of (\ref{e: dual asli 2}) where $K_0\in\mathcal{K}_{sym}.$
Then, $K_t\in\mathcal{K}_{sym}$ as long as the flow exists.
\end{lemma}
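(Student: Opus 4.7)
The plan is to mimic exactly the argument used for Lemma \ref{lem: symmetry preserved}, replacing the short-time existence/uniqueness reference by the corresponding statement for the expanding flow, namely Proposition \ref{prop: dual short time existence}. The underlying principle is the same: central symmetry about the origin is the statement that the support function satisfies $s(\theta+\pi,t)=s(\theta,t)$, and this $\pi$-periodicity is preserved because the parabolic equation (\ref{e: dual asli 2}) has coefficients that depend only on $s$ and its $\theta$-derivatives (through $\mathfrak{r}=s_{\theta\theta}+s$), so it is invariant under the translation $\theta\mapsto\theta+\pi$.

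Concretely, given a solution $s(\theta,t)$ of (\ref{e: dual asli 2}) with $s(\cdot,0)=s_{\partial K_0^{\circ}}$ and $K_0\in\mathcal{K}_{sym}$, first I would set $\tilde s(\theta,t):=s(\theta+\pi,t)$ and observe, by direct substitution, that $\tilde s$ again solves (\ref{e: dual asli 2}): indeed, writing $\mathfrak{r}[\tilde s](\theta)=\tilde s_{\theta\theta}(\theta)+\tilde s(\theta)=\mathfrak{r}[s](\theta+\pi)$, one has
\[
\frac{\partial}{\partial t}\tilde s(\theta,t)=\frac{\partial}{\partial t}s(\theta+\pi,t)=s(\theta+\pi,t)^{1+\frac{3p}{p+2}}\mathfrak{r}[s](\theta+\pi,t)^{\frac{p}{p+2}}=\tilde s(\theta,t)^{1+\frac{3p}{p+2}}\mathfrak{r}[\tilde s](\theta,t)^{\frac{p}{p+2}}.
\]
Next, the hypothesis $K_0\in\mathcal{K}_{sym}$ gives $\tilde s(\theta,0)=s(\theta+\pi,0)=s(\theta,0)$, so $\tilde s$ and $s$ solve the same initial value problem. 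Invoking Proposition \ref{prop: dual short time existence} for uniqueness, we conclude $\tilde s(\theta,t)=s(\theta,t)$ on the whole interval of existence, which is precisely the statement $K_t\in\mathcal{K}_{sym}$.

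I do not anticipate any serious obstacle: the only point that requires a brief check is that the operator on the right-hand side of (\ref{e: dual asli 2}) is genuinely local and translation-invariant in $\theta$, and this is immediate from the formula for $\mathfrak{r}$. The argument is the symmetric twin of Lemma \ref{lem: symmetry preserved}, and could alternatively be phrased in terms of the embedding $x$ by noting that both $x(\cdot,t)$ and $-x(\cdot+\pi,t)$ satisfy (\ref{e: dual flow}) with the same initial datum when $K_0$ is symmetric; uniqueness then forces $x(\theta,t)=-x(\theta+\pi,t)$ for all $t$ within the existence time.
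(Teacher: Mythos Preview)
Your proposal is correct and follows precisely the approach the paper intends: the paper gives no proof for this lemma beyond the remark ``Similar to Lemma~\ref{lem: symmetry preserved},'' and your argument---showing that $s(\theta+\pi,t)$ (equivalently $-x(\theta+\pi,t)$) solves the same initial value problem as $s(\theta,t)$ and then invoking the uniqueness part of Proposition~\ref{prop: dual short time existence}---is exactly the adaptation of the proof of Lemma~\ref{lem: symmetry preserved} that the paper has in mind. One cosmetic point: since in the statement of Lemma~\ref{lem: dual symmetry preserved} the body $K_t$ itself is the solution of the expanding flow, the initial datum should simply be written $s(\cdot,0)=s_{\partial K_0}$ rather than $s_{\partial K_0^{\circ}}$.
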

Combining Proposition \ref{prop: length}, Lemma \ref{lem: dual
flow}, Proportions \ref{prop: dual short time existence} and
\ref{prop: dual  containment principle}  we obtain
\begin{proposition}
Suppose $K_t$ is a family of convex bodies such that it evolves
under the flow
 $$\frac{\partial}{\partial t}s(.,t)=s\left(\frac{\kappa}{s^3}\right)^{-\frac{p}{p+2}}(.,t)$$
with $p\ge 1$.
Then
$$\forall \, \theta: \ \ \ \lim_{t\to T}s(\theta, t)=\infty.$$
\end{proposition}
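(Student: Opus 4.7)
The plan is to leverage the polar duality between the expanding and contracting flows, recorded in Lemma \ref{lem: dual flow}, in order to transfer the collapse results proven for the contracting side to blow-up results on the expanding side. Since polar duality is involutive and Lemma \ref{lem: dual flow} converts a solution of (\ref{e: asli}) into one of (\ref{e: dual asli 2}), the converse direction shows that if $K_t$ evolves by the displayed expanding flow, then $K_t^{\circ}$ evolves by the contracting centro-affine flow (\ref{e: asli}). Lemma \ref{lem: dual symmetry preserved} ensures that $K_t^{\circ}$ remains centrally symmetric with its center of symmetry at the origin, and by uniqueness (Propositions \ref{prop: short time existence and uniqueness} and \ref{prop: dual short time existence}), the maximal existence intervals of the two dual flows coincide on the same finite value $T$.

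I next apply Proposition \ref{prop: length} to the contracting flow driving $K_t^{\circ}$ to obtain $L(\partial K_t^{\circ}) \to 0$ as $t \to T$. A standard convex-geometric estimate---the diameter of a planar convex body is bounded above by half its perimeter---then yields $\operatorname{diam}(K_t^{\circ}) \to 0$. Because $K_t^{\circ}$ is centrally symmetric about the origin, every point of $K_t^{\circ}$ has Euclidean norm at most $\tfrac{1}{2}\operatorname{diam}(K_t^{\circ})$, and therefore the support function of the dual satisfies the uniform bound
\begin{equation*}
s_{K_t^{\circ}}(\theta) \;\leq\; \max_{x \in K_t^{\circ}} |x| \;\leq\; \tfrac{1}{2}\operatorname{diam}(K_t^{\circ}) \;\longrightarrow\; 0,
\end{equation*}
uniformly in $\theta \in \mathbb{S}^1$ as $t\to T$.

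To conclude, I invoke the elementary polar identity $\rho_{K_t}(\theta) = 1/s_{K_t^{\circ}}(\theta)$, where $\rho_{K_t}$ denotes the radial function of $K_t$; the previous step yields $\rho_{K_t}(\theta) \to \infty$ uniformly. Since the origin lies in the interior of $K_t$ by central symmetry, the point $\rho_{K_t}(\theta)\,\theta$ belongs to $K_t$, and testing it against the unit vector $\theta$ produces $s_{K_t}(\theta) \geq \langle \rho_{K_t}(\theta)\theta,\theta\rangle = \rho_{K_t}(\theta)$. Hence $s_{K_t}(\theta) \to \infty$ as $t\to T$ for every $\theta$, which is the claimed assertion.

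The only genuine subtlety is matching the two maximal existence intervals under duality; once this is in place, the argument is a clean translation through Lemma \ref{lem: dual flow} and requires no new PDE analysis. The bulk of the writing should therefore consist of justifying the maximal-time identification and the elementary diameter/support-function comparison, both of which are standard.
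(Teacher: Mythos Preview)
Your proposal is correct and follows essentially the same approach as the paper: the paper's proof is a one-line reference to Proposition \ref{prop: length}, Lemma \ref{lem: dual flow}, and Propositions \ref{prop: dual short time existence}, \ref{prop: dual containment principle}, which is exactly the duality-plus-collapse argument you spell out, with the latter two references handling the maximal-time identification you flag as the only subtlety. Your additional convex-geometric steps (perimeter $\to$ diameter $\to$ support function of the polar $\to$ radial function $\to$ support function) are all standard and merely make explicit what the paper leaves to the reader.
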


\begin{proposition}\label{prop: dual homothetic}
Ellipses centered at the origin are the only homothetic solutions to (\ref{e: dual asli 2}).
\end{proposition}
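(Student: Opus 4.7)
The plan is to mimic the alternate argument at the end of the proof of Proposition \ref{prop: volume product}: I will show that any homothetic solution of (\ref{e: dual asli 2}) must have centro-affine curvature $\kappa/s^3$ constant along its boundary, and then invoke Petty's lemma \cite{petty} to identify the body as an ellipse centered at the origin. An equivalent derivation, more in the spirit of the duality developed in Lemma \ref{lem: dual flow}, proceeds by passing to the polar dual and applying Proposition \ref{prop: volume product} directly; I sketch both.

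For the direct approach, a homothetic solution has the form $s(\theta,t)=\lambda(t)\,s_0(\theta)$ for some positive scalar function $\lambda(t)$, where $s_0$ is the support function of the initial body. The curvature of a planar convex curve scales inversely under a homothety, so $\kappa(\theta,t)=\lambda(t)^{-1}\kappa_0(\theta)$ and therefore $(\kappa/s^3)(\theta,t)=\lambda(t)^{-4}(\kappa_0/s_0^3)(\theta)$. Substituting these into (\ref{e: dual asli 2}) and dividing through by $\lambda(t)^{1+4p/(p+2)}\,s_0(\theta)$ yields
$$\frac{\lambda'(t)}{\lambda(t)^{1+\frac{4p}{p+2}}}=\left(\frac{\kappa_0}{s_0^3}(\theta)\right)^{-\frac{p}{p+2}}.$$
The left-hand side depends only on $t$ and the right-hand side only on $\theta$, so both are equal to a common constant. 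Since $p>0$, this forces $\kappa_0/s_0^3$ to be constant on $\mathbb{S}^1$, which by Petty's lemma is equivalent to $K_0$ being an ellipse centered at the origin; the homothety then propagates this conclusion to every $K_t$.

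For the dual approach: by Lemma \ref{lem: dual flow}, if $\{K_t\}$ solves (\ref{e: dual asli 2}) then, up to diffeomorphism, $\{K_t^{\circ}\}$ solves the contracting flow (\ref{e: asli}), and the elementary identity $(\lambda K)^{\circ}=\lambda^{-1}K^{\circ}$ shows that $\{K_t^{\circ}\}$ is itself homothetic whenever $\{K_t\}$ is. Proposition \ref{prop: volume product} forces $K_t^{\circ}$ to be a centered ellipse, and since the polar of a centered ellipse is again a centered ellipse, so is $K_t$. There is no genuine obstacle: both routes reduce the claim to results already at hand, and the only point requiring a touch of care is the scaling law for curvature under homothety, which is classical.
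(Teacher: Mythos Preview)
Your proposal is correct, and your second (dual) route is exactly the paper's own proof: it invokes the duality of Lemma \ref{lem: dual flow} together with Proposition \ref{prop: volume product}. Your first (direct) computation, reducing a homothetic solution to constancy of $\kappa/s^3$ and then applying Petty's lemma, is also valid and simply parallels the alternate argument already given for Proposition \ref{prop: volume product}; either route suffices.
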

\begin{proof}
The proof follows from the duality between the two flows and Proposition \ref{prop: volume product}.
\end{proof}

Furthermore, we obtain:
\begin{theorem}\label{thm: main dual}
Let $p>1$. Suppose $\tilde{K}_t$ is a solution of the normalized flow derived from (\ref{e: dual asli 2}) for some initial convex body in ${\mathcal{K}}_{sym}$. Then there exists a family of linear transformations  $\{L_t\}_{t\in[0,T)}\subset SL(2)$ such that $L_t(\partial\tilde{K}_t)$ converge to $\mathbb{S}^1$
in the Hausdorff metric.
\end{theorem}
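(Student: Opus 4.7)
The plan is to derive Theorem \ref{thm: main dual} from its contracting counterpart, Theorem \ref{thm: main}, via polar duality. Let $K_t$ denote an unnormalized solution of the expanding flow (\ref{e: dual asli 2}); by Lemma \ref{lem: dual flow}, $t\mapsto K_t^\circ$ is, up to a diffeomorphism, an unnormalized solution of the contracting flow (\ref{e: asli}). Applying Theorem \ref{thm: main} to this dual trajectory produces a family $\{L_t\}\subset SL(2)$ such that the area-normalized transformed dual $L_t\sqrt{\pi/A(K_t^\circ)}\,\partial K_t^\circ$ converges to $\mathbb{S}^1$ in the Hausdorff metric. Furthermore, Theorem \ref{thm: semi main} sandwiches the $L_t$-transformed normalized duals between two ellipses that both tend to the unit disk, pinning their inradii and circumradii uniformly near $T$.

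The second step is to transfer this convergence back to $K_t$ by polar duality. Using the identity $(LK)^\circ=L^{-T}K^\circ$ valid for $L\in SL(2)$, and noting that $L^{-T}\in SL(2)$ as well, set $M_t:=L_t^{-T}$. Polar duality is Hausdorff-continuous on origin-symmetric bodies with uniformly bounded inradius and circumradius, and the unit disk is self-polar, so dualizing both sides of the convergence above yields
$$\sqrt{A(K_t^\circ)/\pi}\;M_t K_t\longrightarrow\mathbb{S}^1\quad\text{in the Hausdorff metric.}$$

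It remains to re-express this in terms of the area-normalized expanding body $\tilde{K}_t=\sqrt{\pi/A(K_t)}\,K_t$. Factoring
$$\sqrt{A(K_t^\circ)/\pi}\,M_t K_t=\sqrt{A(K_t)A(K_t^\circ)/\pi^2}\;M_t\tilde{K}_t,$$
Corollary \ref{cor: lim of volume product} applied to the contracting flow $t\mapsto K_t^\circ$ (the area product being scale-invariant) gives $A(K_t)A(K_t^\circ)\to\pi^2$. Hence the prefactor tends to $1$ and $M_t\,\partial\tilde{K}_t\to\mathbb{S}^1$ in the Hausdorff metric. The one delicate point in this proof is the continuity of polar duality under Hausdorff limits, but the uniform non-degeneracy provided by the sandwiching ellipses of Theorem \ref{thm: semi main} is exactly what is required to make this step rigorous.
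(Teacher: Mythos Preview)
Your proof is correct and follows exactly the paper's route: apply Theorem \ref{thm: main} to the polar bodies, dualize back via $(LK)^\circ=L^{-T}K^\circ$ with $M_t=L_t^{-T}\in SL(2)$, and absorb the scaling discrepancy between $(\widetilde{K_t^\circ})^\circ$ and $\tilde{K}_t$ using Corollary \ref{cor: lim of volume product}; the paper's two-line proof leaves the last step implicit, and you have spelled it out. One small remark: your appeal to Theorem \ref{thm: semi main} for uniform non-degeneracy is misplaced (that result is stated only along a subsequence $\{t_k\}$) and in any case unnecessary---the Hausdorff convergence $L_t\widetilde{K_t^\circ}\to\mathbb{S}^1$ from Theorem \ref{thm: main} already pins the inradii and circumradii near $T$, which is all that is needed for the continuity of polar duality.
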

\begin{proof}
Let $\{L_t\}_{t\in[0,T)}$ be the family of length minimizing special linear transformations that we defined in the proof of Theorem \ref{theorem: previous}. Since $(L_t(\tilde{K}_t))^{\circ}=L_t^{-t}(\tilde{K}_t^{\circ})$, where $L_t^{-t}$ is the inverse transpose of $L_t,$ the claim follows.
\end{proof}
\section{A proof of $p$-affine isoperimetric inequality}
In this section we aim to provide a new proof of $p$-affine isoperimetric inequalities, $p\geq1$, for a convex body $K\in \mathcal{K}_{sym}$. Since our proofs of Theorems \ref{thm: main}, and \ref{thm: main dual} are dependant on $p$-affine isoperimetric inequalities we can't apply our results on $p$-affine flows to obtain $p$-affine isoperimetric inequalities. Instead we employ the affine normal flow to reach our goal, see \cite{BA4}.
\\  We state the following general evolution equation for $\Omega_{l}$ under the contracting $p$-affine flow  for any $l\in \mathbb{R}:$
\begin{equation}\label{e: eveq general}
\frac{d}{dt}\Omega_l(t)=\frac{2(l-2)}{l+2}\int_{\gamma}\sigma^{1-\frac{3p}{p+2}-\frac{3l}{l+2}}d\mathfrak{s}
+\frac{18pl}{(l+2)^2(p+2)}\int_{\gamma}\sigma^{-\frac{3p}{p+2}-\frac{3l}{l+2}}\sigma_{\mathfrak{s}}^2d\mathfrak{s}.
\end{equation}
Proof of this equation is parallel to the one of part four of Lemma \ref{lemma:4parts}.

\begin{lemma}\label{lem: controlling derivative of $l$-affine surface area along affine normal flow}
 The following sharp affine isoperimetric inequalities hold along the affine normal flow. \\
If $1\leq l\leq 2$, then
\begin{align*}
\frac{d}{dt}\Omega_l(t)\geq \frac{l-2}{l+2}\frac{\Omega_l\Omega_1}{A}+\frac{2(l-1)(4l^2+3l+2)}{(l+2)^3}\int_{\gamma}\sigma^{-1-\frac{3l}{l+2}}\sigma_{\mathfrak{s}}^2d\mathfrak{s},
\end{align*}
while, if $l\geq2$, we then have
\begin{align*}
\frac{d}{dt}\Omega_l(t)\geq\frac{l-2}{l+2}\frac{\Omega_l\Omega_1}{A}
+\frac{6l}{(l+2)^2}\int_{\gamma}\sigma^{-1-\frac{3l}{l+2}}\sigma_{\mathfrak{s}}^2d\mathfrak{s},
\end{align*}
\end{lemma}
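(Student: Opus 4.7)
The plan is to specialize (\ref{e: eveq general}) to the affine normal flow $p=1$, reshape the resulting first term into a mixed-volume quantity plus a gradient term using the affine identity $\sigma\mu + \sigma_{\mathfrak{s}\mathfrak{s}} = 1$, and then apply either Chebyshev's integral inequality (for $l \geq 2$) or a sharp mixed Minkowski--Wirtinger estimate (for $1 \leq l \leq 2$). Setting $p=1$ in (\ref{e: eveq general}) gives
\begin{align*}
\frac{d}{dt}\Omega_l(t) = \frac{2(l-2)}{l+2}\int_\gamma \sigma^{-\frac{3l}{l+2}}\, d\mathfrak{s} + \frac{6l}{(l+2)^2}\int_\gamma \sigma^{-1-\frac{3l}{l+2}}\sigma_{\mathfrak{s}}^2\, d\mathfrak{s},
\end{align*}
and writing $\mu = (1-\sigma_{\mathfrak{s}\mathfrak{s}})/\sigma$ followed by one integration by parts (exactly as in the final step of the proof of Theorem \ref{thm: strong affine isoperimetric inequality}) yields the key identity
\begin{align*}
\int_\gamma \sigma^{-\frac{3l}{l+2}}\, d\mathfrak{s} = \int_\gamma \sigma^{1-\frac{3l}{l+2}}\mu\, d\mathfrak{s} + \frac{3l}{l+2}\int_\gamma \sigma^{-1-\frac{3l}{l+2}}\sigma_{\mathfrak{s}}^2\, d\mathfrak{s}.
\end{align*}

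For $l \geq 2$, since $\frac{2(l-2)}{l+2} \geq 0$, it is enough to bound $\int_\gamma \sigma^{-\frac{3l}{l+2}} d\mathfrak{s}$ from below by $\frac{\Omega_l\Omega_1}{2A}$. Because $\sigma$ and $\sigma^{-\frac{3l}{l+2}}$ are oppositely ordered on $\gamma$ (the map $t \mapsto t^{-\frac{3l}{l+2}}$ being strictly decreasing for $l>0$), Chebyshev's integral inequality against the positive measure $d\mathfrak{s}$ together with $\int_\gamma \sigma\, d\mathfrak{s} = 2A$, $\int_\gamma d\mathfrak{s} = \Omega_1$ and $\int_\gamma \sigma^{1-\frac{3l}{l+2}}\, d\mathfrak{s} = \Omega_l$ yields $2A\int_\gamma \sigma^{-\frac{3l}{l+2}}\, d\mathfrak{s} \geq \Omega_1 \Omega_l$. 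The remaining gradient term in the evolution equation is exactly the one in the statement, so the case $l \geq 2$ follows.

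For $1 \leq l \leq 2$ the coefficient $\frac{2(l-2)}{l+2}$ is non-positive, so via the key identity the task reduces to the mixed upper bound
\begin{align*}
V\bigl[s\kappa_0^{l/(l+2)},\, s\kappa_0^{1/3}\bigr] = \int_\gamma \sigma^{1-\frac{3l}{l+2}}\mu\, d\mathfrak{s} \leq \frac{\Omega_l\Omega_1}{2A} + \frac{(l-1)^2}{(l+2)^2}\int_\gamma \sigma^{-1-\frac{3l}{l+2}}\sigma_{\mathfrak{s}}^2\, d\mathfrak{s},
\end{align*}
which is the bilinear analog of (\ref{ie: mixed 2}). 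At $l=1$ the gradient correction vanishes and this is precisely the extended Minkowski inequality for the pair $(s\kappa_0^{1/3},\, s)$, since $s\kappa_0^{1/3} = \kappa^{1/3}$ is itself the support function of a convex body (its radius of curvature equals $\mu/\kappa > 0$). For general $l \in [1,2]$ I would set $h_1 := s\kappa_0^{l/(l+2)}$ and $h_2 := s\kappa_0^{1/3}$, apply the extended Minkowski inequality to $\lambda h_1 + (1-\lambda) h_2$ paired with $s$, and demand the resulting quadratic in $\lambda$ to be non-negative; the discriminant condition then reads
\begin{align*}
\bigl(\Omega_l\Omega_1 - 2A\, V[h_1,h_2]\bigr)^2 \leq \bigl(\Omega_l^2 - 2A\, V[h_1,h_1]\bigr)\bigl(\Omega_1^2 - 2A\, V[h_2,h_2]\bigr),
\end{align*}
after which I would estimate the two defects on the right via the affine-geometric Wirtinger inequality (Lemma 6 of \cite{BA3}). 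Plugging the resulting mixed bound back into the key identity and simplifying via $(l-1)^2 + 3l(l+2) = (2l+1)^2$ and $2(l-2)(2l+1)^2 + 6l(l+2) = 2(l-1)(4l^2+3l+2)$, the coefficient in front of the gradient integral collapses to exactly $\frac{2(l-1)(4l^2+3l+2)}{(l+2)^3}$.

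The main obstacle is establishing this mixed Minkowski--Wirtinger inequality with the sharp constant $\frac{(l-1)^2}{(l+2)^2}$ and the specific $\sigma$-exponent $-1-\frac{3l}{l+2}$: the extended Minkowski inequality as used in the paper only accepts one non-support-function argument at a time, and $s\kappa_0^{l/(l+2)}$ need not be a support function when $l \neq 1$, so one is forced through the discriminant route above and must match the sharp Wirtinger constants carefully in order for the two algebraic identities listed to recover exactly the coefficient claimed in the statement.
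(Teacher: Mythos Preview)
For $l\geq 2$ your argument is correct and is exactly the paper's: the ``generalized H\"older inequality'' of Andrews that the paper invokes (with $g=\sigma$, $F(x)=x^{-3l/(l+2)}$ decreasing, and $d\omega=d\mathfrak{s}$; the paper's ``$d\omega=\sigma\,d\mathfrak{s}$'' is a typo) is precisely Chebyshev's inequality for oppositely ordered functions, and it yields $2A\int_\gamma\sigma^{-3l/(l+2)}\,d\mathfrak{s}\geq\Omega_1\Omega_l$ in one line. The gradient term then carries over unchanged from (\ref{e: eveq general}) with $p=1$.

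For $1\leq l\leq 2$ the paper gives no details either --- it simply points to the method of Theorem~\ref{thm: strong affine isoperimetric inequality} together with Andrews' affine-geometric Wirtinger inequality (Lemma~6 of \cite{BA3}) --- so a comparison of approaches is limited. That said, your discriminant route has a genuine gap beyond the one you flag. The non-negativity of the quadratic form in $\lambda$ does give
\[
(\Omega_l\Omega_1-2A\,V[h_1,h_2])^2\leq(\Omega_l^2-2A\,V[h_1,h_1])(\Omega_1^2-2A\,V[h_2,h_2]),
\]
but to extract an \emph{upper} bound on $V[h_1,h_2]$ of the claimed form you would need \emph{upper} bounds on both defects $\Omega_i^2-2A\,V[h_i,h_i]$ in terms of gradient integrals. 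Minkowski's inequality (and its affine-Wirtinger reformulation) only tells you these defects are non-negative; it does not bound them from above, so the square-root on the right is not controlled by $\int_\gamma\sigma^{-1-3l/(l+2)}\sigma_{\mathfrak{s}}^2\,d\mathfrak{s}$ in any obvious way. In particular, the algebra you record at the end presupposes a bound that the discriminant argument alone does not supply. The paper's intended path is almost certainly a direct, single application of Andrews' Lemma~6 (which in \cite{BA3} is stated in a bilinear form general enough to produce the mixed term $\Omega_l\Omega_1/(2A)$ immediately), rather than a detour through a quadratic in $\lambda$.
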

\begin{proof}
Before presenting a proof of the second claim let us to state the following generalized H\"{o}lder inequality developed by Andrews \cite{BA1}. If $M$ is a compact manifold with a volume form $d\omega,$ $g$ is a continues function on $M$ and $F$ is decreasing real, positive function, then
$$\frac{\int_{M}gF(g)d\omega}{\int_{M}F(g)d\omega}\leq \frac{\int_{M}gd\omega}{\int_{M}d\omega}.$$
If $F$ is strictly decreasing, then equality occurs if and only if $g$ is constant.\\
Define $d\omega=\sigma d\mathfrak{s}$, $g=\sigma$ and $F(x):=x^{-\frac{3l}{l+2}}$. Furthermore, observe that for a convex body $K$ in $\mathbb{R}^2$ we have $2A=\int_{\partial K}\sigma d\mathfrak{s}.$ This implies
$$\int_{\partial K}\sigma^{-\frac{3l}{l+2}}d\mathfrak{s}\geq \frac{\Omega_l\Omega_1}{2A},$$
hence the second claim follows by this last inequality and the evolution equation (\ref{e: eveq general}) for $p=1.$
To prove the first inequality, one can proceed similarly as in the proof of inequality (\ref{e: strong affine isoperimetric inequality1}),
and use the affine-geometric Wirtinger inequality developed by Andrews, Lemma 6, \cite{BA3}.

\end{proof}
\begin{lemma}\label{lem: l-ratio}
Let $l\geq1$ then, the $l$-affine isoperimetric ratio,
$\frac{\Omega_l^{2+l}(t)}{A^{2-l}(t)}$, is non-decreasing along the
 affine normal flow and remains constant if and only if $K_t$ is an origin centered ellipse.
\end{lemma}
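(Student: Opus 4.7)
The plan is to mimic closely the proof of Lemma \ref{lem: p-ratio}, substituting the role played by the general $p$-flow with that of the affine normal flow. Since the affine normal flow is precisely the $p=1$ case of (\ref{e: asli}), the area evolves as $\frac{d}{dt}A(t)=-\Omega_1(t)$ by (\ref{e: volume}). Differentiating the $l$-affine isoperimetric ratio and using this, I would compute
\begin{align*}
\frac{d}{dt}\frac{\Omega_l^{l+2}(t)}{A^{2-l}(t)}
&=\frac{(l+2)\Omega_l^{l+1}A^{2-l}\,\frac{d}{dt}\Omega_l(t)+(2-l)A^{1-l}\Omega_1\,\Omega_l^{l+2}}{A^{2(2-l)}(t)}\\
&=\frac{\Omega_l^{l+1}(t)}{A^{2-l}(t)}\left((l+2)\frac{d}{dt}\Omega_l(t)-(l-2)\frac{\Omega_1(t)\,\Omega_l(t)}{A(t)}\right),
\end{align*}
which is the analogue of the expression obtained in (\ref{e: p-ratio}).

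The second step is to insert the sharp affine isoperimetric inequalities of Lemma \ref{lem: controlling derivative of $l$-affine surface area along affine normal flow}. Whether $1\le l\le 2$ or $l\ge 2$, both inequalities yield
\[
(l+2)\frac{d}{dt}\Omega_l(t)-(l-2)\frac{\Omega_1(t)\Omega_l(t)}{A(t)}\;\ge\; C(l)\int_{\gamma}\sigma^{-1-\frac{3l}{l+2}}\sigma_{\mathfrak{s}}^2\,d\mathfrak{s}\;\ge\;0,
\]
with a non-negative constant $C(l)$ (strictly positive for $l>1$). Combined with the identity above, this immediately gives $\frac{d}{dt}\frac{\Omega_l^{l+2}}{A^{2-l}}\ge 0$, proving the monotonicity claim.

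For the equality case, which I expect to be the only genuinely delicate point, I would distinguish between $l>1$ and $l=1$. When $l>1$, the constant $C(l)$ is strictly positive, so constancy of the ratio forces $\sigma_{\mathfrak{s}}\equiv 0$, i.e.\ the affine support function $\sigma=\kappa_0^{-1/3}$ is constant along $\partial K_t$. Equivalently, the centro-affine curvature $\kappa_0$ is constant, and Petty's lemma (already invoked in the proof of Proposition \ref{prop: volume product}) identifies such centrally symmetric convex bodies with ellipses centered at the origin. When $l=1$ the coefficient $C(l)$ vanishes, so I would instead recover the characterization from the classical affine isoperimetric inequality: constancy of $\Omega_1^3/A$ along the flow means that the equality case of the affine isoperimetric inequality is attained by $K_t$, which, since $K_t\in\mathcal{K}_{sym}$, again forces $K_t$ to be an origin-centered ellipse. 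The converse—that origin-centered ellipses are homothetic solutions and hence preserve the scale-invariant ratio—is already available from Proposition \ref{prop: volume product}.
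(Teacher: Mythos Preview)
Your approach is essentially identical to the paper's: both compute the derivative of the ratio via the quotient rule, use $\frac{d}{dt}A=-\Omega_1$ from (\ref{e: volume}) with $p=1$, and invoke Lemma \ref{lem: controlling derivative of $l$-affine surface area along affine normal flow} to conclude non-negativity. The paper's own proof is terser and does not spell out the equality case at all, so your added discussion is a genuine elaboration rather than a departure.

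One small point on the equality case for $l=1$: the sentence ``constancy of $\Omega_1^3/A$ along the flow means that the equality case of the affine isoperimetric inequality is attained by $K_t$'' is not a self-contained implication---a constant ratio need not equal the extremal value $8\pi^2$ unless you also feed in the convergence of the normalized affine normal flow to an ellipse. A cleaner route, and the one implicit in the paper's structure, is to observe that even when the coefficient $C(1)$ of the gradient term vanishes, equality in the first inequality of Lemma \ref{lem: controlling derivative of $l$-affine surface area along affine normal flow} still requires equality in the underlying Minkowski mixed-volume (equivalently, affine-geometric Wirtinger) inequality used in its proof, and that equality already forces $K_t$ to be a centered ellipse. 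This is exactly the mechanism behind the equality statement in Proposition \ref{thm: second affine inequality}, to which the $l=1$ case reduces.
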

\begin{proof}
\begin{align*}
\frac{d}{dt}\frac{\Omega_l^{2+l}(t)}{A^{2-l}(t)}&=\frac{(2+l)\Omega_l^{l+1}(t)A^{2-l}(t)\frac{d}{dt}\Omega_l+
(2-l)A^{1-l}(t)\Omega_l^{2+l}(t)\Omega_p(t)}{A^{2(2-l)}(t)}
\\ &=\frac{\Omega_l^{l+1}(t)}{A^{2-l}(t)}\left((2+l)\frac{d}{dt}\Omega_l-(l-2)\frac{\Omega_l(t)\Omega_1(t)}{A(t)}\right)\geq
0,
\end{align*}
where we used Lemma \ref{lem: controlling derivative of $l$-affine surface area along affine normal flow} on the last line.
\end{proof}
\begin{theorem}Let $l\geq1$, then the following $l$-affine isoperimetric inequality holds for a convex body $K\in\mathcal{K}_{sym}$.
$$\frac{\Omega_l^{2+l}(K)}{A^{2-l}(K)}\leq 2^{l+2}\pi^{2l},$$
moreover, equality holds if and only if for centered ellipses at origin.
\end{theorem}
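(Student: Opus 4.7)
The plan is to evolve $K$ under the affine normal flow (equation (\ref{e: asli}) with $p=1$) and to combine the monotonicity provided by Lemma \ref{lem: l-ratio} with Andrews' convergence result for the affine normal flow, thereby bypassing Theorems \ref{thm: main} and \ref{thm: main dual} (whose proofs rely on the very inequality we want to establish). The key auxiliary input is Andrews' theorem in \cite{BA4}, which asserts that the normalized affine normal flow, starting from a smooth, strictly convex, origin-symmetric body, converges exponentially fast in $C^\infty$ to a centered ellipse.

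Starting with $K_0 = K \in \mathcal{K}_{sym}$, I would let $\{K_t\}_{t \in [0,T)}$ denote the corresponding affine normal flow. By Lemma \ref{lem: l-ratio} the quantity $t \mapsto \Omega_l^{2+l}(K_t)/A^{2-l}(K_t)$ is non-decreasing on $[0,T)$. The ratio is moreover invariant under dilations: a direct scaling check shows that $\Omega_l$ scales by $\lambda^{(4-2l)/(l+2)}$ and $A$ by $\lambda^2$, so both $\Omega_l^{2+l}$ and $A^{2-l}$ pick up the common factor $\lambda^{4-2l}$. It is also $SL(2)$-invariant, since $A$ is $SL(2)$-invariant and $\Omega_l$ is even centro-affine invariant. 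Composing the flow with area-normalization and with the time-dependent family of $SL(2)$-transformations furnished by Andrews' theorem, I reduce the asymptotic evaluation of the ratio to a computation on the unit disk. For the disk, $s\equiv 1$ and $\mathfrak{r}\equiv 1$, so $\Omega_l = 2\pi$ and $A = \pi$, yielding
$$\lim_{t \to T}\frac{\Omega_l^{2+l}(K_t)}{A^{2-l}(K_t)} = \frac{(2\pi)^{l+2}}{\pi^{2-l}} = 2^{l+2}\pi^{2l}.$$
Combined with monotonicity this gives $\Omega_l^{2+l}(K)/A^{2-l}(K)\leq 2^{l+2}\pi^{2l}$.

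For the equality case, if $K_0$ already saturates the inequality, then by monotonicity the ratio is constant along the entire flow, and the equality clause in Lemma \ref{lem: l-ratio} forces $K_t$ to be an origin-centered ellipse throughout $[0,T)$; in particular $K_0$ itself is such an ellipse, and the converse is immediate. The principal technical obstacle lies in justifying the passage to the limit for $\Omega_l$: this quantity involves both the support function and the curvature, so mere Hausdorff convergence of the normalized bodies does not suffice. It is precisely for this reason that the full $C^\infty$ convergence in Andrews' theorem is essential, since continuity of $\Omega_l$ under $C^\infty$ convergence then makes the limit computation legitimate.
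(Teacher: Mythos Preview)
Your proposal is correct and follows essentially the same approach as the paper: run the affine normal flow, invoke the monotonicity of the $l$-affine isoperimetric ratio from Lemma~\ref{lem: l-ratio}, and use Andrews' convergence of the normalized flow to a centered ellipse to evaluate the limiting ratio. The only minor slip is the citation---the $C^\infty$ convergence of the normalized affine normal flow is established in \cite{BA2}, not \cite{BA4}---and your explicit justification of why $C^\infty$ (rather than merely Hausdorff) convergence is needed for the continuity of $\Omega_l$ is in fact more careful than the paper's terse appeal to ``weak convergence.''
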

\begin{proof}
The claim is an immediate consequence of the weak convergence of the solutions of the normalized affine normal flow to a centered ellipse and Lemma \ref{lem: l-ratio}.
\end{proof}

\textbf{Acknowledgment.}\\
 I would like to immensely thank Alina Stancu for many useful discussions on this work and for her help and advice on various parts of this paper.
 I would like to thank the referee for helpful comments and suggestions.

\bibliographystyle{amsplain}

\begin{thebibliography}{10}
\bibitem{BA5}B. Andrews, \textit{Contraction of convex hypersurfaces in Euclidean space}, Calc. Var. and P.D.E.
\textbf{2}, 151--171, 1994.
\bibitem{BA2}B. Andrews,\textit{Contraction of convex hypersurfaces by their affine normal}, J. Diff.
Geom. \textbf{43}, 207-–230, 1996.
\bibitem{BA1} B. Andrews, \textit{Evolving convex curves},
 Calc. Var. P.D.E. \textbf{7}, 315–-371, 1998.
\bibitem{BA3} B. Andrews, \textit{The affine curve-lengthening flow}, J. Reine Angrew. Math. \textbf{506}, 48--83, 1999.
\bibitem{BA4} B. Andrews,\textit{ Motion of hypersurfaces by Gauss curvature}, Pacific J. Math. \textbf{195}, No.1, 2000.
\bibitem{AST} S. Angenent, G. Sapiro, A. Tannenbaum, \textit{On the heat equation for non-convex curves}, J. AMS. \textbf{11}, No.3, 601--634, 1998.
\bibitem{CL} A. Cianchi, E. Lutwak, D. Yang, G. Zhang, \textit{ Affine Moser-Trudinger and Morrey-Sobolev inequalities}, Calc. Var. Partial
Differential Equations, {\bf{36}},  419–-436, 2009.
\bibitem{SG} S. Ivanov, \textit{Private communication}.
\bibitem{John}F. John, \textit{Extremum problems with inequalities as subsidiary conditions},
187--204 in Studies and essays presented to R. Courant on his 60th birthday (Jan.
8, 1948), Interscience, New York, 1948.
\bibitem{K} N. V. Krylov, \textit{Nonlinear Elliptic and Parabolic Equations of the Second Order}, D. Reidel Publishing Co., Dordrecht, 1987.
\bibitem{LT} J. Loftin, M.P. Tsui, \textit{Ancient solutions of the affine normal flow}, J. Diff. Geom. \textbf{78}, 113--162, 2008.
\bibitem{Ludwig} M. Ludwig and M. Reitzner, \textit{A classification of $SL(n)$ invariant valuations},
Ann. of Math. {\textbf{172}}, 1223--1271, 2010.
\bibitem{Lutwak} E. Lutwak, \textit{The Brunn-Minkowski-Fiery theory II: Affine and geominimal surface areas},
Adv. in Math. \textbf{118}, 244–-294, 1996.
\bibitem{LO} E.
Lutwak, V. Oliker, \textit{On the regularity of solutions to a
generalization of the Minkowski problem}, J. Diff. Geom. \textbf{41}
227--246, 1995.
\bibitem{LYZ1} E. Lutwak, D. Yang, G. Zhang, \textit{Optimal Sobolev norms and the $L^p$ Minkowski
problem}, Int. Math. Res. Not.  Art. ID 62987, 2006.
\bibitem{LYZ2} E. Lutwak, D. Yang, G. Zhang, \textit{Cram\'er-Rao and moment-entropy inequalities for Renyi entropy and
generalized Fisher information}, IEEE Trans. Inform. Theory
{\bf{51}}, 473--478, 2005.
\bibitem{LYZ3} E. Lutwak, D. Yang, G. Zhang, \textit{Sharp affine $L_p$ Sobolev inequalities}, J. Diff. Geom.
{\bf{62}}, 17--38, 2002.
\bibitem{LYZ4} E. Lutwak, D. Yang, G. Zhang, \textit{The Cramer-Rao inequality for star bodies}, Duke Math. J.
{\bf{112}}, 59--81, 2002.
 \bibitem{LYZ5} E. Lutwak, D. Yang, G. Zhang, \textit{$L_p$ affine isoperimetric inequalities},
  J. Diff. Geom. {\bf{56}}, 111--132, 2000.
\bibitem{NS} K. Nomizu, T. Sasaki, \textit{Affine differential geometry}, Cambridge Uni. press, USA, 1994.
\bibitem{petty} C. M. Petty, \textit{ Affine Isoperimetric Problems, Discrete Geometry and Convexity},
 Eds.: J.E. Goodman, E. Lutwak, J. Malkevitch, R. Pollack, Proc.
Conf. New York 1982. Annals New York Acad. Sci. \textbf{440},
113--127, 1985.
\bibitem{Santalo} L. A. Santal\'{o}, \textit{An affine invariant for convex bodies of n-dimensional space},
Portugalia Math. \textbf{8}, 155--161, 1949.
\bibitem{ST2} G. Sapiro and A. Tannenbaum, \textit{Image smoothing based on an affine invariant flow},  in: Proceedings of Conference on Information Sciences and Systems, Johns Hopkins University, March, 1993
\bibitem{ST3} G. Sapiro and A. Tannenbaum, \textit{Affine invariant scale-space}, Internat. J. Comput. Vision \textbf{11}, 25--44, 1993.
\bibitem{ST4} G. Sapiro and A. Tannenbaum, \textit{On affine plane curve evolution}, J. Funct. Anal. \textbf{119}, 79--120, 1994.
\bibitem{ST1}G. Sapiro and A. Tannenbaum, \textit{On invariant curve evolution and image analysis}, Indiana
Univ. Math. J. \textbf{42}, 985--1009,  1993.
\bibitem{schneider} R. Schneider, {\em Convex bodies: The Brunn-Minkowski theory}, Cambridge Univ. Press, New York, 1993.
\bibitem{SW}  C. Sch\"{u}tt and E. Werne, \textit{ The convex floating body},  Math. Scand. \textbf{66}, 275--290, 1990.
  \bibitem{S2} A. Stancu, \textit{The floating body problem}, Bull. London Math. Soc.
  \textbf{38}, 839--846, 2006.
\bibitem{S1} A. Stancu, \textit{Two volume product inequalities and their applications}, CMB \textbf{52}, 464--472, 2004.
\bibitem{S} A. Stancu, \textit{Centro-affine invariants for smooth convex bodies}, Int. Math. Res. Notices. doi: 10.1093/imrn/rnr110, 2011.
\bibitem{TW1}  N. S. Trudinger and X.-J. Wang, \textit{The Bernstein problem for affine maximal hypersurfaces}, Invent. Math. \textbf{140}, 399--422,
2000.
\bibitem{TW2}  N. S. Trudinger and X.-J. Wang, \textit{Boundary regularity for the Monge-Amp\`ere and affine
maximal surface equations}, Ann. of Math. \textbf{167}, 993--1028, 2008.
\bibitem{Tso} K. Tso, \textit{Deforming a hypersurface by its Gauss-Kronecker curvature}, I, Comm. Pure Appl. Math.
\textbf{38}, 867--882, 1985.

\end{thebibliography}

\end{document}